\title{The Rise-Contact involution on Tamari intervals}
\author{Viviane Pons}
\address{LRI, Univ. Paris-Sud - CNRS - Centrale Supelec - Univ. Paris-Saclay}
\email{viviane.pons@lri.fr}
\numberwithin{equation}{section}
\newtheorem{Theorem}{Theorem}[section]
\newtheorem{Proposition}[Theorem]{Proposition}
\newtheorem{Lemma}[Theorem]{Lemma}
\newtheorem{Definition}[Theorem]{Definition}
\newtheorem{Remark}[Theorem]{Remark}
\newtheorem{proofclaim}{Claim}
\newcommand{\NN}{\mathbb{N}}
\newcommand{\Tamk}[1]{\mathcal{T}_{#1}} 
\newcommand{\Tamnm}{\mathcal{T}_{n}^{(m)}} 
\newcommand{\Tam}[2]{\mathcal{T}_{#1}^{(#2)}}
\newcommand{\Tamntm}{\Tamk{n \times m}}
\newcommand{\trprec}{\vartriangleleft} 
\newcommand{\trsucc}{\vartriangleright} 
\newcommand{\trpreceq}{\trianglelefteq} 
\newcommand{\trsucceq}{\trianglerighteq} 
\newcommand{\ntrprec}{\ntriangleleft}
\newcommand\dec{F_{\ge}} 
\newcommand\inc{F_{\le}} 
\DeclareMathOperator{\rel}{rel} 
\DeclareMathOperator{\tree}{tree} 
\newcommand{\pleft}{~\vec{\bullet}~} 
\newcommand{\pright}[1]{~\overleftarrow{\delta_{#1}}~} 
\DeclareMathOperator{\contacts}{c_0} 
\DeclareMathOperator{\rises}{r_0} 
\DeclareMathOperator{\contactsV}{\mathbf{C}} 
\DeclareMathOperator{\risesV}{\mathbf{R}} 
\DeclareMathOperator{\mcontacts}{{c_{(m)}}_0} 
\DeclareMathOperator{\mrises}{{r_{(m)}}_0} 
\DeclareMathOperator{\mcontactsV}{\mathbf{C}_m} 
\DeclareMathOperator{\mrisesV}{\mathbf{R}_m} 
\DeclareMathOperator{\DC}{\mathbf{DC}} 
\DeclareMathOperator{\IC}{\mathbf{IC}} 
\DeclareMathOperator{\contactsP}{\mathcal{C}} 
\DeclareMathOperator{\risesP}{\mathcal{R}} 
\DeclareMathOperator{\mcontactsP}{\mathcal{C}_m} 
\DeclareMathOperator{\mrisesP}{\mathcal{R}_m} 
\DeclareMathOperator{\graftingTree}{\Delta} 
\DeclareMathOperator{\leftbranch}{\phi} 
\DeclareMathOperator{\compl}{\psi} 
\DeclareMathOperator{\risecontact}{\beta} 
\DeclareMathOperator{\mrisecontact}{\beta_m} 
\DeclareMathOperator{\TInv}{TInv} 
\DeclareMathOperator{\labels}{labels} 
\DeclareMathOperator{\DD}{\textbf{D}} 
\DeclareMathOperator{\expand}{expand} 
\DeclareMathOperator{\contract}{contract} 
\newcommand{\contactsStep}[1]{\operatorname{c}_{#1}} 
\newcommand{\risesStep}[1]{\operatorname{r}_{#1}} 
\newcommand{\mcontactsStep}[1]{{\operatorname{c}_{(m)}}_{#1}} 
\newcommand{\mrisesStep}[1]{{\operatorname{r}_{(m)}}_{#1}} 
\newcommand{\dcstep}[1]{\operatorname{dc}_{#1}} 
\newcommand{\icstep}[1]{\operatorname{ic}_{#1}} 
\newcommand{\icinf}{\operatorname{ic}_{\infty}} 
\newcommand{\tI}{\tilde{I}} 
\DeclareMathOperator{\distance}{d} 
\DeclareMathOperator{\size}{size} 
\definecolor{darkGreen}{RGB}{23,103,1}
\newcommand{\red}[1]{\textbf{\textcolor{red}{#1}}}
\newcommand{\blue}[1]{\textcolor{blue}{#1}}
\newcommand{\green}[1]{\textcolor{darkGreen}{#1}}
\tikzstyle{Red} = [color = red]
\tikzstyle{Blue} = [color = blue]
\tikzstyle{Green} = [color = darkGreen]
\tikzstyle{Gray} = [color = gray]
\tikzstyle{Path} = [line width = 1.2]
\tikzstyle{StrongPath} =  [line width=2]
\tikzstyle{DPoint} = [fill, radius=0.1]
\tikzstyle{Line1} = [dashed]
\tikzstyle{Line2} = [dotted, ultra thick]
\tikzstyle{Point} = [fill, radius=0.08]
\tikzstyle{RedPoint} = [color = red, fill, radius=0.08]
\tikzstyle{BluePoint} = [color = blue, fill, radius=0.08]
\tikzstyle{GreenPoint} = [color = darkGreen, fill, radius=0.08]
\tikzstyle{RedPath} = [color = red]
\tikzstyle{BluePath} = [color = blue]
\tikzstyle{GreenPath} = [color = darkGreen]
\tikzstyle{GrayPath} = [color = gray]
\tikzstyle{StrongPath} =  [line width=2.5]
\tikzstyle{StrongPath2} =  [line width=1.5]
\tikzstyle{Leaf} = [color = gray]
\tikzstyle{RedLabel} = [color = red]
\tikzstyle{BlueLabel} = [color = blue]
\tikzstyle{GreenLabel} = [color = darkGreen]
\tikzstyle{Label} = [color = black]
\tikzstyle{Tree} = [draw, isosceles triangle, shape border rotate = 90, isosceles triangle apex angle = 60]
\begin{document}

\maketitle

\begin{abstract}
We describe an involution on Tamari intervals and $m$-Tamari intervals. This involution switches two sets of statistics known as the ``rises'' and the ``contacts'' and so proves an open conjecture from Préville-Ratelle on intervals of the $m$-Tamari lattice.
\end{abstract}

\subsubsection*{Acknowledgements} 
The author would like to thank Darij Grinberg for his time spent understanding interval-posets and the nice and constructive discussions that followed, which had a positive impact on this paper. She also thanks Grégory Châtel and Frédéric Chapoton for the original work on the interval-posets and bijections which later on led to this result.

Finally, the computation and tests needed along the research were done using the open-source mathematical software {\tt SageMath}~\cite{SageMath2017} and its combinatorics features developed by the {\tt Sage-Combinat} community~\cite{SAGE_COMBINAT}. The recent development funded by the OpenDreamKit Horizon 2020 European Research Infrastructures project (\#676541) helped providing the live environment~\cite{JNotebook} which complements this paper.

\section{Introduction}

The Tamari lattice \cite{Tamari1, Tamari2} is a well known lattice on Catalan objects, most frequently described on binary trees, Dyck paths, and triangulations of a polygon. Among its many interesting combinatorial properties, we find the study of its intervals. Indeed, it was shown by Chapoton \cite{Chap} that the number of intervals of the Tamari lattice on objects of size $n$ is given by 

\begin{equation}
\label{eq:intervals-formula}
\frac{2}{n(n +1)} \binom{4 n + 1}{n - 1}.
\end{equation}

This is a surprising result. Indeed, it is not common that we find a closed formula counting intervals in a lattice. For example, there is no such formula to count the intervals of the weak order on permutations. Even more surprising is that this formula also counts the number of simple rooted triangular maps, which led Bernardi and Bonichon to describe a bijection between Tamari intervals and said maps \cite{BijTriangulations}. This is a strong indication that Tamari intervals have deep and interesting combinatorial properties.

One generalization of the Tamari lattice is to describe it on $m$-Catalan objects. This was done by Bergeron and Préville-Ratelle \cite{BergmTamari}. Again, they conjectured that the number of intervals could be counted by a closed formula, which was later proved in \cite{mTamari}:

\begin{equation}
\label{eq:m-intervals-formula}
\frac{m+1}{n(mn +1)} \binom{(m+1)^2 n + m}{n - 1}.
\end{equation} 

In this case, the connection to maps is still an open question. The rich combinatorics of Tamari intervals and their generalizations has led to a surge of effort in their study. This is motivated by their connections with various subjects such as algebra, representation theory, maps, and more.
For example, in~\cite{PRE_RognerudModern}, the author motivates the study of some subfamilies of intervals by connections to operads theory as well as path algebras. Another fundamental example is the work of Begeron and Préville-Ratelle on diagonal harmonic polynomials~\cite{BergmTamari} which has led to the study of $m$-Tamari lattices and more recently generalized Tamari lattices~\cite{RatelleViennot, FangPrevilleRatelle}. The relation to maps, and more specifically Schnyder woods~\cite{BijTriangulations} is a motivation for studying the relation between Tamari intervals and certain types of decorated trees (see for example~\cite{WeylChamber} and~\cite{FangBeta10}). A by-product of our paper is to introduce a new family of trees, the \emph{grafting trees}, which are very close to these decorated trees. In fact, they are in bijection with $(1,1)$ decoration trees of~\cite{CoriSchaefferDescTrees}. 

The goal of the present paper is to prove a certain equi-distribution of statistics on Tamari intervals related to \emph{contacts} and \emph{rises} of the involved Dyck paths. This was first noticed in \cite{mTamari}. At this stage, the equi-distribution could be seen directly on the generating function of the intervals but there was no combinatorial explanation. In his thesis \cite{PRThesis}, Préville-Ratelle developed the subject and left some open problems and conjectures. The one related to the contacts and rises of Tamari intervals is Conjecture~17, which we propose to prove in this paper. It describes an equi-distribution not only between two statistics (as in \cite{mTamari}) but between two sets of statistics. Basically, in \cite{mTamari}, only the initial rise of a Dyck path was considered, whereas in Conjecture~17, Préville-Ratelle considers all positive rises of the Dyck path. Besides, a third statistic is described, the \emph{distance}, which also appears in many other open conjectures and problems of Préville-Ratelle 's thesis: it is related to trivariate diagonal harmonics, which is the original motivation of the $m$-Tamari lattice. According to Préville-Ratelle, Conjecture~17 can be proved both combinatorially\footnote{Gilles Schaeffer says that this derives from a natural involution on maps.} and through the generating function when~$m=1$. But until now, there was no proof of this result when~$m > 1$.

To prove this conjecture, we use some combinatorial objects that we introduced in a previous paper on Tamari intervals \cite{IntervalPosetsInitial}: the interval-posets. They are posets on integers, satisfying some simple local rules, and are in bijections with the Tamari intervals. Besides, their structure includes two planar forests (from the two bounds of the Tamari interval), which are very similar to the Schnyder woods of the triangular planar maps. Another quality of interval-posets is that $m$-Tamari intervals are also in bijection with a sub-family of interval-posets, which was the key to prove the result when~$m > 1$.

Section~\ref{sec:interval-posets} of this paper gives a proper definition of Tamari interval-posets and re-explores the link with the Tamari lattice in the context of our problem. In Section~\ref{sec:statistics}, we describe the \emph{rise}, \emph{contact}, and \emph{distance} statistics and their relations to interval-poset statistics. This allows us to state Theorem~\ref{thm:main-result-classical}, which expresses our version of Conjecture~17 in the case $m=1$. Section~\ref{sec:involutions} is dedicated to the proof of Theorem~\ref{thm:main-result-classical} through an involution on interval-posets described in Theorem~\ref{thm:rise-contact-statistics}. However, the main results of our paper lie in our last section, Section~\ref{sec:mtam}, where we are able to generalize the involution to the $m > 1$ case. Theorem~\ref{thm:main-result-general} is a direct reformulation of Conjecture~17 from \cite{PRThesis}. It is a consequence of Theorem~\ref{thm:m-rise-contact-involution}, which describes an involution on intervals of the $m$-Tamari lattice.

\begin{Remark}
A previous version of this involution was described in an extended abstract~\cite{ME_FPSAC2014}. This was only for the $m=1$ case and did not include the whole set of statistics. Also, in this original description, the fact that it was an involution could be proved but was not clear. We leave it to the curious reader to see that the bijection described in~\cite{ME_FPSAC2014} is indeed the same as the one we are presenting in details now.
\end{Remark}

\begin{Remark}
This paper comes with a complement {\tt SageMath-Jupyter} notebook~\cite{JNotebook} available on {\tt github} and {\tt binder}. This notebook contains {\tt SageMath} code for all computations and algorithms described in the paper. The {\tt binder} system allows the reader to run and edit the notebook online.
\end{Remark}

\section{Tamari Interval-posets}
\label{sec:interval-posets}

\subsection{Definition}

Let us first introduce some notations that we will need further on. In the following, if $P$ is a poset, then we denote by $\trprec_{P}$,
$\trpreceq_{P}$, $\trsucc_{P}$ and $\trsucceq_{P}$ the smaller, smaller-or-equal,
greater and greater-or-equal, respectively, relations of the poset $P$. When
the poset $P$ can be uniquely inferred from the context, we will sometimes
leave out the subscript ``$P$''. We write
\begin{equation}
\rel(P) = \lbrace (x,y) \in P, x \trprec y \rbrace
\end{equation}
for the set of relations of $P$. A relation $(x,y)$ is said to be a \emph{cover relation} if there is no $z$ in $P$ such that $x \trprec z \trprec y$. The Hasse diagram of a poset $P$ is the directed graph formed by the cover relations of the poset. A poset is traditionally represented by its Hasse diagram.

We say that we \emph{add} a relation $(i,j)$ to a poset $P$ when we add $(i,j)$ to $\rel(P)$ along with all relations obtained by transitivity (this requires that neither $i \trprec_P j$ nor $j \trprec_P i$ before the addition). Basically, this means we add an edge to the Hasse Diagram. The new poset $P$ is then an \emph{extension} of the original poset.

We now give a first possible definition of interval-posets.

\begin{Definition}
\label{def:interval-poset}
A \emph{Tamari interval-poset} (simply referred as \emph{interval-poset} in this paper) is a poset $P$ on $\left\{  1,2,...,n\right\}  $ for some $n\in\NN$, such that all triplets $a < b < c$ in $P$
 satisfy the following property, which we call the \emph{Tamari axiom}:

\begin{itemize}
\item $a \trprec c$ implies $b \trprec c$;
\item $c \trprec a$ implies $b \trprec a$.
\end{itemize}
\end{Definition}

Figure \ref{fig:interval-poset-example} shows an example and a counter-example of interval-posets.
The first poset is indeed an interval-poset. The Tamari axiom has to be checked on every $a < b < c$ such 
that there is a relation between $a$ and $c$: we check the axiom on $1 < 2 < 3$ and $3 < 4 < 5$ and it is satisfied.
The second poset of Figure \ref{fig:interval-poset-example} is not an interval poset: it contains 
$1 \trprec 3$ but not $2 \trprec 3$ so the Tamari axiom is not satisfied for $1 < 2 < 3$.

\begin{figure}[ht]
\input{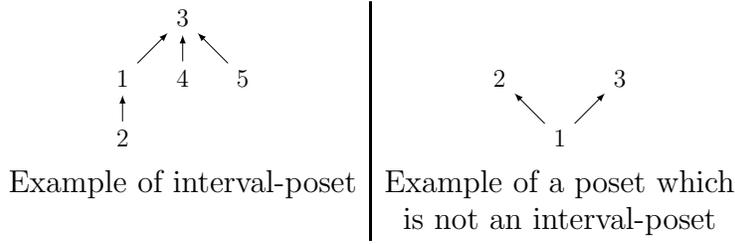}
\caption{Example and counter-example of interval-poset}
\label{fig:interval-poset-example}
\end{figure}

\begin{Definition}
Let $P$ be an interval-poset and $a,b \in P$ such that $a < b$. Then

\begin{itemize}
\item if $a \trprec b$, then $(a,b)$ is said to be an \emph{increasing relation} of $P$.

\item if $b \trprec a$, then $(b,a)$ is said to be a \emph{decreasing relation} of $P$.
\end{itemize}
\end{Definition}

As an example, the increasing relations of the interval-poset of Figure~\ref{fig:interval-poset-example} 
are $(1,3)$ and $(2,3)$ and the decreasing relations are $(2,1)$, $(4,3)$, and $(5,3)$. Clearly a relation $x \trprec y$ 
is always either increasing or decreasing and so one can split the relations of $P$ into two non-intersecting sets. 

\begin{Definition}
Let $P$ be an interval-poset. Then, the \emph{final forest} of $P$, denoted by $\dec(P)$, is the poset formed by the decreasing relations of $P$, \emph{i.e.}, $b \trprec_{\dec(P)} a$ if and only if $(b,a)$ is a decreasing relation of $P$. Similarly,
the \emph{initial forest} of $P$, denoted by $\inc(P)$, is the poset formed by the increasing relations of $P$.
\end{Definition}

By Definition \ref{def:interval-poset} it is immediate that the final and initial forests of an interval-poset are also interval-posets. 
By extension, we say that an interval-poset containing only decreasing (resp. increasing) relations is a final forest (resp. initial
forest). The designation \emph{forest} comes from the result proved in \cite{IntervalPosetsInitial} that an interval-poset containing only increasing (resp. decreasing) relations has indeed the structure of a planar forest, \emph{i.e.}, every vertex in the Hasse diagram has at most one outgoing edge.

The increasing and decreasing relations of an interval-poset play a significant role in the structure and properties of the object. We thus follow the convention described in \cite{IntervalPosetsInitial} to draw interval-posets, which differs from the usual representation of posets through their Hasse diagram. Indeed, each interval-poset is represented with an overlay of the Hasse Diagrams of both its initial and final forests. By convention, an increasing relation $b \trprec c$ with $b < c$ is represented in blue with $c$ on the right of $b$. A decreasing relation $b \trprec a$ with $a < b$ is represented in red with $a$ above $b$. In general a relation (either increasing or decreasing) between two vertices $x \trprec y$ is always represented such that $y$ is on a righter and upper position compared to $x$. Thus, the color code, even though practical, is not essential to read the figures. Figure~\ref{fig:interval-poset-forests} shows the final and initial forests of the interval-poset of Figure~\ref{fig:interval-poset-example}. A more comprehensive example is shown in Figure \ref{fig:interval-poset-example2}. Following our conventions, you can read off, for example, that $3 \trprec 4 \trprec 5$ and that $9 \trprec 8 \trprec 5$.

\begin{figure}[ht]
\begin{center}
\input{figures/interval-poset-forests}
\end{center}
\caption{Final and initial forests of an interval-poset}
\label{fig:interval-poset-forests}
\end{figure}

\begin{figure}[ht]
\begin{center}
\scalebox{0.8}{
\input{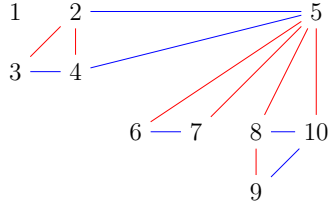}
}
\end{center}
\caption{An example of an interval-poset}
\label{fig:interval-poset-example2}
\end{figure}

We also define some vocabulary on the vertices of the interval-posets related to the initial and final forests.
\begin{Definition}
Let $P$ be an interval-poset. Then
\begin{itemize}
\item $b$ is said to be \emph{a decreasing root} of $P$ if there is no $a < b$ with the decreasing relation $b \trprec a$;
\item $b$ is said to be \emph{an increasing root} of $P$ if there is no $c > b$ with the increasing relation $b \trprec c$;
\item an increasing-cover (resp. decreasing-cover) relation is a cover relation of the initial (resp. final) forest of $P$;
\item the \emph{decreasing children} of $b$ are all elements $c > b$ such that $c \trprec b$ is a decreasing-cover relation;
\item the \emph{increasing children} of $b$ are all elements $a < b$ such that $a \trprec b$ is an increasing-cover relation.
\end{itemize}
\end{Definition}
As an example, in Figure~\ref{fig:interval-poset-example2}: the decreasing roots are $1,2,5$, the increasing roots are $1,5,7,10$, there are 7 decreasing-cover relations (red edges) and 6 increasing-cover relations (blue edges), the decreasing children of 5 are $6, 7, 8, 10$ and its increasing children are 2 and 4.

We also need to refine the notion of extension related to increasing and decreasing relations.

\begin{Definition}
\label{def:interval-poset-extensions}
Let $I$ and $J$ be two interval-posets, we say that
\begin{itemize}
\item $J$ is an \emph{extension} of $I$ if for all $i,j$ in $I$, $i \trprec_I j$ implies $i \trprec_{J} j$;
\item $J$ is a \emph{decreasing-extension} of $I$ if $J$ is an extension of $I$ and for all $i,j$ such that $i \trprec_J j$ and $i \ntrprec_I j$ then $i > j$;
\item $J$ is an \emph{increasing-extension} of $I$ if $J$ is an extension of $I$ and for all $i,j$ such that $i \trprec_J j$ and $i \ntrprec_I j$ then $i < j$;
\end{itemize}
\end{Definition}

In other words, $J$ is an extension of $I$ if it is obtained by adding relations to $I$, it is a decreasing-extension if it is obtained by adding only decreasing relations and it is an increasing-extension if it is obtained by adding only increasing relations.

\begin{Remark}
\label{rem:adding-decreasing}
If you add a decreasing relation $(b,a)$ to an interval-poset $I$, all extra relations that are obtained by transitivity are also decreasing. Indeed, suppose that $J$ is obtained from $I$ by adding the relation $b \trprec a$ with $a < b$ (in particular neither $(a,b)$ nor $(b,a)$ is a relation of $I$). And suppose that the relation $i \trprec_J j$ with $i < j$ is added by transitivity, which means $i \ntrprec_I j$, $i \trpreceq_I b$ and $a \trpreceq_I j$. If $i < a$, the Tamari axiom on $(i,a,b)$ implies $a \trprec_I b$, which contradicts our initial statement. So we have $a < i < j$ and $a \trprec_I j$, the Tamari axiom on $(a,i,j)$ implies $i \trprec_I j$ and again contradicts our statement. Note on the other hand that nothing guarantees that the obtained poset is still an interval-poset.
Similarly, if you add an increasing relation $(a,b)$ to an interval-poset, you obtain an increasing-extension.
\end{Remark}

\subsection{The Tamari lattice}
\label{sec:tamari}

It was shown in \cite{IntervalPosetsInitial} that Tamari interval-posets are in bijection with intervals of the Tamari lattice. The main purpose of this paper is to prove a conjecture of Préville-Ratelle \cite{PRThesis} on Tamari intervals. To do so, we first give a detailed description of the relations between interval-posets and the realizations of the Tamari lattice in terms of trees and Dyck paths. Let us start with some reminder on the Tamari lattice. 

\begin{Definition}
A binary tree is recursively defined by being either
\begin{itemize}
\item the empty tree, denoted by $\emptyset$,
\item a pair of binary trees, respectively called \emph{left} and \emph{right} subtrees, grafted on a node.
\end{itemize}

If $L$ and $R$ are two binary trees, we denote by $\bullet (L,R)$ the binary tree obtained from $L$ and $R$ grafted on a node. 
\end{Definition}


What we call a binary tree is often called a \emph{planar binary tree} in the literature (as the order on the subtrees is important). Note that in our representation of binary trees, we never draw the empty subtrees.

The \emph{size} of a binary tree is defined recursively: the size of the empty tree is $0$, and the size of a tree $\bullet (L, R)$ is the sum of the sizes of $L$ and $R$ plus 1. It is also the number of nodes. For example, the following tree 
\scalebox{0.5}{


\begin{tikzpicture}
\node (N0) at (1.250, 0.000){};
\node (N00) at (0.250, -0.500){};
\node (N001) at (0.750, -1.000){};
\draw[Point] (N001) circle;
\draw (N00.center) -- (N001.center);
\draw[Point] (N00) circle;
\draw (N0.center) -- (N00.center);
\draw[Point] (N0) circle;
\end{tikzpicture}}
has size 3, it is given by the recursive grafting $\bullet ( \bullet(\emptyset, \bullet(\emptyset, \emptyset) ) , \emptyset)$. It is well known that the unlabeled binary trees of size $n$ are counted by the $n^{th}$ Catalan number
\begin{equation}
\frac{1}{n+1}\binom{2n}{n}.
\end{equation}

\begin{Definition}[Standard binary search tree labeling]
Let $T$ be a binary tree of size $n$. The \emph{binary search tree labeling} of $T$ is the unique labeling of $T$ with labels $1, \dots, n$ such that for a node labeled $k$, all nodes on the left subtree of $k$ have labels smaller than $k$ and all nodes on the right subtree of $k$ have labels greater than $k$. An example is given in Figure \ref{fig:bst-example}.
\end{Definition}
\begin{figure}[ht]
\input{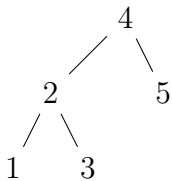}
\caption{A binary search tree labeling}
\label{fig:bst-example}
\end{figure}

In other words, the binary search tree labeling of $T$ is an in-order recursive traversal of $T$: left, root, right. For the rest of the paper, we identify binary trees with their corresponding binary search tree labeling. In particular, we write $v_1, \dots, v_n$ the nodes of $T$: the index of the node corresponds to its label in the binary search tree labeling.

To define the Tamari lattice, we need the following operation on binary trees.

\begin{Definition}
\label{def:tree-rotation}
Let $v_y$ be a node of $T$ with a non-empty left subtree of root $v_x$. The \emph{right rotation} of $T$ on $v_y$ is a local rewriting which follows Figure~\ref{fig:tree-right-rotation}, that is replacing $v_y( v_x(A,B), C)$ by $v_x(A,v_y(B,C))$ (note that $A$, $B$, or $C$ might be empty).

\begin{figure}[ht]
\centering

\input{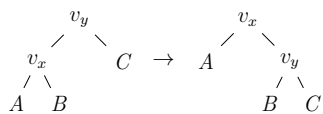}

\caption{Right rotation on a binary tree.}

\label{fig:tree-right-rotation}

\end{figure}
\end{Definition}

It is easy to check that the right rotation preserves the binary search tree labeling. It is the cover relation of the Tamari lattice \cite{Tamari1,Tamari2}: a binary tree $T$ is said to be bigger in the Tamari lattice than a binary tree $T'$ if it can be obtained from $T'$ through a sequence of right rotations. The lattices for the sizes 3 and 4 are given in Figure~\ref{fig:tamari-trees}.

\begin{figure}[ht]
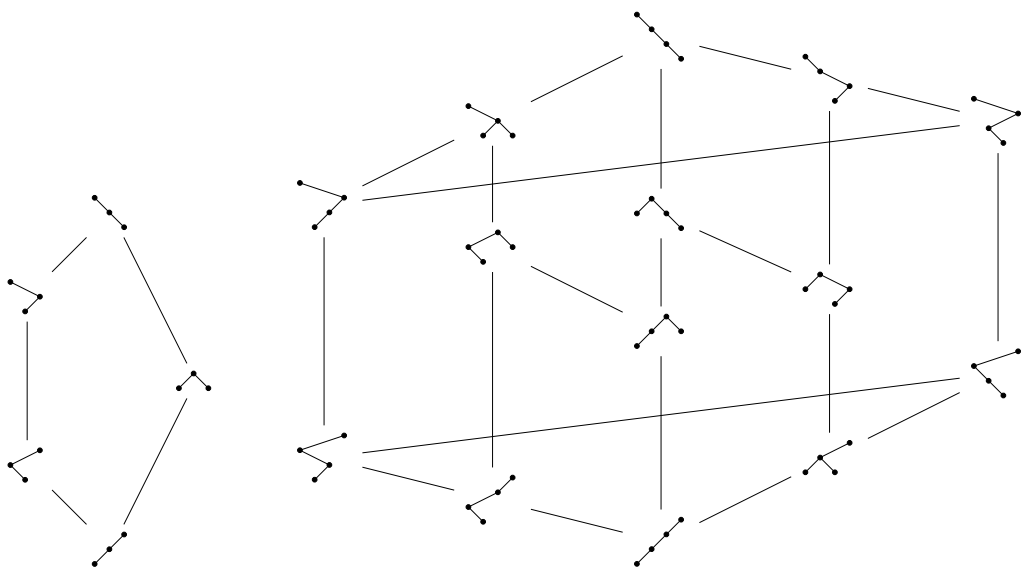

  
\hspace*{-1.5cm}
    \begin{tabular}{cc}
    \scalebox{0.7}{\input{figures/tamari_trees-3}}&
    \scalebox{0.7}{\input{figures/tamari_trees-4}}
    \end{tabular}
    
    \caption{Tamari lattice of sizes 3 and 4 on binary trees.}
    
    \label{fig:tamari-trees}

\end{figure}

Dyck paths are another common set of objects used to define the Tamari lattice. First, we recall their definition.
\begin{Definition}
A \emph{Dyck path} of size $n$ is a lattice path from the origin $(0,0)$ to the point $(2n,0)$ made from a sequence of \emph{up-steps} (steps of the form $(x, y) \to (x+1, y+1)$) and \emph{down-steps} (steps of the form $(x, y) \to (x+1, y-1)$) such that the path stays above the line $y=0$. 
\end{Definition}
A Dyck path can also be considered as a binary word by replacing up-steps by the letter $1$ and down-steps by $0$. We call a Dyck path \emph{primitive} if it only touches the line $y=0$ on its end points. As widely known, Dyck paths are also counted by the Catalan numbers. 
There are many ways to define a bijection between Dyck paths and binary trees. The one we use here is the only one which is consistent with the usual definition of the Tamari order on Dyck paths. 

\begin{Definition}
\label{def:dyck-tree}
We define the $\tree$ map from the set of all Dyck paths to the set of binary trees recursively. Let $D$ be a Dyck path. 
\begin{itemize}
\item If $D$ is empty, then $\tree(D)$ is the empty binary tree.
\item If $D$ is of size $n > 0$, then the binary word of $D$ can be written uniquely as $D_1 1 D_2 0$ where $D_1$ and $D_2$ are Dyck paths 
of size smaller than $n$ (in particular, they can be empty paths). Then $\tree(D)$ is the tree $\bullet (\tree(D_1), \tree(D_2))$.
\end{itemize}

\end{Definition}

Note that the path defined by $1D_2 0$ is primitive; it is the only non-empty right factor of the binary word of $D$ which is a primitive Dyck path.  Similarly, the subpath $D_1$ corresponds to the left factor of $D$ up to the last touching point before the end. Consequently, if $D$ is primitive, then $D = 1D_2 0$, while $D_1$ is empty and thus $\tree(D)$ is a binary tree whose left subtree is empty. If both $D_1$ and $D_2$ are empty, then $D =10$, the only Dyck path of size $1$, and $\tree(D)$ is the binary tree formed by a single node.

The $\tree$ map is a bijection and preserves the size as it is illustrated in Figure~\ref{fig:dyck-tree}.
\begin{figure}[ht]
\centering
\scalebox{0.8}{
\input{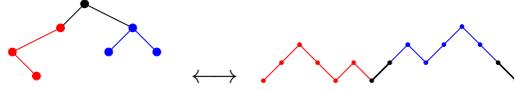}
}
    \caption{Bijection between Dyck paths and binary trees.}
    
    \label{fig:dyck-tree}
    
\end{figure}

Following this bijection, one can check that the right rotation on binary trees corresponds to the following operation on Dyck paths.
\begin{Definition}
 A \emph{right rotation} of a Dyck path $D$ consists of switching a down step $d$ followed by an up step with the primitive Dyck path starting right after $d$. (See Figure \ref{fig:rot-dyck}.)

\begin{figure}[ht]
\scalebox{0.8}{
\input{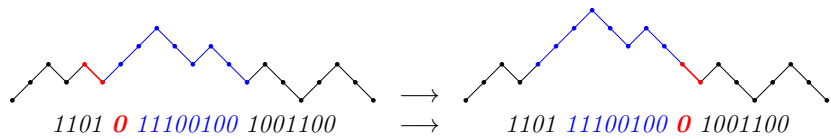}
}
\caption{Rotation on Dyck Paths.}
\label{fig:rot-dyck}
\end{figure}
\end{Definition}

By extension, we then say that a Dyck path $D$ is bigger than a Dyck path $D'$ in the Tamari lattice if it can be obtained from $D'$ through a series of right rotations. The Tamari lattices of sizes 3 and 4 in terms of Dyck paths are given in Figure~\ref{fig:tamari-dyck}.

\begin{figure}[ht]
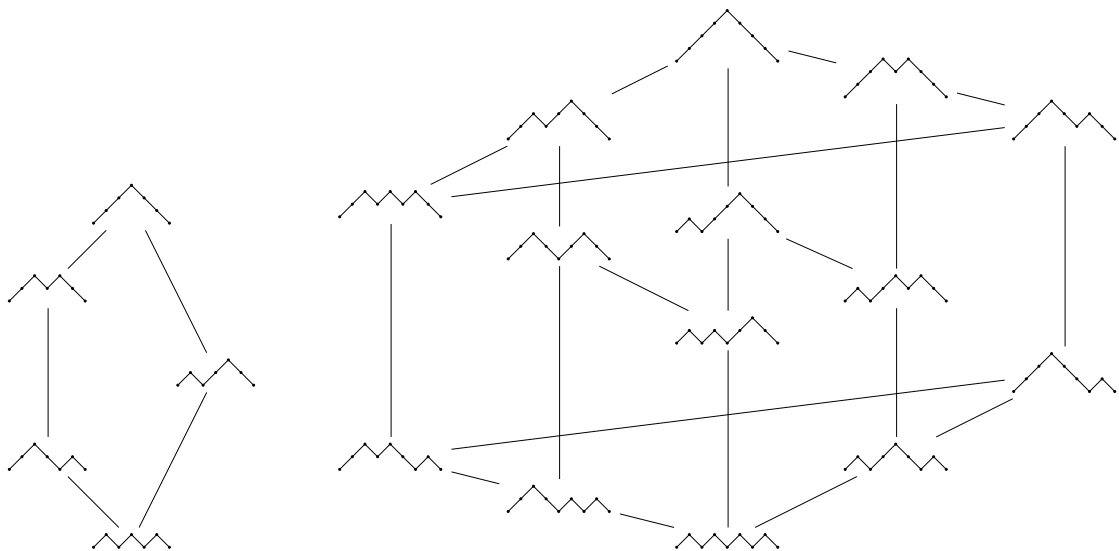

  
\hspace*{-1.5cm}
    \begin{tabular}{cc}
    \scalebox{0.7}{\input{figures/tamari_dyck-3}}&
    \scalebox{0.7}{\input{figures/tamari_dyck-4}}
    \end{tabular}
    
    \caption{Tamari lattices of sizes 3 and 4 on Dyck paths.}
    
    \label{fig:tamari-dyck}

\end{figure}

\subsection{Planar forests}

The bijection between interval-posets and intervals of the Tamari lattice uses a classical bijection between binary trees and planar forests. 


\begin{Definition}
Let $T$ be a binary tree of size $n$ and $v_1, \dots v_n$ its nodes taken in in-order as to follow the binary search tree labeling of~$T$. 

The final forest of $T$, $\dec(T)$ is the poset on $\left\{ 1, \dots, n \right\}$ whose relations are defined as follows: $b \trprec a$ if and only if $v_b$ is in the right subtree of~$v_a$. (Thus, $b \trprec a$ implies $b > a$.)

Similarly, the initial forest of $T$, $\inc(T)$, is the poset on $\left\{ 1, \dots, n \right\}$ whose relations are defined as follows: $a \trprec b$ if and only if $v_a$ is in the left subtree of $v_b$. (Thus, $a \trprec b$ implies $b > a$.)
\end{Definition}

\begin{figure}[ht]
\input{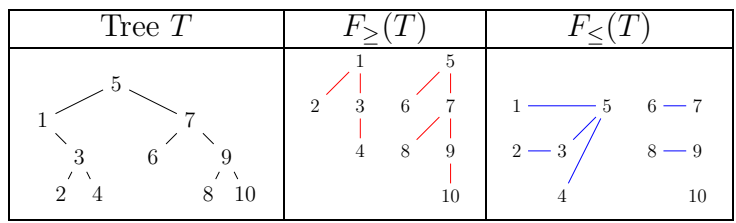}
\caption{A binary tree with its corresponding final and initial forests.}
\label{fig:forests}
\end{figure}

An example of the construction is given in Figure~\ref{fig:forests}. As explained in~\cite{IntervalPosetsInitial}, both the initial and the final forest constructions give bijections between binary trees and planar forests, \emph{i.e.}, forests of trees where the order on the trees is fixed as well as the orders of the subtrees of each node. Indeed, we first notice that the labeling on both images $\dec(T)$ and $\inc(T)$ is entirely canonical (such as the labeling on the binary tree) and can be retrieved by only fixing the order in which to read the trees and subtrees. Then these are actually well known bijections. The one giving the final forest is often referred to as ``left child = left brother'' because it can be achieved directly on the unlabeled binary tree by transforming every left child node into a left brother and by leaving the right child nodes as sons. Thus in Figure \ref{fig:forests}, 2 is the left child of 3 in $T$ and it becomes the left brother of 3 in $\dec(T)$, 9 is a right child of 7 in $T$ and it stays the right-most child of 7 in $\dec(T)$. The increasing forest construction is then the ``right child = right brother'' bijection.

Also, the initial and final forests of a binary tree $T$ are indeed initial and final forests in the sense of interval-posets. In particular, they are interval-posets. The fact that they contain only increasing (resp. decreasing) relations is given by construction. It is left to check that they satisfy the Tamari axiom on all their elements: this is due to the binary search tree structure. In particular,  if you interpret a binary search tree as poset by pointing all edges toward the root then it is an interval-poset.  

\begin{Theorem}[from \cite{IntervalPosetsInitial} Thm 2.8]
Let $T_1$ and $T_2$ be two binary trees and $R = \rel(\dec(T_1)) \cup \rel(\inc(T_2))$. Then, $R$ is the set of relations of a poset $P$ if and only if $T_1 \leq T_2$ in the Tamari lattice. And in this case, $P$ is an interval-poset. 

This construction defines a bijection between interval-posets and intervals of the Tamari lattice.
\end{Theorem}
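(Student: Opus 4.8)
The plan is to funnel everything through a comparison of forests: I will show that $R$ is the set of relations of a poset precisely when $\rel(\dec(T_1)) \subseteq \rel(\dec(T_2))$, and that this inclusion characterizes the Tamari order. First I would compute the effect of a single right rotation $v_y(v_x(A,B),C) \mapsto v_x(A,v_y(B,C))$ on the two forests. With the in-order labels ordered as $A < x < B < y < C$, one checks directly that passing from $T_1$ to $T_2$ adds to the final forest exactly the decreasing relations $(y,x)$ and the $(c,x)$ for $c \in C$, and removes from the initial forest exactly the increasing relations $(x,y)$ and the $(a,y)$ for $a \in A$. Hence a rotation only enlarges $\dec$ and only shrinks $\inc$, and inducting along a chain of rotations gives $T_1 \le T_2 \Rightarrow \rel(\dec(T_1)) \subseteq \rel(\dec(T_2))$ and $\rel(\inc(T_2)) \subseteq \rel(\inc(T_1))$. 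The reverse implication $\rel(\dec(T_1)) \subseteq \rel(\dec(T_2)) \Rightarrow T_1 \le T_2$ I would obtain by an exchange argument: if the inclusion is strict then $T_1$ admits an applicable right rotation whose new decreasing relations still lie in $\dec(T_2)$, which lets one induct on the number of missing relations. (This equivalence between the rotation order and forest inclusion is classical and could alternatively be cited.)

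For the forward direction of the main equivalence, assume $T_1 \le T_2$ and write $D = \rel(\dec(T_1))$ and $I = \rel(\inc(T_2))$, so $R = D \cup I$, with $D$ consisting of the relations $u \trprec w$ with $u > w$ and $I$ of those with $u < w$. Antisymmetry is free: by monotonicity $R \subseteq \rel(\dec(T_2)) \cup \rel(\inc(T_2))$, which is the ancestor order of $T_2$ and hence acyclic. The work is transitivity of $R$, checked on a chain $x \trprec y \trprec z$ by cases. If both steps are decreasing, or both increasing, we stay inside the transitive $D$ (resp. $I$). If $x \trprec y$ is decreasing and $y \trprec z$ increasing, I transport the first step into $T_2$ using $D \subseteq \rel(\dec(T_2))$: there $v_x$ is in the right subtree of $v_y$ and $v_y$ in the left subtree of $v_z$, so $v_x$ is in the left subtree of $v_z$ and $(x,z) \in I$. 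The opposite mixed case is symmetric, transporting the increasing step into $T_1$ via $I \subseteq \rel(\inc(T_1))$ to land in $D$. Thus $R = \rel(P)$ for a poset $P$. The Tamari axiom is then automatic from the binary-search-tree structure: if $a < b < c$ and $a \trprec c$ (necessarily increasing), then $v_a$ lies in the left subtree of $v_c$ in $T_2$, whose labels form an interval ending at $c-1$; since $a$ already lies in it and $a<b<c$, so does $b$, giving $b \trprec c$. The case $c \trprec a$ is the mirror argument inside $T_1$. Hence $P$ is an interval-poset, and by construction $\dec(P) = \dec(T_1)$ and $\inc(P) = \inc(T_2)$.

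For the converse, assume $R$ is the relation set of a poset and let $(x,y) \in \rel(\dec(T_1))$ with $x > y$; I will show $(x,y) \in \rel(\dec(T_2))$, which with the forest characterization yields $T_1 \le T_2$. Suppose not. Comparing $v_x$ and $v_y$ in $T_2$, since $x>y$ there remain only two possibilities. Either $v_y$ is in the left subtree of $v_x$, so $(y,x) \in I$ and $R$ contains the $2$-cycle $x \trprec y \trprec x$. Or $v_x, v_y$ are incomparable in $T_2$ with least common ancestor $v_z$ satisfying $y < z < x$, where $v_y$ lies in its left subtree and $v_x$ in its right; then $(y,z) \in I$, while $z$ lies in the right-subtree interval of $v_y$ in $T_1$, so $(z,y) \in D$, giving the $2$-cycle $y \trprec z \trprec y$. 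Both contradict antisymmetry, so the inclusion holds and $T_1 \le T_2$.

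The bijection now follows quickly. Injectivity holds because $P$ determines its decreasing and increasing relations, namely $\dec(T_1)$ and $\inc(T_2)$, and the tree-to-forest maps $\dec$ and $\inc$ are bijections, recovering $(T_1,T_2)$. For surjectivity, given any interval-poset $P$, take $T_1$ to be the tree with $\dec(T_1)$ equal to the final forest of $P$ and $T_2$ the tree with $\inc(T_2)$ equal to the initial forest of $P$ (both unique by those bijections); then $R = \rel(P)$ is a poset, so the converse gives $T_1 \le T_2$ and the interval $[T_1,T_2]$ maps to $P$. I expect the genuine obstacle to be the reverse half of the forest characterization — that forest inclusion already forces comparability in the rotation order — which requires the careful rotation-selection/exchange argument above; by contrast the mixed-case transitivity and the $2$-cycle constructions are routine once each relation is transported into whichever of $T_1$ or $T_2$ renders it a direct subtree containment.
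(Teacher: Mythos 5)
Your proposal cannot be compared against a proof in this paper for a simple reason: the paper does not prove this statement. It is imported verbatim from \cite{IntervalPosetsInitial} (Theorem 2.8), and all the paper offers is the two-sentence gloss that non-transitivity of $R$ ``never happens'' because of the forest structure, while anti-symmetry fails exactly when $T_1 \not\leq T_2$. Your reconstruction follows that outline, and is in fact more careful than the gloss: transitivity of $R$ is \emph{not} automatic for arbitrary pairs of trees. For example, take $T_1$ with root $1$, right child $3$, and $2$ as left child of $3$, and $T_2$ with root $2$, left child $1$, right child $3$; then $\rel(\dec(T_1)) = \{(2,1),(3,1)\}$ and $\rel(\inc(T_2)) = \{(1,2)\}$, so $(3,1)$ and $(1,2)$ lie in $R$ but $(3,2)$ does not. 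You correctly prove transitivity only under the hypothesis $T_1 \leq T_2$, by transporting each mixed chain into whichever tree makes both steps subtree containments, and that transport is exactly where the forest inclusions $\rel(\dec(T_1)) \subseteq \rel(\dec(T_2))$ and $\rel(\inc(T_2)) \subseteq \rel(\inc(T_1))$ earn their keep. Your rotation bookkeeping (added decreasing relations $(y,x)$, $(c,x)$ for $c \in C$; removed increasing relations $(x,y)$, $(a,y)$ for $a \in A$), the two $2$-cycle constructions in the converse, the interval argument for the Tamari axiom, and the reduction of the bijection claim to the tree/planar-forest bijections all check out.

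The one step you do not actually prove is the backward half of your key lemma: that $\rel(\dec(T_1)) \subseteq \rel(\dec(T_2))$ forces $T_1 \leq T_2$ in the rotation order. You flag this yourself and sketch it as an exchange argument (``if the inclusion is strict, some rotation of $T_1$ keeps all new decreasing relations inside $\rel(\dec(T_2))$''). The claim is true and genuinely classical — since $\rel(\dec(T))$ is determined by the vector of right-subtree sizes, this is the Huang--Tamari characterization of the Tamari order by componentwise comparison of bracketing vectors — so citing it is a legitimate way to close the argument, on a par with the paper's own reliance on \cite{IntervalPosetsInitial}. But be aware that all the remaining mathematical content of the theorem is concentrated there: the choice of which node to rotate, and the verification that the newly created relations stay inside $\rel(\dec(T_2))$, require a genuine argument (choosing a maximal or minimal violating node and analyzing where the next label after its right subtree sits), and until that is written out or a reference is given, your proof of the ``$R$ poset $\Rightarrow T_1 \leq T_2$'' direction, which invokes this lemma after producing the forest inclusion, is complete only modulo citation.
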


There are two ways in which $R$ could be not defining a poset. First, $R$ could be non-transitive. Because of the structure of initial and final forests, this never happens. Secondly, $R$ could be non-anti-symmetric by containing both $(a,b)$ and $(b,a)$ for some $a,b \leq n$. This happens if and only if $T_1 \not\leq T_2$. You can read more about this bijection in \cite{IntervalPosetsInitial}. Figure~\ref{fig:interval-poset-construction} gives an example.

\begin{figure}[ht]
\input{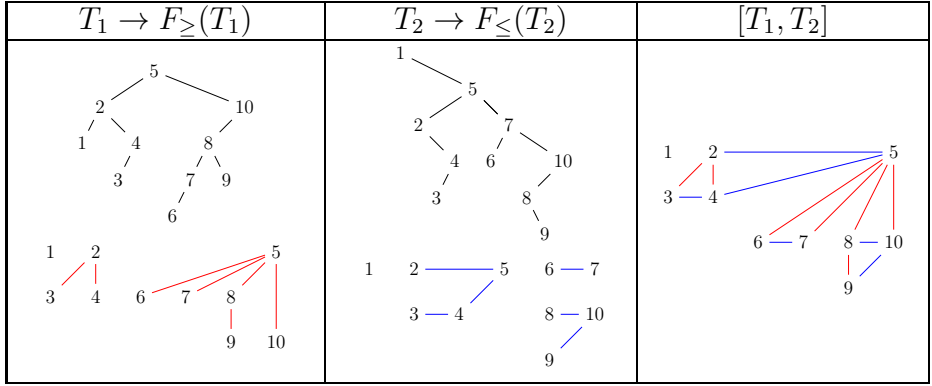}
\caption{Two trees $T_1 \leq T_2$ in the Tamari lattice and their corresponding interval-poset.}
\label{fig:interval-poset-construction}
\end{figure}

To better understand the relations between Tamari intervals and interval-posets, we now recall some results from \cite[Prop. 2.9]{IntervalPosetsInitial}, which are immediate from the construction of interval-posets and the properties of initial and final forests.

\begin{Proposition}[From \cite{IntervalPosetsInitial} Prop. 2.9]
\label{prop:interval-poset-extension}
Let $I$ and $I'$ be two interval-posets such that their respective Tamari intervals are given by $[A,B]$ and $[A',B']$, then
\begin{enumerate}
\item $I'$ is an extension of $I$ if and only if $A' \geq A$ and $B' \leq B$;
\label{prop-en:interval-poset-extension}
\item $I'$ is a decreasing-extension of $I$ if and only if $A' \geq A$ and $B' = B$;
\label{prop-en:interval-poset-decreasing-extension}
\item $I'$ is an increasing-extension of $I$ if and only if $A' = A$ and $B' \leq B$.
\label{prop-en:interval-poset-increasing-extension}
\end{enumerate}

\end{Proposition}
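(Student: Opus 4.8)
The plan is to reduce all three equivalences to a single structural fact, namely that the Tamari order is detected by the final and initial forests: for two binary trees $X$ and $Y$ one has $X \leq Y$ in the Tamari lattice if and only if $\rel(\dec(X)) \subseteq \rel(\dec(Y))$, equivalently if and only if $\rel(\inc(Y)) \subseteq \rel(\inc(X))$. This monotonicity is exactly what one reads off from a single right rotation as in Definition~\ref{def:tree-rotation}: passing from $v_y(v_x(A,B),C)$ to $v_x(A,v_y(B,C))$ only \emph{adds} the decreasing relations placing $v_y$ and all of $C$ in the right subtree of $v_x$, and symmetrically only \emph{removes} the increasing relations placing $v_x$ and all of $A$ in the left subtree of $v_y$. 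Since right rotations generate the cover relations of the lattice, both containment statements follow; this is the characterization from \cite{IntervalPosetsInitial} that I would simply cite.

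First I would fix the dictionary coming from the bijection theorem recalled above: if $I$ corresponds to $[A,B]$, then by construction the decreasing relations of $I$ are exactly $\rel(\dec(A))$ and the increasing relations of $I$ are exactly $\rel(\inc(B))$, and likewise for $I'$ with $[A',B']$. The key elementary remark is that the sign of a relation is intrinsic: a pair $x \trprec y$ is increasing precisely when $x < y$ and decreasing precisely when $x > y$, regardless of the ambient poset. Consequently $\rel(I) \subseteq \rel(I')$ holds if and only if both $\rel(\dec(A)) \subseteq \rel(\dec(A'))$ and $\rel(\inc(B)) \subseteq \rel(\inc(B'))$ hold, since a decreasing relation of $I$ can only appear among the decreasing relations of $I'$, and dually on the increasing side.

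With this in hand, part~(\ref{prop-en:interval-poset-extension}) is immediate: $I'$ extends $I$ means $\rel(I) \subseteq \rel(I')$, which splits as above into $\rel(\dec(A)) \subseteq \rel(\dec(A'))$ and $\rel(\inc(B)) \subseteq \rel(\inc(B'))$; applying the forest characterization to each factor gives $A' \geq A$ and $B' \leq B$ respectively. For part~(\ref{prop-en:interval-poset-decreasing-extension}), a decreasing-extension adds no increasing relation, so the increasing relations of $I$ and $I'$ coincide, i.e.\ $\rel(\inc(B)) = \rel(\inc(B'))$; since $\inc$ is a bijection this forces $B' = B$, while the decreasing side still yields $A' \geq A$, and the converse is the same computation read backwards. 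Part~(\ref{prop-en:interval-poset-increasing-extension}) is the mirror image, exchanging the roles of $\dec$ and $\inc$ and of the two bounds, giving $A' = A$ and $B' \leq B$.

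The only genuine content is the forest monotonicity invoked at the outset; everything else is bookkeeping about which relations are increasing and which are decreasing. I therefore expect the main obstacle to be purely a matter of care rather than difficulty: stating the monotonicity cleanly (or citing it), and tracking the correct direction of inclusion on the initial-forest side, where it is the \emph{smaller} tree $B'$ that carries the \emph{larger} set of increasing relations, so that $\rel(\inc(B)) \subseteq \rel(\inc(B'))$ corresponds to $B' \leq B$.
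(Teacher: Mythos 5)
Your proposal is correct and takes essentially the same route as the paper: the paper gives no proof of Proposition~\ref{prop:interval-poset-extension}, recalling it from \cite{IntervalPosetsInitial} as immediate from the construction of interval-posets and the properties of initial and final forests, and your argument --- the dictionary $\rel(I)=\rel(\dec(A))\cup\rel(\inc(B))$, the observation that the sign of a relation is intrinsic so that the inclusion $\rel(I)\subseteq\rel(I')$ splits into a $\dec$-inclusion and an $\inc$-inclusion, and the forest-inclusion characterization of the Tamari order --- is exactly that outline made explicit. One caution: your rotation computation only establishes the forward implication (Tamari comparability implies inclusion of final forests and reverse inclusion of initial forests), which suffices for the ``if'' halves of all three statements, whereas the ``only if'' halves need the converse (forest inclusion implies comparability); that converse is part of the characterization you cite from \cite{IntervalPosetsInitial}, so it should stay a citation (or receive its own proof) rather than be presented as a consequence of the rotation argument.
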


As the Tamari lattice is also often defined on Dyck paths, it is legitimate to wonder what is the direct bijection between a Tamari interval $[D_1, D_2]$ of Dyck paths and an interval-poset. Of course, one can just transform $D_1$ and $D_2$ into binary trees through the bijection of Definition~\ref{def:dyck-tree} and then construct the corresponding final and initial forests. But because many statistics we study in this paper are more naturally defined on Dyck paths than on binary trees, we give the direct construction.

Recall that for each up-step $d$ in a Dyck path, there is a corresponding down-step $d'$ which is the first step you meet by drawing a horizontal line starting from $d$. From this, one can define a notion of nesting: an up-step $d_2$ (and its corresponding down-step $d_2'$) is nested in $(d,d')$ if it appears in between $d$, $d'$ in the binary word of the Dyck path.
\begin{Proposition}
\label{prop:dyck-dec-forest}
Let $D$ be a Dyck path on which we apply the following process:
\begin{itemize}
\item label from 1 to $n$ all pairs of up-steps and their corresponding down-steps by reading the up-steps on the Dyck path from left to right,
\item define a poset $P$ by $b \trprec_P a$ if and only if $b$ is nested in $a$ in the previous labeling.
\end{itemize}
Then $\dec(D) := \dec(\tree(D)) = P$.
\end{Proposition}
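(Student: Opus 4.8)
The plan is to induct on the size $n$ of $D$, using the recursive decomposition $D = D_1 1 D_2 0$ that defines $\tree$. Since both $P$ and $\dec(\tree(D))$ are specified through an ``if and only if'' on pairs $(b,a)$, it suffices to show that for all labels $a,b$ the up-step $b$ is nested in $a$ if and only if $v_b$ lies in the right subtree of $v_a$ in $\tree(D)$; the posets then have the same relations and hence coincide. To make the induction go through I would carry along a second claim, namely that the left-to-right order of the up-steps of $D$ agrees with the binary search tree (in-order) labeling of $\tree(D)$. This compatibility of the two labelings is what lets the Dyck-path labels be identified with node labels, and it follows from the very same recursion.

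First I would fix the decomposition. Writing $k = \size(D_1)$, and reading up-steps from left to right, the up-steps of $D_1$ receive labels $1,\dots,k$, the distinguished up-step of the final primitive factor $1 D_2 0$ receives label $k+1$, and the up-steps of $D_2$ receive labels $k+2,\dots,n$. On the tree side $\tree(D) = \bullet(\tree(D_1),\tree(D_2))$, so an in-order traversal visits the nodes of $\tree(D_1)$, then the root $v_{k+1}$, then the nodes of $\tree(D_2)$; by the inductive labeling claim these three blocks carry exactly the labels $\{1,\dots,k\}$, $\{k+1\}$, and $\{k+2,\dots,n\}$. This simultaneously establishes the labeling claim at size $n$ and identifies the left subtree of the root with $\tree(D_1)$ and the right subtree with $\tree(D_2)$.

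Next I would analyze nesting through this decomposition, exploiting that nesting is local: since $D_1$ and $D_2$ are themselves Dyck paths, the down-step matching any up-step of $D_1$ (resp. $D_2$) again lies in $D_1$ (resp. $D_2$), so the pair of each label in $\{1,\dots,k\}$ sits inside the factor $D_1$ and each pair in $\{k+2,\dots,n\}$ sits inside $D_2$, while the pair of label $k+1$ is exactly the distinguished up-step together with the final down-step, whose enclosed factor is precisely $D_2$. From this all cases read off: the labels nested in $k+1$ are exactly $\{k+2,\dots,n\}$, matching the nodes of the right subtree of the root; for $a,b \le k$ nesting in $D$ coincides with nesting in $D_1$ and for $a,b \ge k+2$ with nesting in $D_2$, so the inductive hypothesis applied to $D_1$ and to $D_2$ yields the right-subtree characterization inside each subtree; a label in $\{1,\dots,k\}$ and a label in $\{k+2,\dots,n\}$ are never nested one in the other, since their pairs are separated by the up-step $k+1$, matching the fact that one node lies in the left and the other in the right subtree; and $k+1$ is nested in nothing, matching that the root lies in no right subtree. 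Combining these gives $b \trprec_P a$ if and only if $v_b$ lies in the right subtree of $v_a$, that is $P = \dec(\tree(D))$.

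I do not expect a serious obstacle here; this is a clean structural induction. The point needing the most care is bundling the labeling statement into the induction and verifying the locality of nesting, i.e.\ that splitting off the primitive factor $1 D_2 0$ neither creates nor destroys nestings among the remaining labels. This is immediate once one notes that each Dyck factor is balanced and hence closed under the up-step/down-step matching, so no pair straddles the boundary between $D_1$, the step $k+1$, and $D_2$. The base cases ($D$ empty, or $D = 10$) are trivial, as they carry no relations at all.
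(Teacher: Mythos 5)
Your proposal is correct and follows essentially the same route as the paper: a structural induction on the decomposition $D = D_1 1 D_2 0$, identifying the left and right subtrees of the root with $\tree(D_1)$ and $\tree(D_2)$, followed by a case analysis on where the two labels fall relative to the root's label. The only difference is cosmetic — you make explicit the compatibility between the left-to-right up-step labeling and the in-order labeling of $\tree(D)$ (and index the root as $k+1$ rather than $k$), a point the paper's proof uses implicitly.
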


This bijection is actually a very classical one. It consists of shrinking the Dyck path into a tree skeleton. In Figure~\ref{fig:dyck-dec-forest}, we show in parallel the process of Proposition~\ref{prop:dyck-dec-forest} on the Dyck path and the corresponding binary tree.
\begin{figure}[ht]
\input{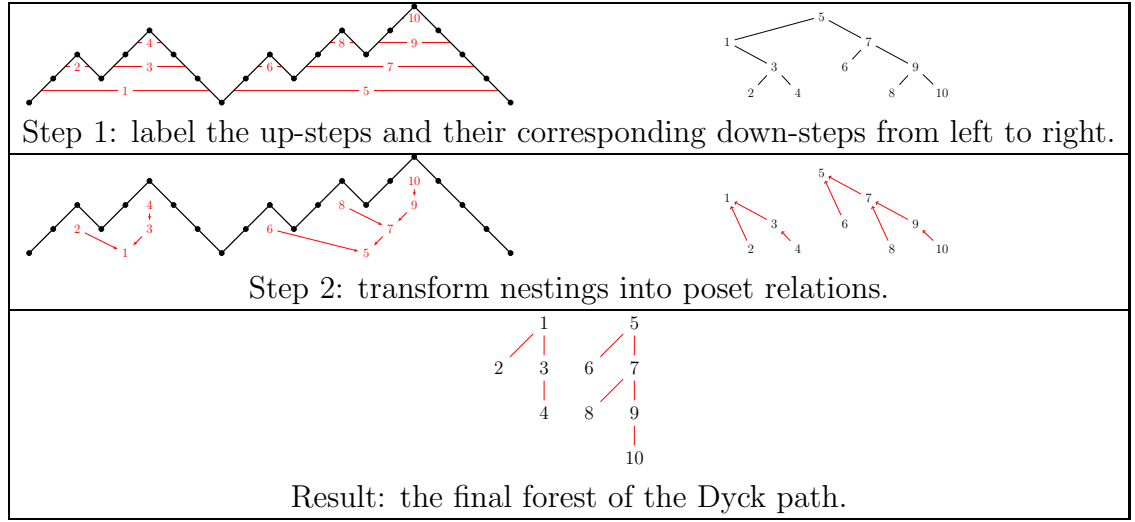}
\caption{Bijection between a Dyck path and its final forest.}
\label{fig:dyck-dec-forest}
\end{figure}

\begin{proof}
We use the recursive definition of the $\tree$ map. Let $D$ be a Dyck path. If $D$ is empty, then $\tree(D)$ is the empty binary tree and $\dec(D) = \dec(\tree(D))$ is the empty poset of size 0. If $D$ is a non-empty Dyck path, let $T = \tree(D)$. We want to check that $P$ is equal to $F := \dec(T)$. The path $D$ decomposes into $D = D_1 1 D_2 0$ with $\tree(D_1) = T_1$, the left subtree of $T$ and $\tree(D_2) = T_2$, the right subtree of $T$. We assume by induction that the proposition is true on $\dec(D_1)$ and $\dec(D_2)$. Let $1 \leq k \leq n$ be such that $\size(D_1) = k-1$ (in Figure~\ref{fig:dyck-dec-forest}, $k=5$): then $k$ is the label of the pair $(1,0)$ which appears in the decomposition of $D$. We also have that $v_k$ is the root of $T$. Now, let us choose $a < b \leq n$. Either
\begin{itemize}
\item $a < b < k$: the pairs of steps labeled by $a$ and $b$ both belong to $D_1$, we have $b \trprec_P a$ if and only if $b \trprec_F a$ by induction.
\item $b = k$: the pair labeled by $a$ belongs to $D_1$. It does not nest $k$, so $b \ntrprec_P a$. In $T$, $v_a$ is in $T_1$, the left subtree of $T$ and so we also have $b \ntrprec_F a$.
\item $ a < k < b$: then $a$ belongs to $D_1$ and $b$ belongs to $D_2$ In particular $b$ is not nested in $a$ and so $b \ntrprec_P a$. In $T$, $v_a$ is in $T_1$ and $v_b$ is in $T_2$. In particular, $v_b$ is not in the right subtree of $v_a$ and so $b \ntrprec_F a$. 
\item $a = k$: the pair labeled by $b$ belongs to $D_2$. It is nested in $k$, so $b \trprec_P a$. In $T$, $v_b$ belongs to $T_2$ the right subtree of $T$, we have $b \trprec_F a$.
\item $ k < a < b$: the pairs of steps labeled by $a$ and $b$ both belong to $D_2$, we have $b \trprec_P a$ if and only if $b \trprec_F a$ by induction.
\end{itemize}
\end{proof}

On binary trees, the constructions of the final and initial forests are completely symmetrical: the difference between the two only consists of a choice between left subtrees and right subtrees. Because the left-right symmetry of binary trees is not obvious when working on Dyck paths, the construction of the initial forest from a Dyck path gives a different algorithm than the final forest one.

\begin{Proposition}
\label{prop:dyck-inc-forest}
Let $D$ be a Dyck path of size $n$, we construct a directed graph following this process:
\begin{itemize}
\item label all up-steps of $D$ from 1 to $n$ from left to right,
\item for each up-step $a$, find, if any, the first up-step $b$ following the corresponding down-step of $a$ and add the edge $a \longrightarrow b$.
\end{itemize}
Then this resulting directed graph is the Hasse diagram of the initial forest of $D$.
\end{Proposition}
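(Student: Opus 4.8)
The plan is to argue by induction on the recursive decomposition $D = D_1 1 D_2 0$ used to define $\tree$, exactly in the spirit of the proof of Proposition~\ref{prop:dyck-dec-forest}. Write $T = \tree(D) = \bullet(\tree(D_1), \tree(D_2))$ and let $k$ be the label of the distinguished up-step of the factor $1 D_2 0$, so that $v_k$ is the root of $T$, the up-steps of $D_1$ carry the labels $1, \dots, k-1$, and those of $D_2$ carry the labels $k+1, \dots, n$. First I would analyze the structure of $\inc(D) = \inc(T)$ in these terms. Since the left subtree of any node of the right subtree lies inside that right subtree, and $v_k$ is the root whose whole left subtree is $\tree(D_1)$, one checks directly from the definition of $\inc$ that its relations split as follows: the relations of $\inc(D_1)$ on $\{1, \dots, k-1\}$, the relations of $\inc(D_2)$ on $\{k+1, \dots, n\}$, together with $a \trprec k$ for every $a \in \{1, \dots, k-1\}$, and nothing else. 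Consequently the cover relations of $\inc(D)$ are exactly the covers of $\inc(D_1)$, the covers of $\inc(D_2)$, and the relations $a \trprec k$ where $a$ ranges over the increasing roots (the maximal elements) of $\inc(D_1)$.

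Next I would match these cover relations with the edges produced by the construction. The matching down-step of an up-step never leaves the sub-path it belongs to, so for an up-step $a$ of $D_2$ its corresponding down-step and the first up-step following it both lie in $D_2$ (after $D_2$ comes only the final down-step); hence the edges issued from $D_2$-labels are exactly those the construction produces on $D_2$ alone, which by induction are the covers of $\inc(D_2)$. For the distinguished up-step $k$, its corresponding down-step is the final step of $D$, no up-step follows, and $k$ receives no outgoing edge, matching the fact that $v_k$ is the global increasing root. The interesting case is an up-step $a$ of $D_1$: its corresponding down-step lies in $D_1$, and there are two possibilities. If some up-step of $D_1$ follows that down-step, the first such up-step is the same whether we read $D$ or $D_1$, so the edge agrees with the one the construction produces on $D_1$ alone. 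If no up-step of $D_1$ follows it, then the next up-step in $D$ is the distinguished step $k$, which immediately follows $D_1$, so the construction outputs the edge $a \to k$.

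Finally I would close the induction. By the induction hypothesis the construction on $D_1$ produces the Hasse diagram of $\inc(D_1)$; therefore the up-steps $a$ of $D_1$ with no following up-step in $D_1$ are precisely those with no outgoing construction-edge in $D_1$, that is, the maximal elements of $\inc(D_1)$, which are its increasing roots. This identifies the edges $a \to k$ of the second case with the cover relations $a \trprec k$ for $a$ a root of $\inc(D_1)$, and the edges of the first case with the covers of $\inc(D_1)$. Combining the three cases shows the construction outputs exactly the cover relations of $\inc(D)$ listed above, completing the induction. I expect the main obstacle to be this boundary analysis inside $D_1$: one must verify carefully that the first up-step after the matching down-step genuinely jumps across the end of $D_1$ to the root step $k$ for \emph{precisely} the roots of $\inc(D_1)$, and that routing these relations through $k$ yields cover relations rather than mere comparabilities. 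Everything else is the same bookkeeping as in Proposition~\ref{prop:dyck-dec-forest}.
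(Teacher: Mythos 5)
Your proof is correct, and it follows the paper's proof in its overall skeleton (induction on the decomposition $D = D_1 1 D_2 0$, with the distinguished up-step $k$ and the three-way split of labels), but the two arguments diverge in what they actually compare. The paper sets $P$ to be the \emph{poset} generated by the constructed edges and proves $P = \inc(T)$ relation by relation; the delicate step there is showing $a \trprec_P k$ for all $a < k$, which is done by a maximality-plus-transitivity argument, and the claim that the edges form the Hasse diagram (rather than merely generating the right poset) is left implicit, resting on the fact that a digraph with at most one outgoing edge per vertex whose transitive closure is $\inc(T)$ must consist exactly of the cover relations. You instead work at the level of cover relations throughout: you first identify the covers of $\inc(D)$ as the covers of $\inc(D_1)$, the covers of $\inc(D_2)$, and the relations $a \trprec k$ for $a$ an increasing root of $\inc(D_1)$, and then match these against the constructed edges, using the induction hypothesis on $D_1$ to recognize the increasing roots of $\inc(D_1)$ as precisely the up-steps of $D_1$ with no following up-step inside $D_1$. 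This buys you two things: it proves the literal statement about the Hasse diagram directly, and it replaces the paper's maximality argument with bookkeeping that is arguably more transparent; the price is the extra structural lemma describing the relations and covers of $\inc(\bullet(T_1,T_2))$, which you state correctly and which is easy to verify from the definition of the initial forest. The only cosmetic omission is that you never state the base case (the empty path), which is trivial.
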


The construction is illustrated on Figure~\ref{fig:dyck-inc-forest}.

\begin{figure}[ht]
\input{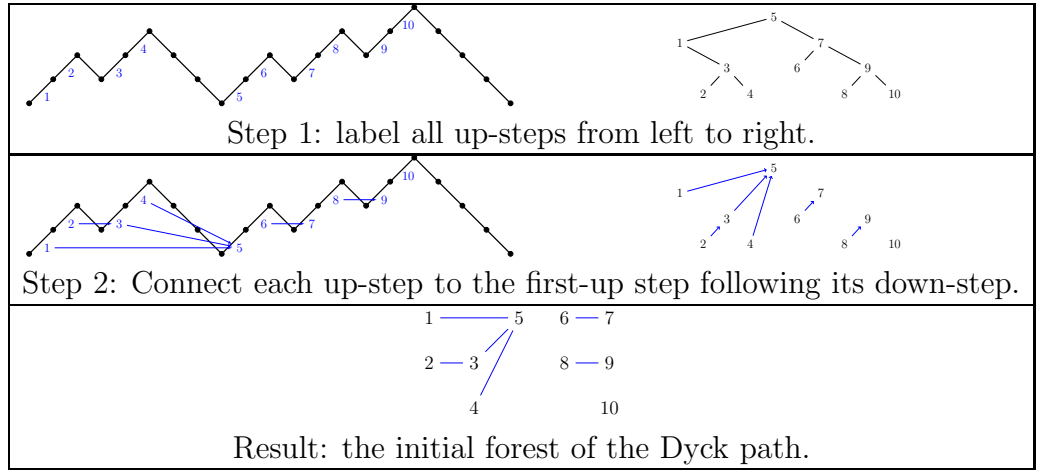}
\caption{Bijection between a Dyck path and its initial forest.}
\label{fig:dyck-inc-forest}
\end{figure}

\begin{proof}
We use the same induction technique as for the previous proof. The initial case is trivial. As before, when $D$ is non-empty, we have $D = D_1 1 D_2 0$ along with the corresponding trees $T$, $T_1$, and $T_2$ and $\size(D_1) = \size(T_1) = k-1$. We set $F := \inc(T)$ and we call $P$ the poset obtained by the algorithm.

First, let us prove that for all $a < k$, we have $a \trprec_P k$. Indeed suppose there exists $a < k$ with $a \ntrprec_P k$, we take $a$ to be maximal among those satisfying these conditions. We have $a \in D_1$ so its corresponding down-step appears before $k$, let $a' \leq k$ be the first up-step following the down-step of $a$. If $a' = k$, then $(a,k)$ is in the Hasse diagram of~$P$ and so $a \trprec_P k$. If $a' < k$, we have $a \trprec_P a'$ by definition and the maximality of $a$ gives $a' \trprec_P k$, which implies $a \trprec_P k$ by transitivity. 

Now let us choose $a < b \leq n$. Either
\begin{itemize}
\item $a < b < k$: the up-steps labeled by $a$ and $b$ both belong to $D_1$, we have $a \trprec_P b$ if and only if $a \trprec_F b$ by induction.
\item $b = k$: in $T$, $b$ is the root and $a$ is in its left subtree: we have $a \trprec_F b$. In $P$, we have also proved $a \trprec_P b$.
\item $a < k < b$: then $a$ belongs to $D_1$ and $b$ belongs to $D_2$ In particular $b$ is above $k$ in the path and there cannot be any link with $a$ even by transitivity, which means $a \ntrprec_P b$. In $T$, $v_a$ is in $T_1$ and $v_b$ is in $T_2$. In particular, $v_a$ is not in the left subtree of $v_b$ and so $a \ntrprec_F b$. 
\item $a = k$: the corresponding down-step of $a$ is the last step of $D$, which means there is no edge $(a,b)$ in $P$. Similarly, because $a$ is the tree root, there is no edge $(a,b)$ in $F$.
\item $ k < a < b$: the  up-steps labeled by $a$ and $b$ both belong to $D_2$, we have $a \trprec_P b$ if and only if $a \trprec_F b$ by induction.
\end{itemize}
\end{proof}

Now that we have described the relation between interval-posets and Tamari intervals both in terms of binary trees and Dyck path, we will often identify a Tamari interval with its interval-poset. When we refer to Tamari intervals in the future, we consider that they can be given indifferently by a interval-poset or by a couple of a lower bound and an upper bound $[A,B]$ where $A$ and $B$ can either be binary trees or Dyck paths.

\section{Statistics}
\label{sec:statistics}

\subsection{Statement of the main result}
\label{sec:statement}

\begin{Definition}
\label{def:contact-rise-dw}
Let $D$ be a Dyck path. 
\begin{itemize}
\item $\contacts(D)$ is the number of non-final contacts of the path $D$: the number of time the path $D$ touches
the line $y=0$ outside the final point.

\item $\rises(D)$ is the initial rise of $D$: the number of initial consecutive up-steps.

\item Let $u_i$ be the $i^{th}$ up-step of $D$, we consider the maximal subpath starting right after $u_i$ which is a Dyck path. Then the  \emph{contacts of $u_i$}, $\contactsStep{i}(D)$, are the number of non-final contacts of this Dyck path .

\item Let $v_i$ be the $i^{th}$ down-step of $D$, we say that the number of consecutive up-steps right after $v_i$ are the \emph{rises} of $v_i$ and write~$\risesStep{i}(D)$.

\item $\contactsV(D) := (\contacts(D), \contactsStep{1}(D), \dots, \contactsStep{n-1}(D))$ is the \emph{contact vector} of~$D$.

\item $\contactsV^*(D) := (\contactsStep{1}(D), \dots, \contactsStep{n-1}(D))$ is the \emph{truncated contact vector} of~$D$.

\item $\risesV(D) := (\rises(D), \risesStep{1}(D), \dots, \risesStep{n-1}(D))$ is the \emph{rise vector} of~$D$.

\item $\risesV^*(D) := (\risesStep{1}(D), \dots, \risesStep{n-1}(D))$ is the \emph{truncated rise vector} of~$D$.

\item Let  $X = (x_0, x_1, x_2, \dots)$ be a commutative alphabet, we write $\contactsP(D,X)$ the monomial $x_{\contacts(D)}, x_{\contactsStep{1}(D)}, \dots, x_{\contactsStep{n-1}(D)}$ and we call it the \emph{contact monomial} of $D$.

\item Let  $Y = (y_0, y_1, y_2, \dots)$ be a commutative alphabet, we write $\risesP(D,Y)$ the monomial $y_{\rises(D)}, y_{\risesStep{1}(D)}, \dots, y_{\risesStep{n-1}(D)}$ and we call it the \emph{rise monomial} of $D$.
\end{itemize}
\end{Definition}

\begin{figure}[ht]
\input{figures/contact_example}
\caption{Contacts and rises of a Dyck path}
\label{fig:contact-rise-example}
\end{figure}

Figure~\ref{fig:contact-rise-example} gives an example of the different contacts and rises values computed on a given Dyck path. The Dyck path can be easily reconstructed from $\risesV(D)$. This is also true of $\contactsV(D)$ even though it is less obvious. It will become clear once we express the statistics in terms of planar forests. At first, let us use the definitions on Dyck paths to express our main result on Tamari intervals.

\begin{Definition}
\label{def:contact-rise-intervals}
Consider an interval $I$ of the Tamari lattice described by two Dyck paths $D_1$ and $D_2$ with $D_1 \leq D_2$. Then
\begin{enumerate}
\item $\contactsStep{i}(I):= \contactsStep{i}(D_1)$ for $0 \leq i \leq n$, $\contactsV(I):=\contactsV(D_1)$, $\contactsV^*(I):=\contactsV^*(D_1)$, and $\contactsP(I,X):=\contactsP(D_1,X)$;

\item $\risesStep{i}(I):=\risesStep{i}(D_2)$ for $0 \leq i \leq n$, $\risesV(I):=\risesV(D_2)$, $\risesV^*(I):=\risesV^*(D_2)$ and $\risesP(I,Y) := \risesP(D_2,Y)$.
\end{enumerate}
To summarize, all the statistics we defined on Dyck paths are extended to Tamari intervals by looking at the \emph{lower bound} Dyck path $D_1$ when considering contacts and the \emph{upper bound} Dyck path $D_2$ when considering rises. 
\end{Definition}

Most of these statistics have been considered before on both Dyck paths and Tamari intervals. In \cite{mTamari}, one can find the same definitions for the initial rise $\rises(I)$ and number of non-final contacts $\contacts(I)$. Taking $x_0 = y_0 = 1$ in $\contactsP(I,X)$ and $\risesP(I,Y)$ corresponds to ignoring $0$ values in $\contactsV(I)$ and $\risesV(I)$: we find those monomials in Préville-Ratelle's thesis \cite{PRThesis}. Our definition of $\contactsP(I,X)$ is slightly different than the one of Préville-Ratelle: we will explain the correspondence in the more general case of $m$-Tamari intervals in Section~\ref{sec:mtam}. We now describe another statistic from \cite{PRThesis} which is specific to Tamari intervals: it cannot be defined through a Dyck path statistics on the interval end points. 

\begin{Definition}
\label{def:distance}
Let $I = [D_1, D_2]$ be an interval of the Tamari lattice. A \emph{chain} between $D_1$ and $D_2$ is a list of Dyck paths
\begin{equation*}
D_1 = P_1 < P_2 < \dots < P_k = D_2
\end{equation*}
which connects $D_1$ and $D_2$ in the Tamari lattice. If the chain comprises $k$ elements, we say it is of length $k-1$ (the number of cover relations).

We call the \emph{distance} of $I$ and write $\distance(I)$ the maximal length of all chains between $D_1$ and $D_2$. 
\end{Definition}

For example, if $I = [D,D]$ is reduced to a single element, then $\distance(I) = 0$. If $I = [D_1, D_2]$ and $D_1 \leq D_2$ is a cover relation of the Tamari lattice, then $\distance(I) = 1$. This statistic was first described in \cite{BergmTamari}. It generalizes the notion of \emph{area} of a Dyck path to an interval. To finish, we need the notation $\size(I)$, which is defined to be the size of the elements of $I$: if $I$ is an interval of Dyck paths of size $n$, then $\size(I) = n$. Note that it is also the number of vertices of the interval-poset representing $I$. We can now state the first version of the main result of this paper.

\begin{Theorem}[classical case]
\label{thm:main-result-classical}
Let $x,y,t,q$ be variables and $X = (x_0, x_1, x_2, \dots)$ and $Y = (y_0, y_1, y_2, \dots)$ be commutative alphabets. Consider the generating function

\begin{equation}
\Phi(t; x, y, X, Y, q) = \sum_{I} t^{\size(I)} x^{\contacts(I)} y^{\rises(I)} \contactsP(I,X) \risesP(I,Y) q^{\distance(I)}
\end{equation}  

summed over all intervals of the Tamari lattice. Then we have

\begin{equation}
\Phi(t; x, y, X, Y, q) = \Phi(t; y, x, Y, X, q).
\end{equation}
\end{Theorem}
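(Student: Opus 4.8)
The identity is the generating-function shadow of a bijective phenomenon, so the plan is to deduce it from a single statistic-exchanging involution, which is the content announced as Theorem~\ref{thm:rise-contact-statistics}. Concretely, I would construct an involution $\beta$ on the set of Tamari intervals of size $n$ (for every $n$) with the following properties for each interval $I$: $\size(\beta(I)) = \size(I)$ and $\distance(\beta(I)) = \distance(I)$; the leading statistics are exchanged, $\contacts(\beta(I)) = \rises(I)$ and $\rises(\beta(I)) = \contacts(I)$; and the full vectors are exchanged as multisets, so that $\contactsV(\beta(I))$ has the same multiset of entries as $\risesV(I)$, and symmetrically. Since the monomial $\contactsP(I,X)$ depends only on the multiset of entries of $\contactsV(I)$, these give $\contactsP(\beta(I),X) = \risesP(I,X)$ and $\risesP(\beta(I),Y) = \contactsP(I,Y)$. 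Re-indexing the defining sum of $\Phi(t;x,y,X,Y,q)$ by $I \mapsto \beta(I)$ then carries the term of $I$ to the term of $\beta(I)$ in $\Phi(t;y,x,Y,X,q)$, yielding the stated equality. Thus the whole theorem reduces to building $\beta$.

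The first step is to transport the four statistic families from the bounding Dyck paths onto the interval-poset $P$ of $I = [D_1,D_2]$, exploiting that $\dec(P) = \dec(D_1)$ records the contacts while $\inc(P) = \inc(D_2)$ records the rises. Reading Proposition~\ref{prop:dyck-dec-forest}, the nesting interpretation of $D_1$ shows that $\contacts(I)$ is the number of decreasing roots of $P$ and that $\contactsStep{i}(I)$ is the number of decreasing children of the vertex $i$; hence $\contactsP(I,X)$ is exactly the enumerator of decreasing-children degrees of the final forest. Dually, Proposition~\ref{prop:dyck-inc-forest} expresses the rises of $D_2$ through the initial forest: a maximal run of up-steps is a block of consecutive labels, so each nonzero entry of $\risesV(I)$ corresponds to a branch of $\inc(P)$, and $\risesP(I,Y)$ becomes the corresponding branch-length enumerator of the initial forest. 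I would record this dictionary as a preliminary lemma. The upshot is that the contact data and the rise data are two mirror-image \emph{shape} enumerators living respectively on the decreasing and increasing forests of the same object $P$, while $\distance(I)$ is the one statistic that is genuinely global (a longest-chain length) and is not read off either single forest.

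The heart of the argument is the construction of $\beta$, and here I would pass through an intermediate encoding of interval-posets by decorated trees --- the grafting trees mentioned in the introduction. The aim is to represent $P$ by a single planar tree carrying two complementary decorations, one recording the decreasing-children degrees of $\dec(P)$ and the other recording the branch data of $\inc(P)$, in such a way that admissibility of the decoration (i.e. that it comes from a genuine interval-poset, equivalently that $\rel(\dec(D_1)) \cup \rel(\inc(D_2))$ defines a poset) is a local condition on the tree. On this model $\beta$ is realized as the structural symmetry of the grafting tree that interchanges the two decorations; that it is an involution is then immediate, and the size is preserved by construction. The work is to check that the symmetry sends admissible decorated trees to admissible ones, so that $\beta(I)$ is again a Tamari interval, and that under the dictionary above it genuinely exchanges the contact and rise vectors.

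The main obstacle is the distance. Because $\distance(I)$ is a longest-chain statistic rather than a forest-local one, showing $\distance(\beta(I)) = \distance(I)$ is the delicate point. My plan is to give a closed expression for $\distance(I)$ directly on the interval-poset (reading the longest chain in $[D_1,D_2]$ off the relations of $P$) and to arrange the grafting-tree encoding so that this expression becomes a symmetric functional of the two decorations; the involution, which merely swaps those decorations, then preserves it automatically. Once the statistic-exchange and the distance-invariance are verified on the grafting-tree model, transporting back through the bijection between interval-posets and Tamari intervals (using Proposition~\ref{prop:interval-poset-extension} to control the bounds) completes the construction of $\beta$ and hence, via the reindexing above, the proof of Theorem~\ref{thm:main-result-classical}.
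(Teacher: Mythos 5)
Your global reduction (re-index the sum by a size- and distance-preserving involution that exchanges the statistics) is exactly the paper's strategy, and your contact-side dictionary is correct: by Proposition~\ref{prop:contact-dc}, $\contactsV(I)=\DC(I)$, so $\contactsP(I,X)$ is the enumerator of decreasing-children degrees of the final forest. The gap is on the rise side. A maximal run of up-steps of $D_2$ records which up-steps of $D_2$ are nested in one another, and nesting is precisely the \emph{decreasing} information of $D_2$ --- the information that is discarded when the interval-poset keeps only $\inc(D_2)$. Consequently the rise vector is not a local ``branch-length enumerator'' of the initial forest, not even as a multiset. Concretely, take $D_2=11011000$ and $I=[D_2,D_2]$: then $\risesV(I)=(2,2,0,0)$, while the initial forest of $D_2$ has the single increasing relation $2\trprec 3$, so $\IC(I)=(3,0,1,0)$; the multisets $\lbrace 2,2,0,0\rbrace$ and $\lbrace 3,1,0,0\rbrace$ differ, and the rise-blocks $\lbrace 1,2\rbrace$, $\lbrace 3,4\rbrace$ are not branches of $\inc(I)$ in any sense. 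This is exactly the warning in the remark following Proposition~\ref{prop:contact-dc}: rises have no easy direct interpretation on the interval-poset, so the ``preliminary lemma'' your construction rests on is false.

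This falsity propagates to the heart of your plan. The grafting tree $(T,\ell)$ is intrinsically asymmetric --- its shape $T$ is the upper-bound binary tree (equivalently $\inc$) and its labels $\ell$ are the decreasing-children counts (equivalently $\dec$) --- so there is no ``structural symmetry interchanging the two decorations,'' and the paper's Remark~4.17 notes that grafting trees in fact behave badly under the up-down symmetry. What the paper does instead is a conjugation: the complement $\compl$ (up-down Tamari symmetry on interval-posets) swaps $\IC$ with $\DC$ and preserves distance (Propositions~\ref{prop:complement-ic-dc} and~\ref{prop:complement-distance}), while the left-branch involution $\leftbranch$ on grafting trees supplies the missing bridge between the non-local rise data and the local $\IC$ data: $\risesV(I)=\IC(\leftbranch(I))$, with $\contacts$, $\contactsP$ and $\distance$ unchanged (Proposition~\ref{prop:leftbranch-statistics}). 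The involution is then $\risecontact=\leftbranch\circ\compl\circ\leftbranch$ (Theorem~\ref{thm:rise-contact-statistics}), an involution because it is a conjugate of one; distance-invariance follows factor by factor from the Tamari-inversion formula (Proposition~\ref{prop:tamari-inversions}) and its grafting-tree form (Proposition~\ref{prop:grafting-tree-distance}), which is close to your closed-formula idea but is not a ``symmetric functional of two swapped decorations.'' Without the bridge $\risesV(I)=\IC(\leftbranch(I))$, or an equivalent substitute, your argument cannot exchange $\risesP$ with $\contactsP$, and the proof does not go through.
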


For $x_0 = y_0 = 1$, this corresponds to a special case of \cite[Conjecture 17]{PRThesis} where $m=1$, the general case will be dealt in Section~\ref{sec:mtam}. The case where $X,Y,$ and $q$ are set to 1 is proved algebraically in \cite{mTamari}. In this paper, we give a combinatorial proof by describing an involution on Tamari intervals that switches $\contacts$ and $\rises$ as well as $\contactsP$ and $\risesP$. The involution is described in Section~\ref{sec:involutions}. 

One corollary of Theorem~\ref{thm:main-result-classical} is that the symmetry also exists when we restrict the sum to Dyck paths,

\begin{equation}
 \sum_{D} P_D(t,X,Y)  = \sum_{D} P_D(t,Y,X),
\end{equation}
where $P_D(t,X,Y) = t^{\size(D)} x^{\contacts(D)} y^{\rises(D)} \contactsP(D,X) \risesP(D,Y)$, summed over all Dyck paths. Indeed, an interval with distance 0 is reduced to a single element and, in this case, the statistics of the interval correspond to the classical statistics on the Dyck path. This particular case can be proved directly by conjugating two very classical involutions on Dyck path: the reversing of the Dyck path and the Tamari symmetry. We illustrate this in Figure~\ref{fig:inv_dyckpaths}. What we call the ``Tamari symmetry'' is the natural involution that is given by the top-down symmetry of the Tamari lattice itself. It is described more directly on binary trees, where it corresponds to recursively switching left and right subtrees. The Tamari symmetry is by nature compatible with the Tamari order and can be directly generalized to intervals. This is not the case of the reversal of Dyck path. In other words, if two Dyck paths are such that $D_1 \leq D_2$ in the Tamari lattice, then in general $D_1'$ is not comparable to $D_2'$, where $D_1'$ and $D_2'$ are the reverse Dyck paths of $D_1$ and $D_2$ respectively. This is exactly where lies the difficulty in finding the rise-contact involution on Tamari intervals: the transformation of $D_1$ and $D_2$ are inter correlated. Basically, we have found a way to reverse $D_2$ by keeping track of $D_1$. First, let us interpret the statistics directly on interval-posets.

\begin{figure}[ht]
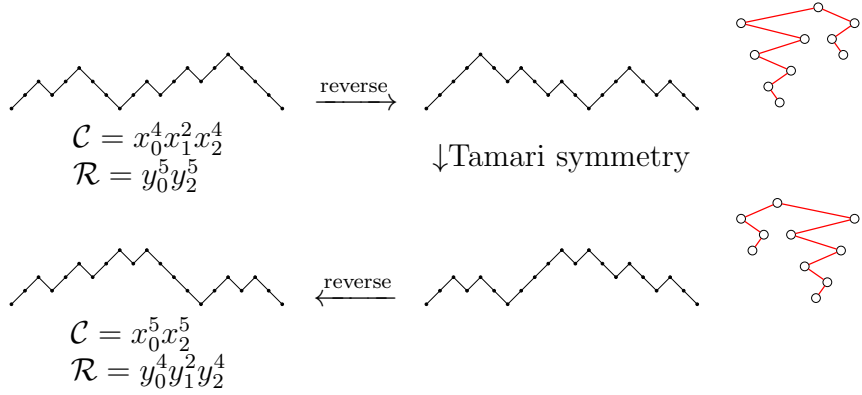

\begin{tabular}{cccc}
\scalebox{.3}{\input{figures/dyck/D10-ex1}}
&
$\xrightarrow{\text{reverse}}$
&
\scalebox{.3}{\input{figures/dyck/D10-ex2}}
&
\scalebox{.3}{\input{figures/trees/T10-ex1}}
\\
\begin{tabular}{l}
$\contactsP = x_0^4 x_1^2 x_2^4$  \\
 $\risesP = y_0^5 y_2^5$
\end{tabular}
& &
$\left\downarrow \text{Tamari symmetry} \right.$
&
\\
\scalebox{.3}{\input{figures/dyck/D10-ex4}}
&
$\xleftarrow{\text{reverse}}$
&
\scalebox{.3}{\input{figures/dyck/D10-ex3}}
&
\scalebox{.3}{\input{figures/trees/T10-ex2}}
\\
\begin{tabular}{l}
$\contactsP = x_0^5 x_2^5 $  \\
$\risesP = y_0^4 y_1^2 y_2^4$
\end{tabular}
&
&
&
\end{tabular}
\caption{The rise-contact involution on Dyck paths}
\label{fig:inv_dyckpaths}
\end{figure}

\begin{Definition}
Let I be an interval-poset of size $n$, we define
\begin{itemize}
\item $\dcstep{0}(I)$ (resp. $\icinf(I)$) is the number of decreasing (resp. increasing) roots of $I$.
\item $\dcstep{i}(I)$ (resp. $\icstep{i}(I)$) for $1 \leq i \leq n$ is the number of decreasing (resp. increasing) children of the vertex $i$.
\item $\DC(I) := (\dcstep{0}(I), \dcstep{1}(I), \dots, \dcstep{n-1}(I))$ is called the \emph{final forest vector} of $I$ and $\DC^*(I) := (\dcstep{1}(I), \dots, \dcstep{n-1}(I))$ is the \emph{truncated final forest vector}.
\item $\IC(I) := (\icinf(I), \icstep{n}(I), \dots, \icstep{2}(I))$ is called the \emph{initial forest vector} of $I$ and $\IC^*(I) := (\icstep{n}(I), \dots, \icstep{2}(I))$ is the \emph{truncated initial forest vector}.
\end{itemize}
\end{Definition}

Note that we do not include $\dcstep{n}$ nor $\icstep{1}$ in the corresponding vectors as they are always 0. The vertices of $I$ are read in their natural order in $\DC$ and in reverse order in $\IC$: this follows a natural traversal of the final (resp. initial) forests from roots to leaves. As an example, in Figure~\ref{fig:interval-poset-example2}, we have $\DC(I) = \left(3, 0,2,0,0,4, 0,0,1, 0 \right)$ and $\IC(I) = \left( 4, 2,0,0,1,0,2,1,0,0 \right)$.

\begin{Proposition}
\label{prop:contact-dc}
Let $I$ be an interval-poset, then $\DC(I) = \contactsV(I)$.
\end{Proposition}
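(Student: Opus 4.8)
The plan is to work entirely with the lower Dyck path $D := D_1$, since both vectors are read off it alone: by Definition~\ref{def:contact-rise-intervals} we have $\contactsV(I) = \contactsV(D_1)$, while $\DC(I)$ depends only on the decreasing relations of $I$, which form the final forest $\dec(I) = \dec(D_1)$. By Proposition~\ref{prop:dyck-dec-forest}, this final forest is exactly the \emph{nesting poset} of $D$: after labelling each pair consisting of an up-step together with its matching down-step from $1$ to $n$ in the left-to-right order of the up-steps, we have the decreasing relation $b \trprec a$ if and only if the pair $b$ is nested inside the pair $a$. Because this labelling gives the pair $i$ the same index as its up-step $u_i$, the indexings of the two vectors are automatically compatible, so it suffices to compare the coordinates one by one.

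The elementary fact I would isolate first is that \emph{for any Dyck path the number of non-final contacts with its baseline equals the number of primitive factors in its decomposition into primitive Dyck paths}. Indeed, a path with $m$ primitive factors meets its baseline at exactly $m+1$ points — the start, the $m-1$ interior junctions between consecutive factors, and the end — so discarding the final point leaves $m$ non-final contacts. Applying this to $D$ itself handles the $0$-th coordinate: the primitive factors of $D$ are precisely the pairs that are nested in no other pair, which are exactly the decreasing roots of $\dec(D)$; hence $\contacts(D) = \dcstep{0}(I)$.

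For a coordinate with $1 \le i \le n-1$, fix the up-step $u_i$ and say it starts at height $h$. Its matching down-step is the first return to height $h$, so the maximal subpath starting right after $u_i$ that is again a Dyck path is precisely the portion $D^{(i)}$ enclosed strictly between $u_i$ and its matching down-step, sitting on the baseline $y = h+1$. An up-step of $D$ lies on this baseline if and only if it begins a primitive factor of $D^{(i)}$, and I would check that these up-steps are exactly those whose pairs are nested \emph{directly} in $i$: the existence of an intermediate pair $k$ with $j \trprec k \trprec i$ would force $u_j$ to sit at height $\ge h+2$, contradicting that $u_j$ is on the baseline $h+1$. Thus the primitive factors of $D^{(i)}$ biject with the decreasing-cover children of $i$, and the elementary fact gives $\contactsStep{i}(D) = \dcstep{i}(I)$. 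Assembling the three cases yields $\DC(I) = \contactsV(I)$.

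The main obstacle is precisely this middle step: showing that the top-level arches of the enclosed subpath $D^{(i)}$ correspond to decreasing \emph{children} (cover relations of the final forest) and not merely to arbitrary descendants. Everything else is bookkeeping, but here the cover-relation structure of $\dec(D)$ must be matched exactly against the baseline-contact structure of $D$, and the argument hinges on the fact that the matching down-step of an up-step is the \emph{first} return to its starting height.
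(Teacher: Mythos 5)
Your proof is correct and follows essentially the same route as the paper: identify non-final contacts of the lower path with decreasing roots, and identify the contacts of the enclosed subpath $D^{(i)}$ with the decreasing-cover children of vertex $i$, using the nesting description of the final forest from Proposition~\ref{prop:dyck-dec-forest}. The paper states this in two sentences and calls it clear; your writeup simply supplies the height argument that the top-level arches of $D^{(i)}$ are exactly the cover relations, which is the detail the paper leaves implicit.
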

\begin{proof}
This is clear from the construction of the final forest from the Dyck path given in Proposition~\ref{prop:dyck-dec-forest}. Indeed, each non-final contact of the Dyck path corresponds to exactly one decreasing root of the interval-poset. Then the decreasing children of a vertex are the contacts of the Dyck path nested in the corresponding (up-step, down-step) tuple.
\end{proof}

\begin{Remark}
The vector $\IC(I)$ is not equal to $\risesV(I)$ in general. In fact, the interpretation of rises directly on the interval-poset is not easy. What we will prove anyway is that the two vectors can be exchanged through an involution on $I$. This involution is shown in Section~\ref{sec:involutions} and is a crucial step in proving Theorem~\ref{thm:main-result-classical}.
\end{Remark}

\subsection{Distance and Tamari inversions}
\label{sec:tamari-inversions}

Before describing the involutions used to prove Theorem~\ref{thm:main-result-classical}, we discuss more the \emph{distance} statistics on Tamari intervals in order to give a direct interpretation of it on interval-posets.

\begin{Definition}
\label{def:tamari-inversions}
Let $I$ be an interval-poset of size $n$. A pair $(a,b)$ with $ 1 \leq a < b \leq n$ is said to be a \emph{Tamari inversion} of $I$ when
\begin{itemize}
\item there is no $a \leq k < b$ with $b \trprec k$;
\item there is no $a < k \leq b$ with $a \trprec k$. 
\end{itemize} 
We write $\TInv(I)$ the set of Tamari inversions of a set $I$.
\end{Definition}

As an example, the Tamari inversions of the interval-poset of Figure~\ref{fig:interval-poset-example2} are exactly $(1,2), (1,5), (7,8), (7,10)$. As counter-examples, you can see that $(1,6)$ is not a Tamari inversion because we have $1 < 5 < 6$ and $6 \trprec 5$. Similarly, $(6,8)$ is not a Tamari inversion because there is $6 < 7 < 8$ and $6 \trprec 7$. Note also that if $(a,b)$ is a Tamari inversion of~$I$, then $a \ntrprec b$ and $b \ntrprec a$. Our goal is to prove the following statement.

\begin{Proposition}
\label{prop:tamari-inversions}
Let $I$ be an interval-poset, then $\distance(I)$ is equal to the number of Tamari inversions of $I$.
\end{Proposition}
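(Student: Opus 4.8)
The plan is to realize $\distance(I)$ as the length of a longest saturated chain in the Tamari interval $I = [D_1,D_2]$ and to match the cover relations of such a chain with the Tamari inversions of Definition~\ref{def:tamari-inversions}. I fix the upper bound $D_2$ and move the lower bound upward: if $D_1 \lessdot D_1'$ is a single right rotation with $D_1' \trpreceq D_2$, then by Proposition~\ref{prop:interval-poset-extension}(2) the interval-poset $I' = [D_1',D_2]$ is a \emph{decreasing-extension} of $I$, obtained by adding decreasing relations. Thus a saturated chain $D_1 = P_1 \lessdot \dots \lessdot P_\ell = D_2$ corresponds to a chain of decreasing-extensions from $I$ up to the single-point interval-poset $[D_2,D_2]$, and $\distance(I)$ is the maximal such $\ell$. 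The strategy is to prove two inequalities: $\distance(I) \le |\TInv(I)|$ by showing every upward cover relation \emph{strictly} decreases the number of Tamari inversions, and $\distance(I) \ge |\TInv(I)|$ by exhibiting a saturated chain that decreases this number by \emph{exactly one} at each step. Telescoping along any chain then forces equality, since the top poset has no Tamari inversions.

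First I would settle the base case and monotonicity. The single-point interval-poset $[D_2,D_2]$ is the full poset of the tree $D_2$ (all edges pointing to the root); a short case analysis on the least common ancestor of $v_a$ and $v_b$ shows that for every $a < b$ one of the two conditions of Definition~\ref{def:tamari-inversions} fails, so $\TInv([D_2,D_2]) = \emptyset$, matching $\distance([D_2,D_2]) = 0$. For monotonicity I observe that the second condition of Definition~\ref{def:tamari-inversions} involves only increasing relations, which are unchanged along a decreasing-extension, while the first condition involves decreasing relations, of which we only add more. Hence every Tamari inversion of $I'$ is already one of $I$, i.e. $\TInv(I') \subseteq \TInv(I)$: no cover relation can create an inversion.

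The core computation is to read a single right rotation at the level of the poset. Writing it as $v_y(v_x(A,B),C) \mapsto v_x(A,v_y(B,C))$ with $x < y$, the binary-search-tree labeling is preserved and, passing to the interval-poset, $I'$ is obtained from $I$ by adding the single decreasing relation $(y,x)$ and closing under transitivity (the closure stays decreasing by Remark~\ref{rem:adding-decreasing}). I would then verify that $(x,y)$ is a Tamari inversion of $I$ which ceases to be one in $I'$: the new relation $y \trprec x$ witnesses the failure of the first condition at $k = x$. Together with the monotonicity $\TInv(I') \subseteq \TInv(I)$, this yields the strict decrease $|\TInv(I')| < |\TInv(I)|$ for \emph{every} cover relation, and hence $\distance(I) \le |\TInv(I)|$. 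In particular $\TInv(I) = \emptyset$ forces $D_1 = D_2$, so the inversion count reaches $0$ exactly at the top.

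For the reverse inequality I would show that whenever $D_1 < D_2$ one can choose the rotation so that precisely one Tamari inversion is lost; iterating and invoking the base case then produces a saturated chain of length exactly $|\TInv(I)|$. Concretely I would select an admissible rotation that is \emph{minimal}, so that its added relation $(y,x)$ forces as few transitive consequences as possible, and then verify that $(x,y)$ is the only affected pair. This last point is the main obstacle: one must translate the Tamari-inversion conditions into the forest structure of $\dec(I)$ and $\inc(I)$, prove that such a minimal admissible rotation always exists when $I$ is not a single point, and control exactly which pairs $(a,b)$ change status once $(y,x)$ and its transitive closure are inserted into the interval-poset. Establishing that a well-chosen rotation disturbs a single inversion — neither more (which would shorten the chain) nor leaving an inversion permanently unresolved — is where the careful bookkeeping lies, and it is what upgrades the inequalities to the desired equality $\distance(I) = |\TInv(I)|$.
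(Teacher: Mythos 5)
Your framework and the inequality $\distance(I) \le |\TInv(I)|$ are essentially sound, and your packaging of that half is arguably cleaner than the paper's: the observation that a decreasing-extension can only destroy Tamari inversions (the second condition of Definition~\ref{def:tamari-inversions} sees only increasing relations, which are frozen, while the first sees only decreasing ones, which only grow) is correct, and the verification you defer --- that the rotation pair $(x,y)$ is a Tamari inversion of $I$ --- is true and short. It is precisely the paper's Lemma~\ref{lem:tamari-inversions-extension}: for $x < k < y$, the Tamari axiom applied in $I'$ to the new relation $y \trprec_{I'} x$ forces $k \trprec_{I'} x$, which (using that $(y,x)$ is a decreasing-cover relation of $I'$ and antisymmetry) rules out both $y \trprec_I k$ and $x \trprec_I k$.

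The genuine gap is the reverse inequality, and you have named it yourself without closing it. ``Select a minimal admissible rotation and verify that $(x,y)$ is the only affected pair'' is not a sketch completable by routine bookkeeping; it is the entire technical content of the proposition, and it is where the paper spends its effort (Lemma~\ref{lem:tamari-inversions-adding}). The choice of step really matters: a cover can destroy several inversions at once. In $\Tamk{3}$, the full interval (the empty interval-poset) has three Tamari inversions and distance three, but the cover of the minimum by the balanced tree yields the interval-poset $\{3 \trprec 2\}$, which has the single inversion $(1,2)$ --- two inversions die in one step, reflecting the fact that the Tamari lattice is not graded. So one must \emph{prove} that whenever $\TInv(I) \neq \emptyset$ there exists a step killing exactly one inversion. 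The paper does this at the poset level rather than the rotation level: take the lexicographically \emph{first} inversion $(a,b)$, add the relation $(b,a)$, and close transitively (Remark~\ref{rem:adding-decreasing}). One must then show (i) every $k$ with $a < k < b$ already satisfies $k \trprec_I a$, which uses lexicographic minimality; (ii) the Tamari axiom survives the addition, so the result is again an interval-poset, whose bounds have the form $[T_1', T_2]$ with $T_1' > T_1$ by Proposition~\ref{prop:interval-poset-extension}; and (iii) every inversion of $I$ other than $(a,b)$ remains an inversion of $I'$, again by a case analysis resting on lexicographic minimality. Without (i)--(iii), or an equivalent argument, your proposal establishes only one of the two inequalities.
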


The proof of Proposition~\ref{prop:tamari-inversions} requires two inner results that we express as Lemmas.

\begin{Lemma}
\label{lem:tamari-inversions-extension}
Let $I$ be an interval-poset whose Tamari interval is given by $[T_1, T_2]$ where $T_1$ and $T_2$ are binary trees. Let $I'$ be another interval given by $[T_1',T_2]$ with $T_1' > T_1$ in the Tamari lattice. Then the interval-poset of $I'$ is an extension of $I$ such that if we have $a < b$ with $(b,a)$ a decreasing-cover relation of $I'$ with $b \ntrprec_I a$, then $(a,b)$ is a Tamari inversion of $I$. In other words, $I'$ can be obtained from $I$ by adding only decreasing relations given by some Tamari inversions.
\end{Lemma}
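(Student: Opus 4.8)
The plan is to reduce the ``extension'' claims to Proposition~\ref{prop:interval-poset-extension} and then to extract the combinatorial content---that every genuinely new decreasing-cover relation of $I'$ encodes a Tamari inversion of $I$---directly from the Tamari axiom. Since $I' = [T_1', T_2]$ has the same upper bound $T_2$ as $I = [T_1, T_2]$ and lower bound $T_1' \geq T_1$, Proposition~\ref{prop:interval-poset-extension}(\ref{prop-en:interval-poset-decreasing-extension}) immediately tells us that $I'$ is a decreasing-extension of $I$: it is an extension, and every relation of $I'$ that is not a relation of $I$ is decreasing. This settles the first assertion and the ``in other words'' reformulation, so the real work is the statement about cover relations.

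So fix $a < b$ with $(b,a)$ a decreasing-cover relation of $I'$ (a cover relation of $\dec(I')$) and $b \ntrprec_I a$; I must check that $(a,b)$ satisfies the two defining conditions of a Tamari inversion of $I$ (Definition~\ref{def:tamari-inversions}). The engine of both verifications is the following observation: since $I'$ is an interval-poset and $b \trprec_{I'} a$, the Tamari axiom applied to any triple $a < k < b$ forces $k \trprec_{I'} a$. I will use this together with the facts that the relations of $I$ are relations of $I'$ (extension) and that $I'$ is a poset (antisymmetry), and I will argue by contradiction in each case.

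For the first condition, suppose some $a \leq k < b$ has $b \trprec_I k$. The case $k = a$ is excluded at once because it would give $b \trprec_I a$, contradicting $b \ntrprec_I a$. For $a < k < b$, the extension property gives $b \trprec_{I'} k$, while the observation above gives $k \trprec_{I'} a$; both are decreasing relations, so $b \trprec_{\dec(I')} k \trprec_{\dec(I')} a$ exhibits $k$ as a witness that $(b,a)$ is not a cover of $\dec(I')$, a contradiction. For the second condition, suppose some $a < k \leq b$ has $a \trprec_I k$, hence $a \trprec_{I'} k$ by the extension property. If $k = b$ this already clashes with $b \trprec_{I'} a$ by antisymmetry; if $a < k < b$, the observation gives $k \trprec_{I'} a$, which together with $a \trprec_{I'} k$ again violates antisymmetry. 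Both conditions hold, so $(a,b) \in \TInv(I)$.

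I do not expect a serious obstacle here: the argument is essentially two applications of the Tamari axiom to the ``diagonal'' relation $b \trprec_{I'} a$. The only points demanding care are bookkeeping ones---keeping straight which relations live in $I$ versus $I'$, remembering that all the new relations are decreasing so that the derived relations genuinely sit in the final forest $\dec(I')$ when invoking the cover hypothesis, and handling the boundary cases $k = a$ and $k = b$ separately. The conceptual crux, and the one line worth isolating as a preliminary remark, is precisely that $b \trprec_{I'} a$ propagates to $k \trprec_{I'} a$ for every intermediate $k$.
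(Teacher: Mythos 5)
Your proof is correct and takes essentially the same route as the paper's: both invoke Proposition~\ref{prop:interval-poset-extension} for the decreasing-extension claim, then apply the Tamari axiom to the relation $b \trprec_{I'} a$ to force $k \trprec_{I'} a$ for every intermediate $k$, ruling out $b \trprec_I k$ via the cover hypothesis and $a \trprec_I k$ via antisymmetry. Your explicit handling of the boundary cases $k=a$ and $k=b$, and of the fact that the witnessing chain genuinely lies in $\dec(I')$, only spells out what the paper leaves implicit.
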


\begin{proof}
By Proposition~\ref{prop:interval-poset-extension}, we know that $I'$ is a decreasing-extension of $I$. This Lemma is then just a refinement of Proposition~\ref{prop:interval-poset-extension}, which states that the decreasing relations that have been added come from the Tamari inversions of $I$.

Let $(b,a)$ be a decreasing-cover relation of $I'$ such that $b \ntrprec_I a$. Because $I'$ is an extension of $I$, we also know that $a \ntrprec_I b$. Let $k$ be such that $a < k < b$. Because we have $b \trprec_{I'} a$, the Tamari axiom on $a,k,b$ gives us $k \trprec_{I'} a$. This implies that $b \ntrprec_{I'} k$ as $(b,a)$ is a decreasing-cover relation of $I'$ by hypothesis. In particular, we cannot have $b \trprec_I k$ either as any relation of $I$ is also a relation of $I'$. Similarly, we cannot have $a \trprec_I k$ as this would imply $a \trprec_{I'} k$, contradicting $k \trprec_{I'} a$.
\end{proof}

\begin{Lemma}
\label{lem:tamari-inversions-adding}
Let $I$ be an interval-poset such that $\TInv(I) \neq \emptyset$ and let $(a,b)$ be its first Tamari inversion in lexicographic order. Then by adding the relation $(b,a)$ to $I$, we obtain an interval-poset $I'$ such that the number of Tamari inversions of $I'$ is the number of Tamari inversions of $I$ minus one. 
\end{Lemma}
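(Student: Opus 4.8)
The plan is to verify two things about $I'$: that it is again an interval-poset, and that $\TInv(I') = \TInv(I) \setminus \{(a,b)\}$, from which the count statement is immediate. First I would check that the relation $(b,a)$ can legitimately be added: since $(a,b)$ is a Tamari inversion, the note following Definition~\ref{def:tamari-inversions} gives $a \ntrprec_I b$ and $b \ntrprec_I a$, so $(b,a)$ is addable, and by Remark~\ref{rem:adding-decreasing} every relation created by transitive closure is again decreasing. Hence $I'$ is a decreasing-extension of $I$ and no new increasing relation appears. The technical heart is a single auxiliary claim: for every $y$ with $a < y < b$ we have $y \trprec_{I'} a$. To prove it I would use lex-minimality: for such a $y$ the pair $(a,y)$ is lex-smaller than $(a,b)$, hence not a Tamari inversion of $I$. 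Its second defining condition cannot fail, since a witness $a < k \le y$ with $a \trprec_I k$ would satisfy $k \le y < b$ and thus contradict the second condition for $(a,b)$; so the first condition fails, giving some $a \le k < y$ with $y \trprec_I k$. An induction on $y$ (base case $k=a$, and $y \trprec_I k \trprec_{I'} a$ otherwise) then yields $y \trprec_{I'} a$.

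With this claim I would establish that $I'$ satisfies the Tamari axiom. Because no increasing relation is new, the increasing half of the axiom ($x \trprec z \Rightarrow y \trprec z$ for $x<y<z$) reduces at once to the corresponding statement in $I$. For the decreasing half it suffices to treat triples $x < y < z$ where $z \trprec_{I'} x$ is newly created; such a relation factors as $z \trpreceq_I b$, $b \trprec_{I'} a$, $a \trpreceq_I x$. I would then run through the possibilities for $z$ (namely $z = b$, or $z \trprec_I b$ with $z$ on either side of $b$) and for $x$ (namely $x = a$, or $a \trprec_I x$), each time reducing the goal $y \trprec_{I'} x$ to three tools: the Tamari axiom already valid in $I$, transitivity, and the auxiliary claim. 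The cases with $a \trprec_I x$ and $x > a$ are eliminated because the second condition for $(a,b)$ forces $x > b$, whence the Tamari axiom in $I$ makes $z \trprec_I x$ hold already, contradicting newness.

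Next I would compare the Tamari inversions. Since $I'$ has more relations and each of the two defining conditions has the shape ``there is no $k$ with a certain relation'', any Tamari inversion of $I'$ is automatically one of $I$, so $\TInv(I') \subseteq \TInv(I)$; moreover $(a,b) \notin \TInv(I')$, because $b \trprec_{I'} a$ with $k = a$ now breaks the first condition. For the reverse inclusion I must show no other Tamari inversion is lost. A new relation is decreasing, so it can only break the first condition of some $(a',b')$, via a new relation $b' \trprec_{I'} k$ with $a' \le k < b'$. Factoring through the chain and analysing $b' \trpreceq_I b$ and $a \trpreceq_I k$: the subcase $k = a$ with $b' \le b$ produces a Tamari inversion lex-smaller than $(a,b)$ (contradiction); the subcase $b' > b$ with $b' \trprec_I b$ either contradicts $(a',b')$ being a Tamari inversion of $I$ (take $k'=b$ in its first condition) or forces $b' \trprec_I k$ already; and the remaining subcases are ruled out by the second condition and by ordering. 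Thus $\TInv(I') = \TInv(I) \setminus \{(a,b)\}$, and $|\TInv(I')| = |\TInv(I)| - 1$.

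The main obstacle I anticipate is the verification that $I'$ remains an interval-poset: one must control the entire transitive closure of the added relation and exclude every Tamari-axiom violation. This is precisely where the lex-first choice of $(a,b)$ is indispensable, through the auxiliary claim $y \trprec_{I'} a$ for all $a < y < b$, in combination with the second Tamari-inversion condition; the remaining bookkeeping in the other two steps is routine by comparison.
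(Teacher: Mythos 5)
Your proposal is correct and follows essentially the same route as the paper's proof: check addability, establish the auxiliary claim that every $y$ with $a<y<b$ satisfies $y \trprec a$ (the paper proves it in $I$ by a minimal-counterexample argument, you in $I'$ by induction --- the same reasoning), verify the Tamari axiom by factoring each new decreasing relation through the added pair and invoking the claim, and preserve the other Tamari inversions via lex-minimality, the second condition for $(a,b)$, and Remark~\ref{rem:adding-decreasing}. The remaining differences are cosmetic --- e.g.\ in the last step the paper reaches an antisymmetry contradiction from two applications of the Tamari axiom, whereas you observe that $b' \trprec_I k$ must already hold in $I$, and you make explicit the inclusion $\TInv(I') \subseteq \TInv(I)$ that the paper leaves implicit --- so both arguments are valid.
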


\begin{proof}
Because $(a,b)$ is a Tamari inversion of $I$, we have $b \ntrprec_I a$ and $a \ntrprec_I b$, which means the relation $(b,a)$ can be added to~$I$ as a poset. We need to check that the result $I'$ is still an interval-poset.

Let us first prove that for all $k$ such that $a < k < b$, we have $k \trprec_I a$. Let us suppose by contradiction that there exist $a < k < b$ with $k \ntrprec_I a$ and let us take the minimal $k$ possible. Note that $(a,k)$ is smaller than $(a,b)$ in the lexicographic order, which implies that $(a,k)$ is not a Tamari inversion. If there is $k'$ such that $a < k' \leq k$ with $a \trprec_I k'$ then $(a,b)$ is not a Tamari inversion. So there is $k'$ with $a \leq k' < k$ with $k \trprec_I k'$. But because we took $k$ minimal, we get $k' \trpreceq_I a$, which implies $k \trprec_I a$ and contradicts the fact that $(a,b)$ is a Tamari inversion. 

Now, we show that the Tamari axiom is satisfied for all $a'$, all $k$, and all $b$ such that $a' < k < b'$. By Remark~\ref{rem:adding-decreasing}, we only have to consider decreasing relations. More precisely, the only cases to check are the ones where $b' \ntrprec_I a'$ and $b' \trprec_{I'} a'$, which means $a \trpreceq_I a'$ and $b' \trpreceq_I b$ (the relation is either directly added through $(b,a)$ or obtained by transitivity). Let us choose such a couple $(a',b')$ and first prove that $a' \leq a < b \leq b'$.

\begin{itemize}
\item $b' \neq a$ and $a' \neq b$ because both would imply $a \trprec_I b$, which contradicts the fact that $(a,b)$ is a Tamari inversion.

\item If $b' < a$, we have $b' < a < b$ and $b' \trprec_I b$, which implies $a \trprec_I b$ by the Tamari axiom on $(b',a,b)$. This contradicts the fact that $(a,b)$ is a Tamari inversion. 

\item If $a < b' < b$, we have proved $b' \trprec_I a$, which gives $b' \trprec_I a'$ by transitivity and contradicts our initial hypothesis.

\item If $b < a'$, we have $a < b < a'$ with $a \trprec_I a'$, which implies $b \trprec_I a'$ by the Tamari axiom on $(a,b,a')$. This gives $b' \trprec_I a'$ by transitivity and contradicts our initial hypothesis.

\item If $a < a' < b$ then we have $a \trprec a'$ and $(a,b)$ is not a Tamari inversion.
\end{itemize}

We now have $a' \leq a < b \leq b'$. Now for $k$ such that $a' < k < b'$, if $k < a$ we get $k \trprec_I a'$ by the Tamari axiom on $(a',k,a)$. If $a < k < b$, we have proved that $k \trprec_I a$ and so $k \trprec_I a'$ by transitivity. If $b < k < b'$, the Tamari axiom on $(b,k,b')$ gives us $k \trprec_I b$, which gives in $I'$ $k \trprec_{I'} b \trprec_{I'} a \trprec_{I'} a'$ so $k \trprec_{I'} a'$ by transitivity. In all cases, the Tamari axiom is satisfied in $I'$ for $(a',k,b')$.

There is left to prove that the number of Tamari inversions of $I'$ has been reduced by exactly one. More precisely: all Tamari inversions of~$I$ are still Tamari inversions of $I'$ except $(a,b)$. Let $(a',b')$ be another Tamari inversion of $I$. Because $(a,b)$ is minimal in lexicographic order, we have either $a' > a$ or $a' = a$ and $b' > b$.
\begin{itemize}
\item If $a' > a$, let $k$ be such that $a' \leq k < b'$. We have $b' \ntrprec_I k$. Suppose that we have $b' \trprec_{I'} k$, which means that it has been added by transitivity and so we have $b' \trprec_{I} b$ and $a \trprec_I k$. Because $(a,b)$ is a Tamari inversion of $I$, we get that $k > b$. We have $a < b < k$ and $b < k < b'$, the Tamari axioms on $(a,b,k)$ and $(b',k,b)$ leads to a contradiction in $I$. Now, let $k$ be such that $a' < k \leq b'$. We have $a' \ntrprec_I k$. No increasing relation has been created in $I'$ and so $a' \ntrprec_{I'} k$.
\item If $a = a'$ and $b' > b$, first note that $b' \ntrprec_I b$. Indeed we have $a' < b < b'$ and this would contradict the fact that $(a',b')$ is a Tamari inversion. Let $k$ be such that $a \leq k < b'$, then $b' \ntrprec_{I} k$. Because $b' \ntrprec_{I} b$, the relation $(b',k)$ cannot be obtained by transitivity in $I'$ and so $b' \ntrprec_{I'} k$. Now, if $a < k \leq b'$, we have $a \ntrprec_{I} k$ and by the same argument as earlier that no increasing relation has been created in $I'$, $a \ntrprec_{I'} k$.
\end{itemize}
\end{proof}

\begin{proof}[Proof of Proposition~\ref{prop:tamari-inversions}]
Let $I$ be an interval-poset containing $v$ Tamari inversions and whose bounds are given by two binary trees $[T_1, T_2]$. Suppose there is a chain of length $k$ between $T_1$ and $T_2$. In other words, we have $k+1$ binary trees
\begin{align*}
T_1 = P_1 < P_2 < \dots < P_{k+1} = T_2
\end{align*}
which connects $T_1$ and $T_2$ in the Tamari lattice. Let us look at the intervals $[P_i, T_2]$. Lemma~\ref{lem:tamari-inversions-extension} tells us that each of them can be obtained by adding decreasing relations $(b,a)$ to $I$ where $(a,b) \in \TInv(I)$. We now apply Proposition~\ref{prop:interval-poset-extension}. In our situation, it means that, for $1 \leq j \leq k+1$, the interval-poset of $[P_j,T_2]$ is an extension of every interval-posets $[P_i, T_2]$ with $1 \leq i \leq j$: the Tamari inversions that were added as decreasing relations in $[P_i,T_2]$ are kept in $[P_j,T_2]$. In other words, to obtain $P_{i+1}$ from $P_{i}$, one or more Tamari inversions of $I$ are added to $P_i$ as decreasing relations. At least one Tamari inversion is added at each step, which implies that $v \geq k$. This is true for all chain and thus $v \geq \distance(I)$.

Now, let us explicitly construct a chain between $T_1$ and $T_2$ of length~$v$. This will give us that $v \leq \distance(I)$ and conclude the proof. We proceed inductively.
\begin{itemize}
\item If $v = 0$, then $\distance(I) \leq v$ is also 0, which means $T_1 = T_2$: this is a chain of size 0 between $T_1$ and $T_2$.
\item We suppose $v > 0$ and we apply Lemma~\ref{lem:tamari-inversions-adding}. We take the first Tamari inversion of $\TInv(I)$ in lexicographic order and add it to~$I$ as a decreasing relation. We obtain an interval-poset $I'$ which is a decreasing-extension of $I$ with $v-1$ Tamari inversions. Then by Proposition~\ref{prop:interval-poset-extension}, the bounds of $I'$ are given by $[T_1',T_2]$ with $T_1' > T_1$. By induction, we construct a chain of size $v-1$ between $T_1'$ and $T_2$, which gives us a chain of size $v$ between $T_1$ and $T_2$.
\end{itemize}
\end{proof}

The interpretation of the distance of an interval as a direct statistic on interval-posets is very useful for our purpose here as it gives an explicit way to compute it and its behavior through our involutions will be easy to state and prove. It is also interesting in itself. Indeed, this statistic appears in other conjectures on Tamari intervals, for example Conjecture~19 of \cite{PRThesis}, which is related to the well known open $q$-$t$-Catalan problems.

\section{Involutions}
\label{sec:involutions}

\subsection{Grafting of interval-posets}
\label{sec:composition}

In this section, we revisit some major results of \cite{IntervalPosetsInitial} which we will be used to define some new involutions. 

\begin{Definition}
Let $I_1$ and $I_2$ be two interval-posets, we define a \emph{left grafting} operation and a \emph{right grafting} operation depending on a parameter $r$. Let $\alpha$ and $\omega$ be respectively the label of minimal value of $I_2$ (shifted by the size of $I_1$) and the label of maximal value of $I_1$. Let $c = \contacts(I_2)$ and $y_1, \dots y_c$ be the decreasing roots of $I_2$ (shifted by the size of $I_1$).

The left grafting of $I_1$ over $I_2$ with $\size(I_2) > 0$ is written as $I_1 \pleft I_2$. It is defined by the shifted concatenation of $I_1$ and $I_2$ along with relations $y \trprec \alpha$ for all $y \in I_1$. 

The right grafting of $I_2$ over $I_1$ with $\size(I_1) > 0$ is written as $I_1 \pright{r} I_2$ with $0 \leq r \leq c$. It is defined by the shifted concatenation of $I_1$ and $I_2$ along with relations $y_i \trprec \omega$ for $1 \leq i \leq r$. 
\end{Definition}

\begin{figure}[ht]
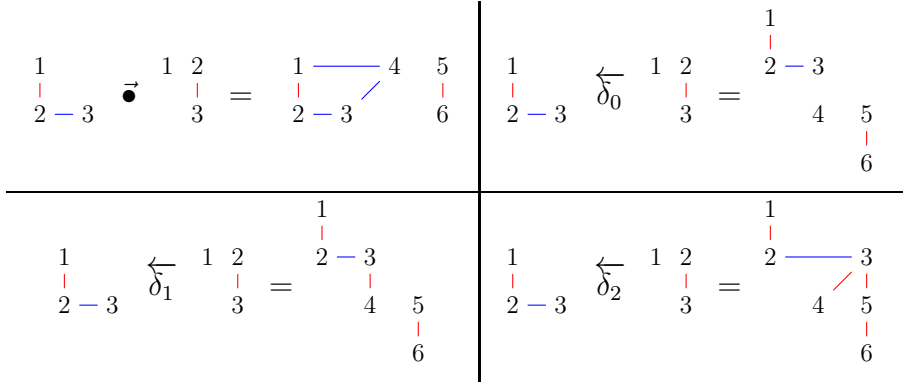


\begin{tabular}{c|c}
$\begin{aligned}\scalebox{0.8}{

\begin{tikzpicture}[xscale=0.8, yscale=0.8]
\node(T2) at (0,-1) {2};
\node(T1) at (0,0) {1};
\node(T3) at (1,-1) {3};
\draw[line width = 0.5, color=blue] (T2) -- (T3);
\draw[line width = 0.5, color=red] (T2) -- (T1);
\end{tikzpicture}
}\end{aligned}
\pleft
\begin{aligned}\scalebox{0.8}{

\begin{tikzpicture}[xscale=0.5, yscale=0.8]
\node(T1) at (0,0) {1};
\node(T2) at (1,0) {2};
\node(T3) at (1,-1) {3};
\draw[line width = 0.5, color=red] (T3) -- (T2);
\end{tikzpicture}
}\end{aligned} =
\begin{aligned}\scalebox{0.8}{\input{figures/interval-posets/I6-ex2}}\end{aligned} $
&
$\begin{aligned}\scalebox{0.8}{}\end{aligned}
\pright{0}
\begin{aligned}\scalebox{0.8}{}\end{aligned} =
\begin{aligned}\scalebox{0.8}{

\begin{tikzpicture}[scale=0.8]
\node(T2) at (0,-1) {2};
\node(T1) at (0,0) {1};
\node(T3) at (1,-1) {3};
\node(T4) at (1,-2) {4};
\node(T5) at (2,-2) {5};
\node(T6) at (2,-3) {6};
\draw[line width = 0.5, color=blue] (T2) -- (T3);
\draw[line width = 0.5, color=red] (T2) -- (T1);
\draw[line width = 0.5, color=red] (T6) -- (T5);
\end{tikzpicture}
}\end{aligned} $
 \\
 \hline
$\begin{aligned}\scalebox{0.8}{}\end{aligned}
\pright{1}
\begin{aligned}\scalebox{0.8}{}\end{aligned} =
\begin{aligned}\scalebox{0.8}{\input{figures/interval-posets/I6-ex4}}\end{aligned} $
&
$\begin{aligned}\scalebox{0.8}{}\end{aligned}
\pright{2}
\begin{aligned}\scalebox{0.8}{}\end{aligned} =
\begin{aligned}\scalebox{0.8}{\input{figures/interval-posets/I6-ex5}}\end{aligned}$
\end{tabular}
\caption{Grafting of interval-posets}
\label{fig:grafting}
\end{figure}

Figure~\ref{fig:grafting} gives an example. Note that the vertices of $I_2$ are always shifted by the size of $I_1$. For simplicity, we do not always recall this shifting: when we mention a vertex $x$ of $I_2$ in a grafting, we mean the shifted version of $x$. These two operations were defined in \cite[Def. 3.5]{IntervalPosetsInitial}. Originally, the right grafting was defined as a single operation $\pright{}$ whose result was a formal sum of interval-posets. In this paper, it is more convenient to cut it into different sub-operations depending on a parameter. We can use these operations to uniquely \emph{decompose} interval-posets: this will be explained in Section~\ref{sec:decomposition}. First, we will study how the different statistics we have defined are affected by the operations. We start with the contact vector $\contactsV$, which is equal to the final forest vector $\DC$.

\begin{Proposition}
\label{prop:grafting-contact}
Let $I_1$ and $I_2$ be two interval-posets of respective sizes $n > 0$ and $m > 0$, then
\begin{itemize}
\item $\contacts(I_1 \pleft I_2) = \contacts(I_1) + \contacts(I_2)$;
\item $\contactsV(I_1 \pleft I_2) = \left( \contacts(I_1) + \contacts(I_2), \contactsStep{1}(I_1), \dots, \contactsStep{n-1}(I_1),0, \contactsStep{1}(I_2), \dots, \contactsStep{m-1}(I_2) \right)$;
\item $\contacts(I_1 \pright{i} I_2) = \contacts(I_1) + \contacts(I_2) - i$;
\item $\contactsV(I_1 \pright{i} I_2) = \left( \contacts(I_1) + \contacts(I_2) - i, \contactsStep{1}(I_1), \dots, \contactsStep{n-1}(I_1),i, \contactsStep{1}(I_2), \dots, \contactsStep{m-1}(I_2) \right)$.
\end{itemize}

If $\size(I_1)= 0$ then $I_1 \pleft I_2 = I_2$ and $\contactsV(I_1 \pleft I_2) = \contactsV(I_2)$. If $\size(I_2)= 0$ then $I_1 \pright{i} I_2 = I_1$ and $\contactsV(I_1 \pright{i} I_2) = \contactsV(I_1)$.
\end{Proposition}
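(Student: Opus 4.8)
The plan is to reduce everything to the final-forest vector $\DC$ by Proposition~\ref{prop:contact-dc}, which identifies $\contactsV(I)$ with $\DC(I)$; concretely $\contacts(I) = \dcstep{0}(I)$ counts decreasing roots and $\contactsStep{j}(I) = \dcstep{j}(I)$ counts decreasing children of vertex~$j$. The degenerate cases are immediate: if $\size(I_1)=0$ then $I_1 \pleft I_2 = I_2$, and if $\size(I_2)=0$ then $I_1 \pright{i} I_2 = I_1$, so there is nothing to prove. For the two main cases the whole argument hinges on classifying the relations added by each grafting according to their direction, and then tracking only how the \emph{decreasing} structure (roots and cover-children) is modified.

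For the left grafting $I_1 \pleft I_2$, write $n = \size(I_1)$, so that $\omega = n$ and the added relations are $y \trprec \alpha$ with $\alpha = n+1$ the minimum of the shifted $I_2$. Since every $y \in I_1$ satisfies $y \leq n < \alpha$, each added relation is \emph{increasing}; by the increasing analogue of Remark~\ref{rem:adding-decreasing}, $I_1 \pleft I_2$ is an increasing-extension of the shifted concatenation, so its final forest is exactly the disjoint, label-shifted union of $\dec(I_1)$ and $\dec(I_2)$. Reading off $\DC$ entry by entry: the decreasing roots are those of $I_1$ together with the shifted roots of $I_2$, giving $\dcstep{0} = \contacts(I_1) + \contacts(I_2)$; for $1 \leq j \leq n-1$ the decreasing children of $j$ are unchanged, giving $\contactsStep{j}(I_1)$; vertex $n = \omega$ has $\dcstep{n} = \dcstep{n}(I_1) = 0$ (a maximal vertex never has decreasing children), which supplies the middle~$0$; and each vertex $n+k$ with $1 \leq k \leq m-1$ inherits the children of vertex $k$ of $I_2$, giving $\contactsStep{k}(I_2)$. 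This matches the stated vector, and the contact identity is its index-$0$ entry.

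For the right grafting $I_1 \pright{i} I_2$, the added relations are $y_j \trprec \omega$ for $1 \leq j \leq i$, where $y_1, \dots, y_c$ are the shifted decreasing roots of $I_2$; here each $y_j > n = \omega$, so these are \emph{decreasing}, and by Remark~\ref{rem:adding-decreasing} every relation induced by transitivity is decreasing as well. The decreasing roots of $I_1$ are untouched, while among the $c = \contacts(I_2)$ roots of $I_2$ exactly $y_1,\dots,y_i$ cease to be roots; hence $\dcstep{0} = \contacts(I_1) + \contacts(I_2) - i$, which is the first contact identity. Because each $y_j$ was a decreasing root of $I_2$ it has no outgoing decreasing edge there, so $y_j \trprec \omega$ is a cover relation and $\omega = n$ acquires exactly these $i$ new decreasing children, i.e.\ $\dcstep{n} = i$. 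All remaining child-counts are inherited unchanged, giving $\contactsStep{j}(I_1)$ for $1 \leq j \leq n-1$ and $\contactsStep{k}(I_2)$ at the shifted vertices $n+k$ for $1 \leq k \leq m-1$.

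The one point that needs real care — and which I expect to be the main obstacle — is the cover-relation bookkeeping for $\dcstep{n}$ in the right grafting. Transitivity does create relations $c \trprec \omega$ for every decreasing descendant $c$ of a grafted root $y_j$, and I must argue that these are \emph{not} cover relations (each factors as $c \trprec y_j \trprec \omega$) and so do not inflate the child-count of $\omega$ beyond $i$, while simultaneously checking that the decreasing cover relations internal to $I_2$ survive intact so that the counts $\dcstep{k}(I_2)$ are genuinely preserved. Once this is settled, every entry of both vectors is verified and the two contact-count identities are recovered as the respective index-$0$ entries.
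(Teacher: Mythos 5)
Your proposal is correct and follows essentially the same route as the paper's proof: reduce both statements to the final forest vector via Proposition~\ref{prop:contact-dc}, observe that the left grafting adds only increasing relations (so the final forests simply concatenate, shifted), and that the right grafting adds exactly $i$ decreasing cover relations $y_j \trprec \omega$ while all transitivity-induced relations factor through some $y_j$ and hence are not covers. In fact your cover-relation bookkeeping (including the factoring argument $c \trprec y_j \trprec \omega$ and the check that the internal decreasing covers of $I_1$ and $I_2$ survive) is carried out more carefully than in the paper, which simply asserts that the final forests concatenate and that $\omega$ has $i$ decreasing children ``by definition.''
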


This can be checked on Figure~\ref{fig:grafting}. We have $\contactsV(I_1) = \DC(I_1) = (2,1,0)$ because there are two connected components in the final forest ($2 \trprec 1$ and $3$) and $1$ and $2$ have respectively 1 and 0 decreasing children. For $I_2$, we get $\contactsV(I_2) = (2,0,1)$. Now, it can be checked that $\contactsV(I_1 \pleft I_2) = (4,1,0,0,0,1)$, $\contactsV(I_1 \pright{0} I_2) = (4,1,0,0,0,1)$, $\contactsV(I_1 \pright{1} I_2) = (3,1,0,1,0,1)$, $\contactsV(I_1 \pright{2} I_2) = (2,1,0,2,0,1)$. 

\begin{proof}
First, remember that, by Proposition~\ref{prop:contact-dc}, contacts can be directly computed on the final forest of the interval-posets: the non-final contacts correspond to the number of components and $\contactsStep{v}$ for $1 \leq v \leq n$ is the number of decreasing children of the vertex $v$. 

Now, in the left grafting $I_1 \pleft I_2$, the two final forests are simply concatenated. In particular, $\contacts(I_1 \pleft I_2) = \contacts(I_1) + \contacts(I_2)$. The contact vector $\contactsV(I_1 \pleft I_2)$ is then formed by this initial value followed by the truncated contact vector of $I_1$, then an extra 0, which corresponds to $\contactsStep{n}$, then the truncated contact vector of $I_2$.

The contacts of the right grafting $I_1 \pright{i} I_2$ depend on the parameter~$i$. Indeed, each added decreasing relation merges one component of the final forest of $I_2$ with the last component of the final forest of $I_1$ and thus reduces the number of components by one. As a consequence, we have $\contacts(I_1 \pright{i} I_2) = \contacts(I_1) + \contacts(I_2) - i$. The contact vector is formed by this initial value followed by the truncated contact vector of $I_1$, then the new number of decreasing children of $n$, which is $i$ by definition, then the truncated contact vector of $I_2$.
\end{proof}

Let us now study what happens to the rise vector $\risesV$ and the initial forest vector $\IC$. They both only depend of the initial forest (increasing relations). The vector $\IC$ can be read directly on the interval-poset and we get the following proposition.

\begin{Proposition}
\label{prop:grafting-ic}
Let $I_1$ and $I_2$ be two interval-posets of respective sizes $n >0$ and $m >0$, then
\begin{itemize}
\item $\icinf(I_1 \pleft I_2) = \icinf(I_2)$;
\item $\IC(I_1 \pleft I_2) = \left( \icinf(I_2), \icstep{m}(I_2), \dots, \icstep{2}(I_2), \icinf(I_1), \icstep{n}(I_1), \dots \icstep{2}(I_1) \right)$;
\item $\icinf(I_1 \pright{i} I_2) = \icinf(I_2) + \icinf(I_1)$;
\item $\IC(I_1 \pright{i} I_2) = \left(\icinf(I_2) + \icinf(I_1), \icstep{m}(I_2), \dots, \icstep{2}(I_2), 0, \icstep{n}(I_1), \dots, \icstep{2}(I_1) \right)$.
\end{itemize}

If $\size(I_1)= 0$ then $I_1 \pleft I_2 = I_2$ and $\IC(I_1 \pleft I_2) = \IC(I_2)$. If $\size(I_2)= 0$ then $I_1 \pright{i} I_2 = I_1$ and $\IC(I_1 \pright{i} I_2) = \IC(I_1)$.
\end{Proposition}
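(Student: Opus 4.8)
The plan is to exploit the observation made just before the statement: both $\risesV$ and $\IC$ depend only on the increasing relations of an interval-poset, that is, on its initial forest. So for each grafting I would first determine the initial forest of the result and only then read off the vector. The single tool I need is Remark~\ref{rem:adding-decreasing}: adding a decreasing (resp.\ increasing) relation to an interval-poset produces, by transitivity, only decreasing (resp.\ increasing) relations.

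The right grafting is the easy case, and I would dispatch it first. The relations $y_i \trprec \omega$ added in $I_1 \pright{i} I_2$ satisfy $y_i > \omega$, hence are decreasing; by Remark~\ref{rem:adding-decreasing} the decreasing-extension they generate creates no new increasing relation. Therefore the initial forest of $I_1 \pright{i} I_2$ is exactly the shifted disjoint concatenation of the initial forests of $I_1$ and $I_2$, and in particular it is completely independent of $i$ (which is why no $i$ appears in the formula). The increasing roots are the union of those of the two blocks, giving $\icinf(I_1 \pright{i} I_2) = \icinf(I_1) + \icinf(I_2)$; each vertex keeps precisely the increasing children it had in its own block; and the shifted minimum $n+1$ of $I_2$ has none (being the minimum of $I_2$), which supplies the $0$ entry. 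Reading the vertices $n+m, \dots, n+1, n, \dots, 2$ in this order then yields the claimed vector.

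The left grafting is the substantive case. Here the added relations $y \trprec \alpha$ (with $\alpha = n+1$ the shifted minimum of $I_2$) are increasing, so by Remark~\ref{rem:adding-decreasing} we remain within an increasing-extension. First I would check that nothing changes away from $\alpha$: no increasing relation internal to $I_2$ is affected and no vertex of $I_2$ gains a new outgoing increasing edge, because every added edge and every edge obtained from it by transitivity targets $\alpha$ or a vertex above it; and no vertex $b \le n$ of $I_1$ acquires a new incoming increasing cover, since any transitive increasing relation into $I_1$'s block would have to pass through $\alpha > b$. This gives at once that the increasing children of every vertex $b \ne \alpha$ are unchanged, that the increasing roots of $I_2$ survive, and that every vertex of $I_1$ now has the outgoing edge to $\alpha$ and so is no longer a root; hence $\icinf(I_1 \pleft I_2) = \icinf(I_2)$.

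The heart of the argument, and the only step I expect to be delicate, is computing the increasing children of $\alpha = n+1$ in $I_1 \pleft I_2$. I would argue that $y \trprec \alpha$ is a cover relation precisely when there is no $z$ with $y \trprec z \trprec \alpha$; since such a $z$ must lie in $I_1$ and satisfy $y \trprec z$ with $z > y$, this fails exactly when $y$ has no increasing relation to a larger vertex of $I_1$, i.e.\ exactly when $y$ is an increasing root of $I_1$. Thus the increasing children of $\alpha$ are exactly the increasing roots of $I_1$, so $\icstep{n+1}(I_1 \pleft I_2) = \icinf(I_1)$. Collecting the contributions and reading the vertices in decreasing order assembles $\IC(I_1 \pleft I_2)$ into the stated form, with $\icinf(I_1)$ landing precisely at the position of $\icstep{n+1}$ between the $I_2$-block and the $I_1$-block. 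The two degenerate cases $\size(I_1) = 0$ and $\size(I_2) = 0$ are immediate from the definitions of the grafting operations.
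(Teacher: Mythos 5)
Your proof is correct and follows essentially the same route as the paper's: in both cases one observes that the right grafting adds only decreasing relations (so the initial forests are merely concatenated), while the left grafting attaches the increasing roots of $I_1$ as the increasing children of the new root $\alpha$, leaving everything else unchanged. Your treatment is somewhat more detailed than the paper's, in particular the explicit cover-relation analysis showing that the increasing children of $\alpha$ are exactly the increasing roots of $I_1$, but the underlying argument is the same.
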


This can be checked on Figure~\ref{fig:grafting}. We initially have $\IC(I_1) = (2,1,0)$ and $\IC(I_2) = (3,0,0)$, and then $\IC(I_1 \pleft I_2) = (3,0,0,2,1,0)$ and $\IC(I_1 \pright{i} I_2) = (5, 0, 0, 0, 1, 0)$ for all $1 \leq i \leq 2$. 

\begin{proof}
When we compute $I_1 \pleft I_2$, we add increasing relations from all vertices of $I_1$ to the first vertex $\alpha$ of the shifted copy of $I_2$. In other words, we attach all increasing roots of $I_1$ to a new root $\alpha$. The number of components in the initial forest of $I_1 \pleft I_2$ is then given by $\icinf(I_2)$ (the last component contains $I_1$) and the number of increasing children of $\alpha$ is given by $\icinf(I_1)$. Other number of increasing children are left unchanged and we thus obtain the expected vector.

In the computation of $I_1 \pright{i} I_2$, the value of $i$ only impacts the decreasing relations and thus does not affects the vector $\IC$. No increasing relation is added, which means that the initial forests of $I_1$ and $I_2$ are only concatenated and by looking at connected components, we obtain $\icinf(I_1 \pright{i} I_2) = \icinf(I_1) + \icinf(I_2)$. The vector $\IC$ is formed by this initial value followed by the truncated initial forest vector of $I_2$, then an extra 0, which correspond to $\icstep{1}(I_2)$, then the truncated initial forest vector of $I_1$. 
\end{proof}

To understand how the rise vector behaves through the grafting operations, we first need to interpret the grafting on the upper bound Dyck path of the interval. We start with the left grafting.

\begin{Proposition}
\label{prop:grafting-rise-left}
Let $I_1$ and $I_2$ be two interval-posets of respective sizes $n > 0$ and $m >0$. Let $D_1$ and $D_2$ be their respective upper bound Dyck path. Then, the upper-bound Dyck path of $I_1 \pleft I_2$ is given by $D_1D_2$ and consequently, if $\size(I_1) > 0$, we get
\begin{itemize}
\item $\rises(I_1 \pleft I_2) = \rises(I_1)$;
\item $\risesV(I_1 \pleft I_2) = (\rises(I_1), \risesStep{1}(I_1), \dots, \risesStep{n-1}(I_1), \rises(I_2), \risesStep{1}(I_2), \dots, \risesStep{m-1}(I_2))$.
\end{itemize}
\end{Proposition}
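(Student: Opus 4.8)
The plan is to reduce everything to a single geometric fact: the upper bound Dyck path of $I_1 \pleft I_2$ is the concatenation $D_1 D_2$. Once this is established, the two displayed formulas follow immediately by reading the initial rise and the rises after each down-step directly off the concatenated path. Recall from the interval-poset / Tamari interval bijection of \cite{IntervalPosetsInitial} that the upper bound of an interval-poset is determined entirely by its initial forest; since $\inc \circ \tree$ is a bijection between Dyck paths and initial forests, to prove the concatenation claim it suffices to show that $\inc(I_1 \pleft I_2)$ coincides with the initial forest produced from $D_1 D_2$ by the direct construction of Proposition~\ref{prop:dyck-inc-forest}.

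To carry this out, I would label the up-steps of $D_1 D_2$ so that $1, \dots, n$ are the up-steps of $D_1$ and $n+1, \dots, n+m$ those of $D_2$, and then run the algorithm of Proposition~\ref{prop:dyck-inc-forest}. For an up-step of $D_2$, its corresponding down-step and the first up-step following it both lie inside $D_2$ (or no such up-step exists), so prepending $D_1$ changes nothing, and the forest induced on $\{n+1, \dots, n+m\}$ is exactly the shifted copy of $\inc(I_2)$. For an up-step $a$ of $D_1$ that is \emph{not} an increasing root of $\inc(I_1)$, the first up-step after its down-step already lies inside $D_1$, so its edge is unchanged. Finally, an up-step $a$ of $D_1$ is an increasing root of $\inc(I_1)$ precisely when no up-step of $D_1$ follows its corresponding down-step; in $D_1 D_2$ the first up-step after that down-step is then the first up-step of $D_2$, namely $\alpha = n+1$, so $a$ acquires the edge $a \longrightarrow \alpha$. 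This is exactly the set of cover relations added by the left grafting, which attaches every increasing root of $I_1$ to $\alpha$, so the two initial forests agree and the upper bound of $I_1 \pleft I_2$ is $D_1 D_2$.

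With the concatenation in hand, the rise statements are routine bookkeeping. In $D_1 D_2$ the first $n$ down-steps in order of appearance are those of $D_1$ and the last $m$ are those of $D_2$. The initial rise of $D_1 D_2$ is the initial rise of $D_1$, giving $\rises(I_1 \pleft I_2) = \rises(I_1)$. For $1 \le i \le n-1$ the $i$-th down-step is interior to $D_1$, so the up-steps immediately after it remain within $D_1$ and $\risesStep{i}(D_1 D_2) = \risesStep{i}(I_1)$. The $n$-th down-step is the final step of $D_1$, and the up-steps following it are precisely the initial up-steps of $D_2$, so $\risesStep{n}(D_1 D_2) = \rises(I_2)$. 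For $1 \le j \le m-1$ the $(n+j)$-th down-step is the $j$-th down-step of $D_2$ and is interior to it, giving $\risesStep{n+j}(D_1 D_2) = \risesStep{j}(I_2)$. Assembling these values yields exactly the stated vector.

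The main obstacle is the middle step: correctly pinning down which up-steps of $D_1$ receive a new edge under concatenation. The delicate point is the equivalence between \emph{``$a$ is an increasing root of $\inc(I_1)$''} and \emph{``no up-step of $D_1$ follows the down-step corresponding to $a$''}, together with the observation that for such $a$ the next up-step in $D_1 D_2$ is necessarily $\alpha$ (this is where $m > 0$ is used). Everything else — the preservation of the interior edges and the translation of the concatenated path into the rise vector — is direct verification once the ordering of the down-steps of $D_1 D_2$ is made explicit.
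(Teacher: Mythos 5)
Your proposal is correct and follows essentially the same route as the paper's proof: both reduce the claim to identifying the initial forest of $I_1 \pleft I_2$ with the initial forest of the concatenation $D_1 D_2$ via Proposition~\ref{prop:dyck-inc-forest}, with the key observation that the increasing roots of $I_1$ are exactly the up-steps of $D_1$ whose corresponding down-steps have no following up-step, so that concatenation attaches precisely these roots to the first up-step of $D_2$. Your write-up is somewhat more explicit than the paper's (spelling out the three cases and the down-step bookkeeping), but the underlying argument is the same.
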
 

\begin{figure}[ht]
\input{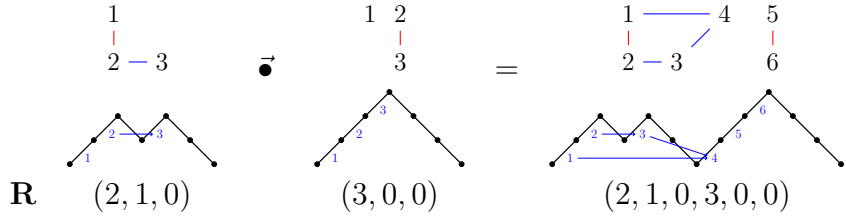}
\caption{The upper bound Dyck paths in the left grafting}
\label{fig:grafting-rise-left}
\end{figure}

Figure~\ref{fig:grafting-rise-left} gives an example of left-grafting with corresponding upper bound Dyck paths and rise vectors.

\begin{proof}
The definition of $I_1 \pleft I_2$ states that we add all relations $(i,\alpha)$ with $i \in I_1$ and $\alpha$ the first vertex of $I_2$. This is the same as adding all relations $(i, \alpha)$ where $i$ is an increasing root of $I_1$ (the other relations are obtained by transitivity). The increasing roots of $I_1$ correspond to the up-steps of $D_1$ whose corresponding down-steps do not have a following up-step, \emph{i.e.}, the up-steps corresponding to final down-steps of $D_1$. By concatenating $D_1$ and $D_2$, the first up-step of $D_2$ is now the first following up-step of the final down-steps of $D_1$: this indeed adds the relations from the increasing roots of $I_1$ to the first vertex of $I_2$. The expressions for the initial rise and rise vectors follow immediately by definition.
\end{proof}

The effect of the right grafting on the rise vector is a bit more technical. For simplicity, we only study the case where $I_1$ is of size one, which is the only case we will need in this paper.

\begin{Proposition}
\label{prop:grafting-rise-right}
Let $I$ be an interval-poset of size $n >0$ and $D$ its upper bound Dyck path. Let $u$ be the only interval-poset on a single vertex. Note that the upper bound Dyck path of $u$ is given by the word $10$. Then, the upper bound Dyck path of $u \pright{i} I$ is $1D0$ for all $0 \leq i \leq \contacts(I)$, and we have
\begin{itemize}
\item $\rises(u \pright{i} I) = \rises(I) + 1$;
\item $\risesV(u \pright{i} I) = (\rises(I) + 1, \risesStep{1}(I), \dots, \risesStep{n-1}(I), \risesStep{n}(I) = 0)$.
\end{itemize} 

\end{Proposition}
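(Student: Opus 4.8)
The plan is to reduce everything to the upper bound Dyck path, which is insensitive to the parameter $i$, and then read the rise statistics off it. The right grafting $u \pright{i} I$ only adds the decreasing relations $y_j \trprec \omega$ (here $\omega$ is the single vertex $1$ of $u$) for $1 \le j \le i$; it never creates an increasing relation. Hence the initial forest $\inc(u \pright{i} I)$ is the same for every $i$: it consists of the shifted initial forest of $I$ on the vertices $2, \dots, n+1$, together with the isolated vertex $1$, which carries no increasing relation (none is added by the right grafting, and $u$ itself has none). Moreover, raising $i$ by one adjoins a single decreasing relation, so $u \pright{i+1} I$ is a decreasing-extension of $u \pright{i} I$; by Proposition~\ref{prop:interval-poset-extension} all of them share the same upper bound, hence the same upper bound Dyck path. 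Since rises are read on the upper bound (Definition~\ref{def:contact-rise-intervals}), it suffices to identify this path once and compute $\rises$ and $\risesV$ on it.

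To identify the upper bound I would pass through binary trees. Let $T = \tree(D)$ be the upper bound tree of $I$. The path $1D0$ stays at height $\geq 1$ throughout $D$ and returns to $0$ only at its final step, so it is primitive; its decomposition in the sense of Definition~\ref{def:dyck-tree} is $1\,D\,0$ with empty left factor, whence $\tree(1D0) = \bullet(\emptyset, T)$. Labelling this tree in in-order, the root receives label $1$ with empty left subtree, and the nodes of $T$ receive labels $2, \dots, n+1$. Its initial forest is then immediate: the root lies in no left subtree and has empty left subtree, so it is isolated, while the left-subtree relations among the nodes of $T$ are unchanged by placing $T$ as the right subtree of the root, reproducing the shifted $\inc(I)$. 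Thus $\inc(\bullet(\emptyset,T)) = \inc(u \pright{i} I)$, and since $\inc$ is a bijection on trees, $1D0$ is exactly the upper bound Dyck path of $u \pright{i} I$. One could instead argue directly on the path via Proposition~\ref{prop:dyck-inc-forest}: wrapping $D$ as $1D0$ prepends one up-step and appends one down-step, the prepended up-step matches the terminal down-step and so is an increasing root, and every up-step of $D$ keeps the same following up-step, the increasing roots of $D$ remaining roots because the new terminal down-step is followed by no up-step.

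With the upper bound Dyck path known to be $1D0$, the rise formulas follow by direct reading. The path begins with one up-step followed by the initial rise of $D$, so $\rises(u \pright{i} I) = \rises(D) + 1 = \rises(I) + 1$. For the rise vector, the down-steps of $1D0$ are, in order, the $n$ down-steps of $D$ followed by the terminal down-step. For $1 \le j \le n-1$ the $j$-th down-step lies in the unchanged interior of $D$, so the up-steps immediately following it are exactly those in $D$ and $\risesStep{j}(u \pright{i} I) = \risesStep{j}(I)$. The $n$-th down-step of $D$ is the last step of $D$, immediately followed in $1D0$ by the terminal down-step, so no up-step follows it and the corresponding entry is $0$. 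This yields $\risesV(u \pright{i} I) = (\rises(I)+1, \risesStep{1}(I), \dots, \risesStep{n-1}(I), 0)$, as claimed.

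The only delicate point is the middle step: one must be certain that wrapping $D$ into $1D0$ neither merges nor splits any component of the initial forest beyond adjoining the new isolated root. The clean way to see this is the tree identity $\tree(1D0) = \bullet(\emptyset, T)$, which makes the empty left subtree transparent and thereby exhibits both the isolated vertex $1$ and the untouched initial forest of $I$ at once. Everything else is a routine reading of the definitions of $\rises$ and $\risesV$ on $1D0$.
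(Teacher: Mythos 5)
Your proof is correct and follows essentially the same route as the paper: both observe that the right grafting adds no increasing relation, so the initial forest of $u \pright{i} I$ is the isolated vertex $1$ together with the shifted initial forest of $I$, which forces the upper bound Dyck path to be $1D0$, from which the rise statistics are read off. The only cosmetic difference is that you certify the upper bound via the tree identity $\tree(1D0) = \bullet(\emptyset, T)$ and injectivity of $\inc$, whereas the paper argues the nesting directly on the path via Proposition~\ref{prop:dyck-inc-forest} --- an alternative you yourself sketch.
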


\begin{figure}[ht]
\input{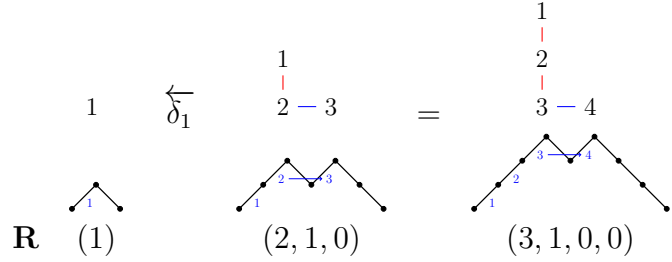}
\caption{The upper bound Dyck paths in the right grafting}
\label{fig:grafting-rise-right}
\end{figure}

Figure~\ref{fig:grafting-rise-right} gives an example of right-grafting with corresponding upper bound Dyck paths and rise vectors.

\begin{proof}
The right-grafting only adds decreasing relations. On the initial forests, it is then nothing but a concatenation of the two initial forests. In particular, in the case of $u \pright{i} I$, no increasing relation is added from the vertex one to any vertex of $I$. On the upper bound Dyck path, this means that the down-step corresponding to the initial up-step is not followed by any up-step: the Dyck path of $I$ has to be nested into this initial up-step. The expressions for the rise vector follow immediately.
\end{proof}

\begin{Remark}
\label{rem:right-grafting-rise-ic}
When applying a right-grafting on $u$, the interval-poset of size 1, the rise vector and the initial forest vector have similar expressions:

\begin{align}
\icinf(u \pright{i} I_2) &= 1 + \icinf(I_2); \\
\rises(u \pright{i} I_2) &= 1 + \rises(I_2); \\
\IC(u \pright{i} I_2) &= \left( 1 + \icinf(I_2), \IC^*(I_2), 0 \right); \\
\risesV(u \pright{i} I_2) &= \left( 1 + \rises(I_2), \risesV^*(I_2), 0 \right).
\end{align}

This will be a fundamental property when we define our involutions. Note also that if $\size(I_2) = 0$, we have $\icinf(u \pright{i} I_2) = \rises(u \pright{i} I_2) = 1$ and $\IC(u \pright{i} I_2) = \risesV(u \pright{i} I_2) = 1$.
\end{Remark}

Now, the only statistic which is left to study through the grafting operations is the distance. Recall that by Proposition~\ref{prop:tamari-inversions}, it is given by the number of Tamari inversions. In the same way as for the $\risesV$ vector, it is more complicated to study on the right grafting in which case, we will restrict ourselves to $\size(I_1) = 1$.

\begin{Proposition}
\label{prop:grafting-distance}
Let $I_1$ and $I_2$ be two interval-posets, and $u$ be the interval-poset of size one. Then
\begin{itemize}
\item $\distance(I_1 \pleft I_2) = \distance(I_1) + \distance(I_2)$,
\item $\distance(u \pright{i} I_2) = \distance(I_2) + \contacts(I_2) - i$.
\end{itemize}
\end{Proposition}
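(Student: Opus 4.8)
The plan is to translate both identities into statements about Tamari inversions via Proposition~\ref{prop:tamari-inversions}, so that computing a distance becomes counting the pairs $(a,b)$ satisfying Definition~\ref{def:tamari-inversions} in the grafted poset. Write $n = \size(I_1)$ and $m = \size(I_2)$, so that the shifted copy of $I_2$ occupies the vertices $n+1, \dots, n+m$ (or $2, \dots, m+1$ when $I_1 = u$). First I would record the structural fact I expect to reuse throughout: in either grafting, no relation is created \emph{inside} the vertex set of $I_1$, nor \emph{inside} the vertex set of $I_2$. For the left grafting this is because the added relations all point from vertices of $I_1$ up to $\alpha$, and $\alpha$, being minimal in $I_2$, has no decreasing relation down into $I_1$, so transitivity produces only relations running from $I_1$ into $I_2$. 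For the right grafting with $I_1 = u$, the added relations all point from $I_2$ down to the single vertex $1$, which has no outgoing relation, so transitivity produces no new relation among the $I_2$-vertices. Consequently, for a pair $(a,b)$ lying entirely inside one block, the two conditions of Definition~\ref{def:tamari-inversions} are tested only against relations of the original component, and $(a,b)$ is a Tamari inversion of the grafting if and only if the corresponding shifted pair is a Tamari inversion of $I_1$ or of $I_2$.

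For the left grafting I would partition $\TInv(I_1 \pleft I_2)$ according to whether both endpoints lie in $I_1$, both lie in $I_2$, or $a \in I_1$ and $b \in I_2$. The fact above shows the within-$I_1$ pairs are exactly $\TInv(I_1)$ and the within-$I_2$ pairs are exactly $\TInv(I_2)$. It then remains to eliminate the mixed case: if $a \le n < b$, take $k = \alpha = n+1$, so that $a < k \le b$ and $a \trprec k$ (this relation is added, by definition of $\pleft$, for every vertex of $I_1$); the second condition of Definition~\ref{def:tamari-inversions} fails, hence $(a,b)$ is never a Tamari inversion. Summing the three contributions gives $\distance(I_1 \pleft I_2) = \distance(I_1) + \distance(I_2)$.

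For the right grafting $u \pright{i} I_2$ the within-$I_2$ pairs again contribute exactly $\TInv(I_2)$, that is $\distance(I_2)$, so the content lies in the pairs with $a = 1$. Here the second condition of Definition~\ref{def:tamari-inversions} is automatic, since $1$ has no outgoing relation; thus $(1,b)$ is a Tamari inversion precisely when there is no $k < b$ with $b \trprec k$, that is, precisely when $b$ is a decreasing root of $u \pright{i} I_2$. The number of such pairs is therefore the number of decreasing roots different from $1$. To evaluate it I would invoke Proposition~\ref{prop:contact-dc} together with Proposition~\ref{prop:grafting-contact}: the total number of decreasing roots equals $\contacts(u \pright{i} I_2) = \contacts(u) + \contacts(I_2) - i = 1 + \contacts(I_2) - i$, and discarding the root $1$ leaves $\contacts(I_2) - i$. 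Adding the two contributions yields $\distance(u \pright{i} I_2) = \distance(I_2) + \contacts(I_2) - i$.

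The steps needing the most care — and where I expect the only genuine obstacle — are the two ``no new internal relation'' claims, since they rely on correctly tracking the transitive closure of the added relations, exactly the bookkeeping carried out in Remark~\ref{rem:adding-decreasing}. Once these are in place, the identification of the within-block Tamari inversions and the automatic vanishing of the mixed pair (left grafting) and of the second condition (right grafting) are immediate, and the only quantitative input is the already-established contact formula of Proposition~\ref{prop:grafting-contact}.
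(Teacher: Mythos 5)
Your proof is correct and follows essentially the same route as the paper: both reduce distance to counting Tamari inversions via Proposition~\ref{prop:tamari-inversions}, observe that within-block inversions are preserved because no new relation is created inside either block, kill the mixed pairs in the left grafting using the added relation $a \trprec \alpha$, and then count the new pairs $(1,b)$ in the right grafting. The only cosmetic difference is in that last count: you identify the pairs $(1,b)$ with the decreasing roots of the grafted poset and invoke Propositions~\ref{prop:contact-dc} and~\ref{prop:grafting-contact} to get $1 + \contacts(I_2) - i$ roots (then discard the root $1$), whereas the paper identifies them with the decreasing roots of $I_2$ not attached to $1$ and reads off $\contacts(I_2) - i$ directly from the definition of $\pright{i}$ --- the same count.
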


Look for example at Figure~\ref{fig:grafting-rise-left}: the Tamari inversion $(1,3)$ of $I_1$ and $(1,2)$ of $I_2$ are kept through $I_1 \pleft I_2$ and no other Tamari inversion is added. For the right grafting, you can look at Figure~\ref{fig:grafting-rise-right}: the interval-poset $I_2$ only has one Tamari inversion $(1,3)$ and we have $\contacts(I_2) = 2$. You can check that $\distance( u \pright{1} I_2) = 2 = 1 + 2 - 1$, the two Tamari inversions being $(2,4)$ and $(1,4)$.

\begin{proof}
We first prove $\distance(I_1 \pleft I_2) = \distance(I_1) + \distance(I_2)$. The condition for a couple $(a,b)$ to be a Tamari inversion is local: it depends only on the values $a \leq k \leq b$. Thus, because the local structure of $I_1$ and $I_2$ is left unchanged, any Tamari inversion of $I_1$ and $I_2$ is kept in $I_1 \pleft I_2$. Now, suppose that $a \in I_1$ and $b \in I_2$. Let $\alpha$ be the label of minimal value in $I_2$ (which has been shifted by the size of $I_1$). By definition, we have $a < \alpha \leq b$ and $a \trprec \alpha$ in $I_1 \pleft I_2$: $(a,b)$ is not a Tamari inversion.

Now, let $I = u \pright{i} I_2$ with $0 \leq i \leq \contacts(I_2)$ and let us prove that $\distance(I) = \distance(I_2) + \contacts(I_2) - i$. Once again, note that the Tamari inversions of $I_2$ are kept through the right grafting. For the same reason, the only Tamari inversions that could be added are of the form $(1,b)$ with $b \in I_2$. Now, let $b$ be a vertex of $I_2$ which is not a decreasing root. This means there is $a < b$ with $b \trprec_{I_2} a$. In $I$, the interval-poset $I_2$ has been shifted by one and so we have: $1 < a < b$ with $b \trprec_I a$: $(1,b)$ is not a Tamari inversion of $I$. Let $b$ be a decreasing root of $I_2$. If $b \trprec_{I} 1$ then $(1,b)$ is not a Tamari inversion. If $b \ntrprec_{I} 1$, we have that: by construction, there is no $a \in I_2$ with $1 \trprec_I a$; because $b$ is a decreasing root there is no $a \in I_2$ with $a < b$ and $b \trprec a$. In other words, $(1,b)$ is a Tamari inversion of $I$ if and only if $b$ is a decreasing root of $I_2$ and $b \ntrprec_I 1$. By the definition of $\pright{i}$ there are exactly $\contacts(I_2) - i$ such vertices.
\end{proof}

\subsection{Grafting trees}
\label{sec:decomposition} 

\begin{Proposition}
\label{prop:decomposition}
An interval-poset $I$ of size $n \geq 1$ is fully determined by a unique triplet $(I_L, I_R, r)$ with $0 \leq r \leq \contacts(I_R)$ and $\size(I_L) + size(I_R) + 1 = n$ such that $I = I_L \pleft u \pright{r} I_R$ with $u$ the unique interval-poset of size 1. We call this triplet the \emph{grafting decomposition} of $I$. See an example on Figure~\ref{fig:grafting-decomposition}.
\end{Proposition}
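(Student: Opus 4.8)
The plan is to pin down, inside an arbitrary interval-poset $I$ of size $n$, the single vertex on which $u$ must be grafted, and then to show that this vertex together with the two induced sub-posets on the smaller and larger vertices reconstructs $I$ through the grafting operations. The candidate is $p := \min\{\,b : b \text{ is an increasing root of } I\,\}$, which exists since the largest vertex $n$ is always an increasing root. I would set $I_L := I|_{\{1,\dots,p-1\}}$, let $I_R$ be $I|_{\{p+1,\dots,n\}}$ relabelled onto $\{1,\dots,n-p\}$, and $r := \dcstep{p}(I)$, the number of decreasing children of $p$. Restrictions of an interval-poset to a set of vertices are again interval-posets (the Tamari axiom only constrains triplets, so it survives passage to an induced subposet), hence $I_L$ and $I_R$ are legitimate, and the sizes obviously add up to $n$.

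Uniqueness is the easy half and follows purely from the grafting definitions. In any expression $I = I_L \pleft u \pright{r} I_R$ the vertex $u$ sits at position $p = \size(I_L)+1$; the left grafting adds $a \trprec p$ for every $a \in I_L$, so no $a < p$ is an increasing root, while neither $\pright{r}$ (which adds only decreasing relations) nor $\pleft$ ever creates an increasing relation out of $p$, so $p$ is itself an increasing root. Thus $p$ is forced to be the smallest increasing root of $I$, which fixes $\size(I_L)$; then $I_L$ and $I_R$ are forced to be the two induced sub-posets and $r = \dcstep{p}(I)$ is forced. So at most one triplet can work.

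Existence requires checking that the relations of $I$ are exactly those produced by $\pleft$ and $\pright{r}$, and this is where the Tamari axiom does the work through three claims. First, every $a < p$ satisfies $a \trprec p$: choosing an increasing root $\rho$ above $a$ in the initial forest, so $a \trprec \rho$ and $\rho \ge p$ by minimality of $p$, if $\rho > p$ then the Tamari axiom on $a < p < \rho$ turns $a \trprec \rho$ into $p \trprec \rho$, contradicting that $p$ is an increasing root, whence $\rho = p$; moreover $p$ is then a decreasing root, since $p \trprec a$ for some $a<p$ together with $a \trprec p$ would break antisymmetry. Second, no vertex $a < p$ relates to a vertex $b > p$: from $a \trprec p$, an increasing $a \trprec b$ would force $p \trprec b$ and a decreasing $b \trprec a$ would force $p \trprec a$ (Tamari on $a<p<b$), each contradicting that $p$ is an increasing, resp. decreasing, root. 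Third, since $p$ is an increasing root every relation between $p$ and a larger $b$ is decreasing, and the decreasing children of $p$ form an initial segment $y_1 < \dots < y_r$ of the decreasing roots of $I_R$: a decreasing child $b$ of $p$ is a decreasing root of $I_R$ (any $p < a' < b$ with $b \trprec a'$ would, by Tamari on $p<a'<b$, give $a' \trprec p$ and destroy the cover $b \trprec p$), and this set is downward closed among the decreasing roots (if $y$ qualifies and $y'$ is a smaller decreasing root, Tamari on $p<y'<y$ gives the cover $y' \trprec p$). These three facts match exactly the relations inserted by $\pleft$ and $\pright{r}$, their transitive closures coincide, and the internal relations of $I_L$ and $I_R$ are untouched, so $I = I_L \pleft u \pright{r} I_R$; the bound $0 \le r \le \contacts(I_R)$ holds because $r$ counts some decreasing roots of $I_R$ while, by Proposition~\ref{prop:contact-dc}, $\contacts(I_R)$ counts all of them.

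The main obstacle is the existence direction, and within it the third claim in particular: one must verify not merely that the relations from $p$ to the larger vertices are all decreasing and carried by decreasing roots of $I_R$, but that these roots form an \emph{initial segment} in the value order, which is precisely what the parameter $r$ of $\pright{r}$ encodes. This is the point where the Tamari axiom is invoked in its ``$c \trprec a \Rightarrow b \trprec a$'' form and where the cover-relation subtlety must be handled carefully; the remaining verifications reduce to bookkeeping against the grafting definitions.
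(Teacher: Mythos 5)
Your proof is correct, and it takes a genuinely different route from the paper's. The paper gives no structural argument at all for this proposition: it simply observes that the statement is a reformulation of Proposition~3.7 of \cite{IntervalPosetsInitial}, where it was shown that every interval-poset of size $n$ appears exactly once among the compositions $\mathbb{B}(I_L, I_R) = \sum_{0 \leq r \leq \contacts(I_R)} I_L \pleft u \pright{r} I_R$ with $\size(I_L)+\size(I_R)+1=n$, the parameter $r$ merely identifying the summand. You instead prove existence and uniqueness from first principles: you pin down the position of the grafted vertex as $p$, the minimal increasing root of $I$, then use the Tamari axiom to show that every $a<p$ satisfies $a \trprec p$ (so $p$ is also a decreasing root), that no relation joins $\{1,\dots,p-1\}$ to $\{p+1,\dots,n\}$, and that the decreasing relations into $p$ are generated by an initial segment of the decreasing roots of $I_R$ --- exactly the data recorded by $r$. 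Your verifications are sound, including the cover-relation subtleties in your third claim and the bound $r \leq \contacts(I_R)$ via Proposition~\ref{prop:contact-dc}. What your route buys is self-containedness and an effective description of the decomposition: your identities $\size(I_L)=p-1$ and $r=\dcstep{p}(I)$ are essentially the content of Proposition~\ref{prop:grafting-direct}, which the paper establishes separately and only after this proposition. What the paper's route buys is brevity and consistency with \cite{IntervalPosetsInitial}, where the composition $\mathbb{B}$ and its uniqueness property were already established.
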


\begin{Remark}~
\begin{itemize}
\item It can easily be checked that the operation $I_L \pleft u \pright{r} I_R$ is well defined as we have $(I_L \pleft u) \pright{r} I_R = I_L \pleft (u \pright{r} I_R)$. Indeed  $I_L \pleft u$ adds increasing relations from $I_L$ to $u$ while $u \pright{r} I_R$ adds decreasing relation from $I_R$ to $u$. The two operations are independent of each other, see an example on Figure~\ref{fig:grafting-decomposition}. In practice, we think of it as $I_L \pleft (u \pright{r} I_R)$.
\item One or both of the intervals in the decomposition can be empty (of size 0). In particular, the decomposition of $u$ is the triplet $(\emptyset, \emptyset, 0)$.
\end{itemize}
\end{Remark}

\begin{figure}[ht]
\input{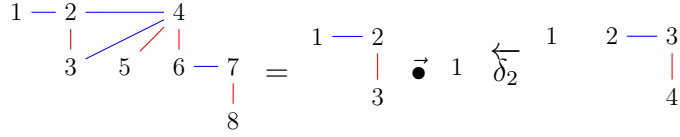}
\caption{Grafting decomposition of an interval-poset}
\label{fig:grafting-decomposition}
\end{figure}

\begin{proof}
This is only a reformulation of \cite[Prop. 3.7]{IntervalPosetsInitial}. Indeed, it was proved that each interval-poset $I$ of size $n$ uniquely appeared in one \emph{composition} $\mathbb{B}(I_L, I_R)$ of two interval-posets where we had $\size(I_L) + \size(I_R) + 1 = n$ and 
\begin{align*}
\mathbb{B}(I_L, I_R) = \sum_{0 \leq i \leq \contacts(I_R)} I_L \pleft u \pright{r} I_R.
\end{align*}

The parameter $r$ identifies which element is $I$ in the composition sum.
\end{proof}

\begin{Definition}
\label{def:grafting-tree}
Let $T$ be a binary tree of size $n$. We write $v_1, \dots, v_n$ the nodes of $T$ taken in in-order (following the binary search tree labeling). Let $\ell : \lbrace v_1, \dots, v_n \rbrace \rightarrow \NN$ be a labeling function on $T$. For all subtrees $T'$ of $T$, we write $\size(T')$ the size of the subtree and $\labels(T') := \sum_{v_i \in T'} \ell(v_i)$ the sum of the labels of its nodes. 

We say that $(T,\ell)$ is a \emph{Tamari interval grafting tree}, or simply \emph{grafting tree} if the labeling $\ell$ satisfies that for every node $v_i$, we have $\ell(v_i) \leq \size(T_R(v_i)) - \labels(T_R(v_i))$ where $T_R(v_i)$ is the right subtree of the node $v_i$.
\end{Definition}

An example is given in Figure~\ref{fig:grafting-tree}: the vertices $v_1, \dots, v_8$ are written in red above the nodes, whereas the labeling $\ell$ is given inside the nodes. For example, you can check the rule on the root $v_4$, we have $\size(T_R(v_4)) - \labels(T_R(v_4)) = 4 - 1 = 3$ and indeed $\ell(v_4) = 2 \leq 3$. The rule is satisfied on all nodes. Note that if the right subtree of a node is empty (which is the case for $v_1$, $v_3$, $v_6$, and $v_8$) then the label is always 0.

\begin{figure}[ht]
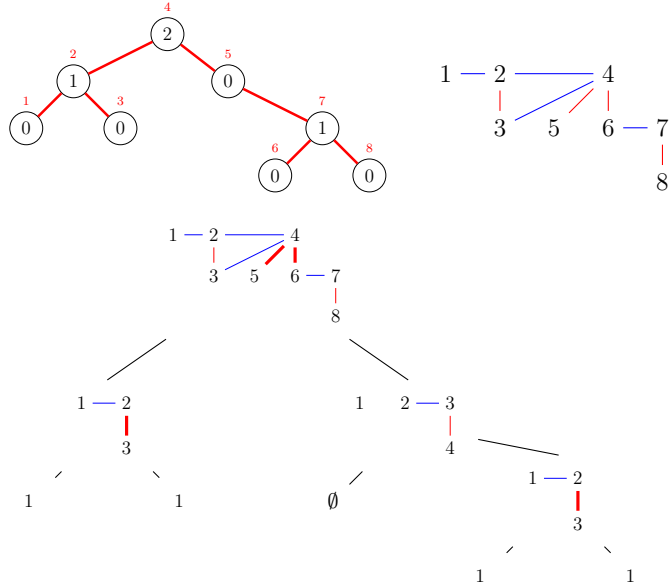

\begin{tabular}{cc}
\scalebox{0.6}{
\input{figures/grafting_tree_example}
}
&
\scalebox{0.8}{\input{figures/interval-posets/I8-ex3}}
\end{tabular}

\input{figures/grafting-tree-construct}

\caption{Example of grafting tree with corresponding interval-poset and grafting decomposition.}
\label{fig:grafting-tree}
\end{figure}

\begin{Proposition}
\label{prop:grafting-tree}
Intervals of the Tamari lattice are in bijection with grafting trees. The grafting tree of an interval-poset $I$ is written as~$\graftingTree(I)$. We compute $\graftingTree(I) = (T, \ell)$ recursively as follows
\begin{itemize}
\item if $I = \emptyset$, then $T$ is the empty binary tree;
\item if $\size(I) > 0$ and $(I_L, I_R, r)$ is the grafting decomposition of $I$, such that $\graftingTree(I_L) = (T_L, \ell_L)$ and $\graftingTree(I_R) = (T_R, \ell_R)$, then $T = \bullet(T_L, T_R)$ and $\ell$ is constructed by keeping unchanged the labels of $T_L$ and $T_R$ given by $\ell_L$ and $\ell_R$ and for the new root $v$ of $T$, $\ell(v) = r$.
\end{itemize}
Besides, 
\begin{align}
\label{eq:grafting-tree-contact}
\contacts(I) = \size(\graftingTree(I)) - \labels(\graftingTree(I)).
\end{align}
\end{Proposition}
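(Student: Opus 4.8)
The plan is to prove both assertions together by induction on $n = \size(I)$, because the contact formula~\eqref{eq:grafting-tree-contact} is exactly what will certify that the recursive construction produces a \emph{valid} grafting tree. First I would dispatch the base case $n = 0$: the only interval-poset is $\emptyset$, it is sent to the empty tree, and both sides of~\eqref{eq:grafting-tree-contact} are $0$.

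For the inductive step, take $\size(I) = n \geq 1$ and write its grafting decomposition $(I_L, I_R, r)$ from Proposition~\ref{prop:decomposition}, so that $I = I_L \pleft u \pright{r} I_R$ with $0 \leq r \leq \contacts(I_R)$. By induction, $\graftingTree(I_L) = (T_L, \ell_L)$ and $\graftingTree(I_R) = (T_R, \ell_R)$ are grafting trees satisfying~\eqref{eq:grafting-tree-contact}; in particular $\contacts(I_R) = \size(T_R) - \labels(T_R)$. I would then check that $(T, \ell) = \graftingTree(I)$ is a valid grafting tree: the right subtree of the new root $v$ is $T_R$, so the labeling condition at $v$ reads $\ell(v) = r \leq \size(T_R) - \labels(T_R)$, which by the inductive formula is the very inequality $r \leq \contacts(I_R)$ guaranteed by Proposition~\ref{prop:decomposition}; at all other nodes the condition is inherited unchanged from $T_L$ and $T_R$.

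Next I would verify~\eqref{eq:grafting-tree-contact} for $I$ itself. Using $\size(T) = \size(T_L) + \size(T_R) + 1$ and $\labels(T) = \labels(T_L) + \labels(T_R) + r$ gives $\size(T) - \labels(T) = \contacts(I_L) + \contacts(I_R) + 1 - r$. On the other side, Proposition~\ref{prop:grafting-contact} together with $\contacts(u) = 1$ yields $\contacts(u \pright{r} I_R) = 1 + \contacts(I_R) - r$ and then $\contacts(I) = \contacts(I_L) + 1 + \contacts(I_R) - r$. The two expressions coincide, which closes the induction for the formula.

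Finally, bijectivity falls out of the same recursion. Proposition~\ref{prop:decomposition} is a bijection between interval-posets of size $n \geq 1$ and triplets $(I_L, I_R, r)$ with $0 \leq r \leq \contacts(I_R)$, while a grafting tree of size $n \geq 1$ decomposes uniquely into a left subtree, a right subtree, and a root label $\ell(v)$ constrained by $0 \leq \ell(v) \leq \size(T_R) - \labels(T_R)$. I expect the only real subtlety to be exactly this matching of constraints: the contact formula is what turns the numerical bound $r \leq \contacts(I_R)$ on the poset side into the bound $\ell(v) \leq \size(T_R(v)) - \labels(T_R(v))$ defining grafting trees, which is why~\eqref{eq:grafting-tree-contact} cannot be deferred but must be carried through the induction. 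Once the constraints are identified, the recursive structures are identical and $\graftingTree$ is a size-preserving bijection with inverse given by reading off the decomposition from the root down.
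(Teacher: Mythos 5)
Your proposal is correct and follows essentially the same route as the paper: both proofs run an induction on the grafting decomposition of Proposition~\ref{prop:decomposition}, use Proposition~\ref{prop:grafting-contact} to compute $\contacts(I_L \pleft u \pright{r} I_R)$, and—crucially, as you identified—carry the formula $\contacts = \size - \labels$ through the induction so that the constraint $0 \leq r \leq \contacts(I_R)$ on the poset side matches the labeling condition $\ell(v) \leq \size(T_R(v)) - \labels(T_R(v))$ defining grafting trees. The only difference is cosmetic: the paper organizes the argument by constructing $\graftingTree^{-1}$ first and then proving injectivity and surjectivity, whereas you run the forward map and conclude bijectivity from the matching of the two unique recursive decompositions.
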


Figure~\ref{fig:grafting-tree} illustrates the bijection with the full recursive decomposition. The interval-poset decomposes into the triplet $(I_L, I_R, 2)$ as shown in Figure~\ref{fig:grafting-decomposition}. The left and right subtrees of the grafting tree are obtained recursively by applying the decomposition on $I_L$ and $I_R$. As $\size(I_L) = 3$, the root of $T$ is $v_4$ and we have $\ell(v_4) = 2$, which is indeed the parameter $r$ of the grafting decomposition and also the number of decreasing children of $4$ in the interval-poset.

\begin{proof}
First, let us check that we can obtain an interval-poset from a grafting tree. We read the grafting tree as an expression tree where each empty subtree is replaced by an entry as an empty interval-poset and each node corresponds to the operation $I_L \pleft u \pright{r} I_R$ where $r$ is the label of the node, $I_L$ and $I_R$ the respective results of the expressions of the left and right subtrees, and $u$ the interval poset of size 1. In other words, the interval-poset $I = \graftingTree^{-1}(T, \ell)$ where $(T, \ell)$ is a grafting tree is computed recursively by 
\begin{itemize}
\item if $T$ is empty then $I = \emptyset$;
\item if $T = v_k(T_L, T_R)$ then $I = \graftingTree^{-1}(T_L, \ell_L) \pleft u \pright{r} \graftingTree^{-1}(T_R, \ell_R)$ with $\ell(v_k) = r$, and $\ell_L$ and $\ell_R$ the labeling function $\ell$ restricted to respectively $T_L$ and $T_R$.
\end{itemize}  
We need to check that the operation $u \pright{r} \graftingTree^{-1}(T_R, \ell_R)$ is well-defined, \emph{i.e}, in the case where $T$ is not empty, that we have $0 \leq r \leq \contacts(\graftingTree^{-1}(T_R, \ell_R))$. We do that by also proving by induction that $\contacts(\graftingTree^{-1}(T, \ell)) = \size(T) - \labels(T)$. This is true in the initial case where $T$ is empty: $\contacts(\emptyset) = 0$. Now, suppose that $T = v_k(T_L, T_R)$ with $\ell(v_k) = r$ and that the property is satisfied on $(T_L, \ell_L)$ and $(T_R, \ell_R)$. We write $I_L = \graftingTree^{-1}(T_L, \ell_L)$ and $I_R = \graftingTree^{-1}(T_R,\ell_R)$. In this case, $I' := u \pright{r} I_R$ is well-defined because we have by definition that $r \leq \size(T_R) - \labels(T_R)$, which by induction is $\contacts(I_R)$. Besides, by Proposition~\ref{prop:grafting-contact}, we have $\contacts(I') = 1 + \contacts(I_R) - r$. We now compute $I = I_L \pleft I'$ and we get $\contacts(I) = \contacts(I_L) + 1 + \contacts(I_R) - r$, which is by induction $\size(T_L) - \labels(T_L) + 1 + \size(T_R) - \labels(T_R) - r = \size(T) - \labels(T)$.

Conversely, it is clear from Proposition~\ref{prop:decomposition} that the grafting decomposition of an interval-poset $I$ gives a labeled binary tree $(T, \ell)$. By the unicity of the decomposition, it is is the only labeled binary tree such that $I = \graftingTree^{-1}(T, \ell)$. This proves that $\graftingTree^{-1}$ is injective. To prove that it is surjective, we need to show that $\graftingTree(I)$ is indeed a grafting tree, \emph{i.e.}, the condition on the labels holds. Once again, this is done inductively. An empty interval-poset gives an empty tree and the condition holds. Now if $I$ decomposes into the triplet $(I_L, I_R, r)$ we suppose that the condition holds on $(T_L, \ell_L) = \graftingTree(I_L)$ and $(T_R,\ell_R) = \graftingTree(I_R)$. We know that $0 \leq r \leq \contacts(I_R)$ and we have just proved that $\contacts(I_R)$ is indeed $\size(T_R) - \labels(T_R)$.
\end{proof}

\begin{Proposition}
\label{prop:grafting-direct}
Let $I$ be an interval-poset and $\graftingTree(I) = (T,\ell)$, then
\begin{enumerate}
\item $T$ is the upper bound binary tree of $I$;
\item $\ell(v_i)$ is the number of decreasing children of the vertex $i$ in $I$.
\end{enumerate} 
\end{Proposition}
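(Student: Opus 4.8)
The plan is to prove both statements simultaneously by induction on $\size(I)$, following the recursive definition of $\graftingTree$ from Proposition~\ref{prop:grafting-tree}. The base case $I = \emptyset$ is immediate: the empty tree is the upper bound tree of the empty interval, and there are no vertices to label. For the inductive step I would take the grafting decomposition $I = I_L \pleft u \pright{r} I_R$ of Proposition~\ref{prop:decomposition}, set $p = \size(I_L)$ and $q = \size(I_R)$, and write $(T_L, \ell_L) = \graftingTree(I_L)$ and $(T_R, \ell_R) = \graftingTree(I_R)$, so that $T = \bullet(T_L, T_R)$ with its in-order root $v_{p+1}$ carrying label $r$. By induction, $T_L$ and $T_R$ are the upper bound trees of $I_L$ and $I_R$, and $\ell_L, \ell_R$ record their numbers of decreasing children.

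For part (1), I would track the upper bound Dyck path through the two grafting operations. By Proposition~\ref{prop:grafting-rise-left}, the upper bound Dyck path of $I = I_L \pleft (u \pright{r} I_R)$ is the concatenation $D_L \cdot D'$, where $D_L$ is the upper bound path of $I_L$ and $D'$ that of $u \pright{r} I_R$; by Proposition~\ref{prop:grafting-rise-right} we have $D' = 1 D_R 0$ with $D_R$ the upper bound path of $I_R$. Hence the upper bound path of $I$ is $D_L \, 1 D_R 0$. The key point is that $1 D_R 0$ is a primitive Dyck path, so $D_L \, 1 D_R 0$ is exactly the canonical factorization $D_1 1 D_2 0$ of Definition~\ref{def:dyck-tree}, with $D_1 = D_L$ and $D_2 = D_R$. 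Therefore $\tree$ of the upper bound path of $I$ equals $\bullet(\tree(D_L), \tree(D_R))$, which by induction is $\bullet(T_L, T_R) = T$; this identifies $T$ with the upper bound tree of $I$.

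For part (2), I would use that, by Proposition~\ref{prop:contact-dc}, the number of decreasing children of vertex $i$ is the entry $\contactsStep{i}(I)$ of the contact vector, and then read these entries off from Proposition~\ref{prop:grafting-contact} applied to the left and right graftings in turn. Composing the two formulas yields $\contactsStep{i}(I) = \contactsStep{i}(I_L)$ for $1 \le i \le p$, then $\contactsStep{p+1}(I) = r$ for the node $u$ (the in-order root $v_{p+1}$ of $T$), and finally $\contactsStep{p+1+j}(I) = \contactsStep{j}(I_R)$ for the vertices of $I_R$. Since $\ell$ restricts to $\ell_L$ on $T_L$, to $\ell_R$ on $T_R$, and assigns $r$ to the root, the induction hypothesis gives $\ell(v_i) = \contactsStep{i}(I)$, i.e.\ the number of decreasing children of $i$ in $I$, for every $i$.

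The only genuinely delicate step is the matching inside part (1): verifying that the concrete factorization $D_L \, 1 D_R 0$ produced by the grafting operations coincides with the \emph{unique} factorization $D_1 1 D_2 0$ required by the recursive $\tree$ map. This reduces to the observation that $1 D_R 0$ returns to height $0$ only at its endpoints, hence is primitive, which is precisely the uniqueness condition recalled after Definition~\ref{def:dyck-tree}. Once this is in place, everything else is bookkeeping already packaged in Propositions~\ref{prop:grafting-contact}, \ref{prop:grafting-rise-left}, and~\ref{prop:grafting-rise-right}, together with the in-order labeling that makes $v_{p+1}$ the root of $\bullet(T_L, T_R)$.
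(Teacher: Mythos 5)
Your proof is correct and follows the same inductive skeleton as the paper's (induction on the size of $I$ via the grafting decomposition $I = I_L \pleft u \pright{r} I_R$), but it justifies the two key steps by different means. For part (1), the paper simply cites Prop.~3.4 of \cite{IntervalPosetsInitial}, which already asserts that the upper bound binary tree of the composition is $v_k(T_L,T_R)$ built from the upper bounds of $I_L$ and $I_R$; you instead re-derive this inside the present paper, tracking the upper-bound Dyck path through Propositions~\ref{prop:grafting-rise-left} and~\ref{prop:grafting-rise-right} and matching $D_L\,1D_R0$ against the unique factorization $D_1 1 D_2 0$ of Definition~\ref{def:dyck-tree} via primitivity of $1D_R0$. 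This is sound and non-circular (both propositions are established before, and independently of, Proposition~\ref{prop:grafting-direct}), and it makes the proof self-contained where the paper defers to an external reference; the only thing you should add is the trivial bookkeeping for the degenerate cases $\size(I_L)=0$ or $\size(I_R)=0$, since Propositions~\ref{prop:grafting-rise-left} and~\ref{prop:grafting-rise-right} are stated only for non-empty factors. For part (2), the paper argues directly from the definition of $\pright{r}$ --- the added relations make exactly the first $r$ decreasing roots of $I_R$ into decreasing children of the new vertex, and no other decreasing-children count changes --- whereas you compose the contact-vector formulas of Proposition~\ref{prop:grafting-contact} with the identification of Proposition~\ref{prop:contact-dc}. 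That is a valid repackaging of the same computation (Proposition~\ref{prop:grafting-contact} was itself proved by counting decreasing children), and as a by-product you essentially establish Proposition~\ref{prop:grafting-tree-contact} directly, which the paper instead deduces afterwards as a corollary of Proposition~\ref{prop:grafting-direct}.
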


In other words, the grafting tree of an interval-poset can be obtained directly without using the recursive decomposition. Also, the tree $T$ only depends on the initial forest and the labeling $\ell$ only depends on the final forest.

\begin{proof}
We prove the result by induction on the size of $I$. If $I$ is empty, there is nothing to prove. We then suppose that $I$ decomposes into a triplet $(I_L, I_R, r)$ with $k = \size(I_L) + 1$. We suppose by induction that the proposition is true on $I_L$ and $I_R$. Let $(T, \ell) = \graftingTree(I)$, $(T_L, \ell_L) = \graftingTree(I_L)$, and $(T_R, \ell_R) = \graftingTree(I_R, \ell_R)$. By induction, $T_L$ and $T_R$ are the upper bound binary trees of $I_L$ and $I_R$ respectively. In \cite[Prop. 3.4]{IntervalPosetsInitial}, we proved $T = v_k(T_L, T_R)$, which by construction of the initial forest is indeed the upper bound binary tree of $I$. The result on the labeling function $\ell$ is obtained by induction on $\ell_L$ and $\ell_R$ for all vertices $v_i$ with $i \neq k$. Besides, by definition of the grafting tree, we have $\ell(v_k) = r$, which is indeed the number of decreasing children of the vertex $k$ in $I$ by the definition of the right grafting $\pright{r}$.
\end{proof}

\begin{Remark}
Note that the grafting tree of an Tamari interval has similarities with another structure in bijection with interval-posets: \emph{closed flow} on a planar forest, which was described in \cite{ME_FPSAC2014}. The planar forest associated to an interval-poset depends only on the initial forest of the interval, \emph{i.e.}, only on its upper bound binary tree, which also gives the shape of the grafting tree. In other words, given a binary tree~$T$, there is a one-to-one correspondence between the possible labeling $\ell$ such that $(T,\ell)$ is a grafting tree and the closed flows on a certain planar forest $F$. As described in \cite[Fig. 10]{ME_FPSAC2014}, the forest $F$ corresponding to $T$ is obtained by a classical bijection often referred to by the ``left child to left brother'' bijection. It consists of transforming, for each node of the binary tree, the left child into a left brother and the right child into the last child in the planar forest. Now, the flow itself depends on the decreasing forest of the interval-poset just as the labeling $\ell$ of the grafting tree. Each node receiving a $-1$ in the flow corresponds to a node with a positive label in the grafting tree. 
\end{Remark}

\begin{Remark}
The ``left child to left brother`` bijection to planar forest also gives a direct bijection between grafting trees and $(1,1)$ description trees of \cite{CoriSchaefferDescTrees} (the planar forest is turned into a tree by adding a root). The labels $\ell'$ of the $(1,1)$ description trees are obtained through a simple transformation from $\ell$: for each node $v$, $\ell'(v) = 1 + \size(T_R(v)) - \labels(T_R(v)) - \ell(v)$. 
\end{Remark}

\begin{Proposition}
\label{prop:grafting-tree-contact}
Let $I$ be an interval-poset and $(T,\ell) = \graftingTree(I)$ its grafting tree with $v_1, \dots, v_n$ the vertices of~$T$. Then $\contactsV^*(I) = (\ell(v_1), \dots, \ell(v_{n-1}))$.
\end{Proposition}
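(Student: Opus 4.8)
The plan is to observe that this proposition is essentially a corollary of two results already established, namely Proposition~\ref{prop:contact-dc} and part (2) of Proposition~\ref{prop:grafting-direct}, so I would prove it by directly matching the two descriptions of the number of decreasing children of each vertex.

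First I would recall that by definition $\contactsV^*(I) = (\contactsStep{1}(I), \dots, \contactsStep{n-1}(I))$, so it suffices to show that $\contactsStep{i}(I) = \ell(v_i)$ for each $1 \leq i \leq n-1$. The key intermediate object is the number of decreasing children of the vertex $i$ in $I$, that is $\dcstep{i}(I)$. On one side, Proposition~\ref{prop:contact-dc} states that $\contactsV(I) = \DC(I)$; comparing the definitions of these two vectors entry by entry gives $\contactsStep{i}(I) = \dcstep{i}(I)$ for all $1 \leq i \leq n-1$ (the leading entries being $\contacts(I) = \dcstep{0}(I)$). On the other side, part (2) of Proposition~\ref{prop:grafting-direct} asserts precisely that $\ell(v_i)$ equals the number of decreasing children of the vertex $i$ in $I$, i.e.\ $\ell(v_i) = \dcstep{i}(I)$. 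Chaining the two equalities yields $\ell(v_i) = \dcstep{i}(I) = \contactsStep{i}(I)$, whence $(\ell(v_1), \dots, \ell(v_{n-1})) = \contactsV^*(I)$, as claimed.

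I do not expect a genuine obstacle here, since the combinatorial content has already been extracted into the earlier propositions; the only point requiring a moment of care is the bookkeeping of indices and the truncation to $n-1$ entries. Specifically, I would note that both vectors legitimately omit the terminal term: $\dcstep{n}(I)$ is always $0$ because the maximal vertex $n$ has no decreasing children, and correspondingly $v_n$ is the rightmost node in the in-order traversal, so its right subtree is empty and the grafting-tree condition forces $\ell(v_n) = 0$. Thus dropping the last coordinate is harmless and the indexing of $(\ell(v_1), \dots, \ell(v_{n-1}))$ aligns exactly with that of $\contactsV^*(I)$, completing the argument.
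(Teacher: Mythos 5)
Your proposal is correct and follows essentially the same route as the paper: the paper's proof likewise observes that $\contactsV^*(I) = \DC^*(I)$ by Proposition~\ref{prop:contact-dc} and then invokes Proposition~\ref{prop:grafting-direct} to identify $\ell(v_i)$ with the number of decreasing children of $i$. Your extra remarks on the truncation (that $\dcstep{n}(I) = 0$ and that the grafting-tree condition forces $\ell(v_n) = 0$ since the right subtree of $v_n$ is empty) are accurate details the paper leaves implicit.
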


\begin{proof}
Remember that $\contactsV^*(I) = \DC^*(I)$ by Proposition~\ref{prop:contact-dc}, \emph{i.e.}, the final forest vector given by reading the number of decreasing children of the vertices in $I$. Then the result is a direct consequence of Proposition~\ref{prop:grafting-direct}.
\end{proof}

\begin{Proposition}
\label{prop:grafting-tree-distance}
Let $I$ be an interval-poset and $(T,\ell) = \graftingTree(I)$ its grafting tree with $v_1, \dots, v_n$ the vertices of $T$. Let $d_i = \size(T_R(v_i)) - \labels(T_R(v_i)) - \ell(v_i)$ for all $1 \leq i \leq n$ where $T_R(v_i)$ is the right subtree of the vertex $v_i$ in $T$. Then
\begin{align}
\distance(I) = \sum_{1 \leq i \leq n} d_i.
\end{align}
\end{Proposition}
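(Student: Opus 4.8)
The plan is to proceed by induction on $\size(I)$, using the grafting decomposition of Proposition~\ref{prop:decomposition} together with the behaviour of the distance under the grafting operations established in Proposition~\ref{prop:grafting-distance}. The base case $I = \emptyset$ is immediate: both $\distance(I)$ and the empty sum $\sum_i d_i$ vanish.

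For the inductive step, suppose $\size(I) = n > 0$ and let $(I_L, I_R, r)$ be the grafting decomposition of $I$, so that $I = I_L \pleft u \pright{r} I_R$ and, by Proposition~\ref{prop:grafting-tree}, the grafting tree is $(T, \ell)$ with $T = \bullet(T_L, T_R)$, root $v_k$ (where $k = \size(I_L) + 1$), and $\ell(v_k) = r$; moreover $(T_L, \ell_L) = \graftingTree(I_L)$ and $(T_R, \ell_R) = \graftingTree(I_R)$. Combining the two formulas of Proposition~\ref{prop:grafting-distance} — first the left grafting with $I_1 = I_L$ and $I_2 = u \pright{r} I_R$, then the right grafting — gives
\[
\distance(I) = \distance(I_L) + \distance(u \pright{r} I_R) = \distance(I_L) + \distance(I_R) + \contacts(I_R) - r.
\]

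It then remains to match this against $\sum_{i} d_i$. I would first observe that the quantity $d_i$ attached to a node is \emph{local}: for a node $v_i$ lying in $T_L$ (resp.\ $T_R$), its right subtree $T_R(v_i)$ and its label $\ell(v_i)$ are entirely contained in, and left unchanged by, the subtree $T_L$ (resp.\ $T_R$) under the recursive construction of Proposition~\ref{prop:grafting-tree}, so the value $d_i$ computed inside the subtree agrees with the one computed in $T$. Hence the induction hypothesis applied to $I_L$ and $I_R$ yields $\sum_{v_i \in T_L} d_i = \distance(I_L)$ and $\sum_{v_i \in T_R} d_i = \distance(I_R)$. The only remaining term is the contribution $d_k$ of the root: since $T_R(v_k) = T_R$ and $\ell(v_k) = r$, and since Proposition~\ref{prop:grafting-tree} gives $\contacts(I_R) = \size(T_R) - \labels(T_R)$, we get
\[
d_k = \size(T_R) - \labels(T_R) - r = \contacts(I_R) - r.
\]
Since the $n$ vertices of $T$ split as $T_L$, the root $v_k$, and $T_R$, summing the three contributions reproduces exactly the displayed expression for $\distance(I)$, completing the induction. (The degenerate cases $\size(I_L) = 0$ or $\size(I_R) = 0$ are absorbed by the conventions on grafting with an empty interval-poset recorded in the earlier propositions.)

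The proof is essentially bookkeeping once the grafting framework is in place; the only point requiring a moment's care — and the closest thing to an obstacle — is the locality observation, namely that $d_i$ depends only on $v_i$'s own right subtree and so is unaffected by how that subtree is embedded into the larger tree. This is precisely what allows the induction hypothesis to be applied verbatim to the labelled subtrees $(T_L, \ell_L)$ and $(T_R, \ell_R)$, and it rests on the fact that the recursive construction of the grafting tree preserves the labels of $T_L$ and $T_R$.
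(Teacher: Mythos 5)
Your proposal is correct and follows essentially the same route as the paper: induction on the size via the grafting decomposition, the identity $\distance(I) = \distance(I_L) + \distance(I_R) + \contacts(I_R) - r$ from Proposition~\ref{prop:grafting-distance}, the splitting of $\sum_i d_i$ into the contributions of $T_L$, the root $v_k$, and $T_R$, and the evaluation $d_k = \contacts(I_R) - r$ via \eqref{eq:grafting-tree-contact}. The only difference is that you spell out the locality of $d_i$ under the embedding of the subtrees, which the paper uses implicitly; this is a harmless (and welcome) extra bit of care.
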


For example, on Figure~\ref{fig:grafting-tree}, we have all $d_i = 0$ except for $d_4 = 4 - 1 - 2 = 1$ and $d_5 = 3 - 1 = 2$. This indeed is consistent with $\distance(I) = 3$, the 3 Tamari inversions being $(4,7)$, $(5,6)$, and $(5,7)$. More precisely, the number $d_i$ is the number of Tamari inversions of the form~$(i,*)$.

\begin{proof}
Once again, we prove the property inductively. This is true for an empty tree where we have $\distance(I) = 0$. Now, let $I$ be a non-empty interval-poset, then $I$ decomposes into a triplet $(I_L, I_R, r)$ with $I = I_L \pleft u \pright{r} I_R$. Proposition~\ref{prop:grafting-distance} gives us
\begin{align}
\distance(I) &= \distance(I_L \pleft u \pright{r} I_R) \\
&= \distance(I_L) + \distance(u \pright{r} I_R) \\
&= \distance(I_L) + \distance(I_R) + \contacts(I_R) - r.
\end{align}

Now let $(T, \ell) = \graftingTree(I)$. By definition, we have $T = v_k(T_L, T_R)$ with $k = \size(T_L) + 1$, $(T_L, \ell_L) = \graftingTree(I_L)$, and $(T_R, \ell_R) = \graftingTree(I_R)$. Using the induction hypothesis and \eqref{eq:grafting-tree-contact}, we obtain

\begin{align}
\sum_{1 \leq i \leq n} d_i &= \sum_{1 \leq i < k} d_i + d_k + \sum_{k < i \leq n} d_i  \\
&= \distance(I_L) + \size(T_R) - \labels(T_R) - \ell(v_k) + \distance(I_R) \\
&= \distance(I_L) + \contacts(I_R) - r + \distance(I_R).
\end{align}
\end{proof}

\subsection{Left branch involution on the grafting tree}
\label{sec:grafting-tree-involution}

We now give an interesting involution on the grafting tree, which in turns gives an involution on Tamari intervals. In Section~\ref{sec:statement}, we mentioned that the rise-contact involution on Dyck paths (not intervals) used the reversal of a Dyck path conjugated with the Tamari symmetry. The equivalent of the Dyck path reversal on the corresponding binary tree is also a classical involution, which we call the \emph{left branch involution}. Applying this involution on grafting trees will allow us to generalize it to intervals. A \emph{right hanging binary tree} is a binary tree whose left subtree is empty. An alternative way to see a binary tree is to understand it as list of right hanging binary trees grafted together on its left-most branch. For example, the tree of Figure~\ref{fig:grafting-tree} can be decomposed into 3 right hanging binary trees : the one with vertex $v_1$, the one with vertices $v_2$ and $v_3$ and the one with vertices $v_4$ to $v_8$.

\begin{Definition}
The \emph{left branch involution} on binary trees is the operation that recursively reverse the order of right hanging trees on every left branch of the binary tree.

We write $\leftbranch(T)$ the image of a binary tree $T$ through the involution.
\end{Definition}

This operation is a very classical involution on binary trees, see Figure~\ref{fig:left-branch-involution} for an example. It is implemented in SageMath \cite{SageMath2017} as the \texttt{left\_border\_symmetry} method on binary trees. You can also understand it in a recursive manner: if $T$ is an non-empty tree with $T_L$ and $T_R$ as respectively left and right subtrees, then the image of $T$ can be constructed from the respective image $T_R'$ and $T_L'$ of $T_R$ and $T_L$ following the structure of Figure~\ref{fig:left-branch-involution-recursive}. The root is grafted on the left-most branch of $T_L'$ with an empty left subtree and $T_R'$ as a right subtree.

\begin{figure}[ht]
\input{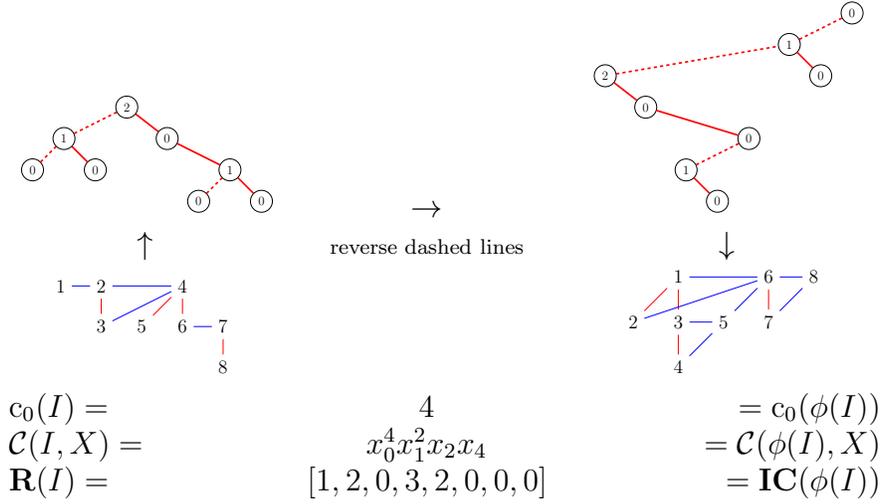}
\caption{The left-branch involution.}
\label{fig:left-branch-involution}
\end{figure}

\begin{figure}[ht]
\input{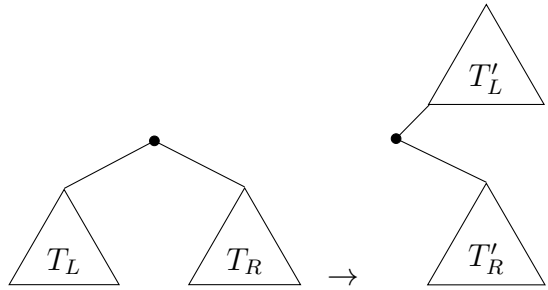}
\caption{The left-branch involution seen recursively.}
\label{fig:left-branch-involution-recursive}
\end{figure}

\begin{Proposition}
The left branch involution is an involution on grafting trees.
\end{Proposition}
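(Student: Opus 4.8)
The plan is to prove two things: that $\leftbranch$ sends a grafting tree to a grafting tree, and that applying it twice recovers the original labeled tree. Throughout I regard the labeling as being \emph{transported along the nodes}: the node-bijection induced by $\leftbranch$ carries each node to its new position together with its label, so that in the recursive description of Figure~\ref{fig:left-branch-involution-recursive} the root of $T$ keeps its label $r$ while the labels inside $T_L$ and $T_R$ are rearranged recursively. The first observation I would record is that, since $\leftbranch$ merely permutes the nodes, for every node $v_i$ the right subtree $\leftbranch(T_R(v_i))$ consists of exactly the same nodes (carrying the same labels) as $T_R(v_i)$; in particular $\size$ and $\labels$ of any right subtree are invariant under $\leftbranch$.

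Next I would show, by induction on $\size(T)$, that $\leftbranch$ maps grafting trees to grafting trees. The base case is the empty tree. For the inductive step write $T = \bullet(T_L, T_R)$ with root $v$ of label $r$, so that $\leftbranch(T)$ is obtained by grafting $v$ (with empty left subtree and right subtree $\leftbranch(T_R)$) at the bottom of the left-most branch of $\leftbranch(T_L)$, as in Figure~\ref{fig:left-branch-involution-recursive}. The key point is that this grafting only alters the \emph{left} subtree of the deepest node of that branch, so every node of $\leftbranch(T_L)$ keeps the right subtree it already had inside $\leftbranch(T_L)$; by the induction hypothesis applied to the grafting tree $(T_L, \ell_L)$, the labeling condition $\ell(v_i) \leq \size(T_R(v_i)) - \labels(T_R(v_i))$ therefore holds at each of those nodes. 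For the migrated root $v$ the new right subtree is $\leftbranch(T_R)$, and by the first observation $\size(\leftbranch(T_R)) = \size(T_R)$ and $\labels(\leftbranch(T_R)) = \labels(T_R)$; hence the original inequality $r \leq \size(T_R) - \labels(T_R)$ guarantees the condition at $v$. Thus $\leftbranch(T)$ satisfies the grafting-tree condition at every node.

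Finally I would establish the involution property. On plain binary trees $\leftbranch$ is the classical left-border symmetry: it reverses the ordered list of right-hanging trees along every left branch, and since reversing a list twice is the identity, a straightforward induction on the size gives $\leftbranch \circ \leftbranch = \mathrm{id}$ on binary trees. Because the labels are transported along the induced node-bijection and the tree-level map is an involution returning every node to its original position, the labels are simultaneously restored; hence $\leftbranch \circ \leftbranch = \mathrm{id}$ on grafting trees as well.

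I would flag the verification in the middle paragraph as the main (though mild) obstacle: one must be careful that grafting the root onto the left-most branch of $\leftbranch(T_L)$ changes no right subtree of $\leftbranch(T_L)$ — only the left subtree of its deepest spine node — so that the inductively guaranteed inequalities remain untouched, and that the invariance $\labels(\leftbranch(T_R)) = \labels(T_R)$ is exactly what keeps the root's label admissible. Everything else is bookkeeping on the recursion of Figure~\ref{fig:left-branch-involution-recursive}.
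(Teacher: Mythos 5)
Your proof is correct and rests on the same key fact as the paper's: the node-bijection induced by $\leftbranch$ carries the right subtree of each node to the right subtree of its image with the same node set, so $\size$ and $\labels$ of right subtrees (and hence the grafting-tree inequality) are preserved — the paper asserts this invariance directly ("the involution only acts on left branches"), while you derive it by induction on the recursive description and additionally spell out $\leftbranch \circ \leftbranch = \mathrm{id}$, which the paper treats as classical. The only minor omission is that the condition at the nodes inside $\leftbranch(T_R)$ should also be cited from the induction hypothesis applied to $(T_R, \ell_R)$, exactly parallel to your treatment of $T_L$; this is trivially filled and does not affect the argument.
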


\begin{proof}
First, let us clarify what the involution means on a grafting tree~$(T, \ell)$: we apply the involution on the binary tree $T$ and the vertices \emph{move along with their labels} as illustrated in Figure~\ref{fig:left-branch-involution}. We obtain a new labeled binary tree $(T', \ell')$ where every vertex $v_i$ of $T$ is sent to a new vertex $v_{i'}$ of $T'$ such that $\ell(v_i) = \ell'(v_{i'})$. For example, in Figure~\ref{fig:left-branch-involution}, the root $v_4$ of $T$ is sent to $v_1$ of $T'$, with $\ell(v_4) = \ell'(v_1) = 2$.

The only thing to check is that $\ell'$ still satisfies the grafting tree condition. This is immediate. Indeed, for $v_i \in T$, and $T_R(v_i)$ its right subtree, we have $\ell(v_i) \leq \size(T_R(v_i)) - \labels(T_R(v_i))$. Now, if $v_{i'}$ is the image of $v_i$ and $T'_R(v_{i'})$ its right subtree, even though $T'_R$ might be different from $T_R$, the statistics are preserved: $\size(T'_R(v_{i'})) = \size(T_R(v_i))$ and $\labels(T'_R(v_{i'})) = \labels(T_R(v_i))$, because the involution only acts on left branches and the set of labels of the right subtree is preserved.
\end{proof}

As a consequence, we now have an involution on Tamari intervals.

\begin{Definition}[The Left Branch Involution]
\label{def:leftbranch-intervals}
The left branch involution on Tamari intervals is defined by the left branch involution on their grafting trees.
\begin{align}
\leftbranch(I) := \graftingTree^{-1}(\leftbranch(\graftingTree(I)))
\end{align}
\end{Definition}

The grafting tree seems to be the most natural object to describe the involution. Indeed, even though it can be easily computed on interval-posets using decomposition and graftings, we have not seen any simple direct description of it. Furthermore, if we understand the interval as a couple of a lower bound and upper bound, then the action on the upper bound is simple: the shape of the upper bound binary tree is given by the grafting tree and so the involution on the upper bound is only the classical left-branch involution, which corresponds to reversing the Dyck path. Nevertheless, the action on the lower bound cannot be described as an involution on binary trees: it depends on the corresponding upper bound. One way to understand this involution is that we apply the left-branch involution on the upper bound binary tree and the lower bounds ``follows'' in the sense given by the labels of the grafting tree.

\begin{Proposition}
\label{prop:leftbranch-statistics}
Let $I$ be an interval of Tamari, then
\begin{align}
\label{eq:leftbranch-contacts}
\contacts(I) &= \contacts(\leftbranch(I)); \\
\label{eq:leftbranch-contactsP}
\contactsP(I) &= \contactsP(\leftbranch(I)); \\
\label{eq:leftbranch-distance}
\distance(I) &= \distance(\leftbranch(I)); \\
\label{eq:leftbranch-rises}
\risesV(I) &= \IC(\leftbranch(I)).
\end{align}

In other words, the involution exchanges the rise vector and initial forest vector while leaving unchanged the number of contacts, the contact monomial, and the distance.
\end{Proposition}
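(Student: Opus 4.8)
The four identities split into three that I expect to be routine and one that is the real content. The plan for the first three, \eqref{eq:leftbranch-contacts}, \eqref{eq:leftbranch-contactsP} and \eqref{eq:leftbranch-distance}, is to exploit that the left branch involution merely permutes the nodes of the grafting tree $(T,\ell)=\graftingTree(I)$ while carrying each label along, and so preserves three quantities at once: the node count $\size(T)$, the full multiset of labels (hence $\labels(T)$), and, for the node $v_{i'}$ that is the image of $v_i$, both $\size(T_R(v_{i'}))$ and $\labels(T_R(v_{i'}))$ — the last two being exactly the invariants already verified in the proof that $\leftbranch$ preserves grafting trees. From \eqref{eq:grafting-tree-contact} we have $\contacts(I)=\size(T)-\labels(T)$, giving \eqref{eq:leftbranch-contacts} immediately. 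For \eqref{eq:leftbranch-contactsP}, Proposition~\ref{prop:grafting-tree-contact} identifies the truncated contact vector with $(\ell(v_1),\dots,\ell(v_{n-1}))$; since the rightmost in-order node $v_n$ has empty right subtree and hence label $0$, this multiset is the full label multiset with one $0$ deleted, which is preserved, and $\contactsP$ being a commutative monomial depending only on it and on $\contacts(I)$ is therefore preserved. For \eqref{eq:leftbranch-distance}, Proposition~\ref{prop:grafting-tree-distance} writes $\distance(I)=\sum_i d_i$ with $d_i=\size(T_R(v_i))-\labels(T_R(v_i))-\ell(v_i)$, and each summand is carried to the matching summand of $\leftbranch(I)$ by the invariants above.

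The heart of the argument is \eqref{eq:leftbranch-rises}, $\risesV(I)=\IC(\leftbranch(I))$. The guiding intuition, which I would state but not rely on computationally, is that by Proposition~\ref{prop:grafting-direct} the tree $T$ is the upper bound tree of $I$ and $\leftbranch(T)$ that of $\leftbranch(I)$; since $\risesV$ reads the upper bound Dyck path, $\IC$ reads the initial forest of the upper bound, and $\leftbranch$ on binary trees is exactly the reversal of the upper bound Dyck path, the identity expresses that reversal turns the ``ascents after down-steps'' recorded by $\risesV$ into the ``descents before up-steps'' recorded, via Proposition~\ref{prop:dyck-inc-forest}, by the increasing children in $\IC$. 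Rather than track this index reversal on paths, I would prove \eqref{eq:leftbranch-rises} by induction on the grafting tree, decomposing $T$ along its leftmost branch.

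Concretely, write $T$ as its list of right hanging trees along the leftmost branch: if the leftmost-branch nodes carry labels $r_1,\dots,r_p$ from top to bottom, with right subtrees whose interval-posets are $I_{R_1},\dots,I_{R_p}$, then setting $G_j:=u\pright{r_j} I_{R_j}$ one has $I=(\cdots(G_p\pleft G_{p-1})\cdots\pleft G_2)\pleft G_1$. The left branch involution reverses this list and acts recursively inside each $I_{R_j}$, so $\leftbranch(I)=(\cdots(\leftbranch(G_1)\pleft\leftbranch(G_2))\cdots)\pleft\leftbranch(G_p)$ with $\leftbranch(G_j)=u\pright{r_j}\leftbranch(I_{R_j})$. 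Now I apply the concatenation formulas: Proposition~\ref{prop:grafting-rise-left} gives $\risesV(X\pleft Y)=(\risesV(X),\risesV(Y))$, whence $\risesV(I)=(\risesV(G_p),\dots,\risesV(G_1))$, while Proposition~\ref{prop:grafting-ic} gives $\IC(X\pleft Y)=(\IC(Y),\IC(X))$, whence $\IC(\leftbranch(I))=(\IC(\leftbranch(G_p)),\dots,\IC(\leftbranch(G_1)))$. It then suffices to match block by block, i.e. to prove $\risesV(G_j)=\IC(\leftbranch(G_j))$ for each $j$, and for this I would invoke Remark~\ref{rem:right-grafting-rise-ic} to get $\IC(\leftbranch(G_j))=(1+\icinf(\leftbranch(I_{R_j})),\IC^{*}(\leftbranch(I_{R_j})),0)$, substitute the induction hypothesis $\IC(\leftbranch(I_{R_j}))=\risesV(I_{R_j})$, and compare with $\risesV(G_j)=(1+\rises(I_{R_j}),\risesV^{*}(I_{R_j}),0)$ from Proposition~\ref{prop:grafting-rise-right}.

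The main obstacle is exactly the reversal of reading order hidden in \eqref{eq:leftbranch-rises}: $\risesV$ lists its blocks from left to right along the path whereas $\IC$ lists them from the last vertex down to the second, so the two concatenation rules $\risesV(X\pleft Y)=(\risesV(X),\risesV(Y))$ and $\IC(X\pleft Y)=(\IC(Y),\IC(X))$ carry opposite orientations. The argument closes only because the left branch involution reverses the leftmost-branch list at precisely the rate needed to convert one orientation into the other, so that the blocks realign. The delicate point to check is that these nested reversals are globally consistent — the base cases $I=\emptyset$ and $I=u$ being trivial with $\risesV(u)=\IC(u)=(1)$ — after which the block-by-block comparison finishes the induction.
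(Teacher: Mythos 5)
Your proposal is correct and takes essentially the same approach as the paper: the first three identities follow from the fact that $\leftbranch$ permutes the labelled nodes of the grafting tree while preserving $\size(T_R(v))$ and $\labels(T_R(v))$ (combined with \eqref{eq:grafting-tree-contact}, Proposition~\ref{prop:grafting-tree-contact}, and Proposition~\ref{prop:grafting-tree-distance}), and the fourth is proved by induction using the right-hanging-tree decomposition of the leftmost branch together with Propositions~\ref{prop:grafting-rise-left}, \ref{prop:grafting-rise-right}, \ref{prop:grafting-ic} and Remark~\ref{rem:right-grafting-rise-ic}. The only cosmetic difference is that the paper runs the induction on the single binary decomposition $I = I_L \pleft u \pright{i} I_R$ and appeals to associativity of $\pleft$, whereas you unroll the entire leftmost branch into blocks $G_j$ and match them one by one; the key lemmas and the orientation-reversal mechanism are identical.
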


\begin{proof}
Points \eqref{eq:leftbranch-contacts} and \eqref{eq:leftbranch-contactsP} are immediate. Indeed, \eqref{eq:grafting-tree-contact} tells us that $\contacts(I)$ is given by $\size(\graftingTree(I)) - \labels(\graftingTree(I))$ : this statistic is not changed by the involution. Now remember that, by Proposition~\ref{prop:grafting-contact}, the values $\contactsStep{1}(I), \dots, \contactsStep{n}(I)$ are given by $\ell(v_1), \dots, \ell(v_n)$, so $\contactsP(I) = x_{\contacts(I)} x_{\ell(v_1)} \dots x_{\ell(v_{n-1})} = \frac{x_{\contacts(I)} x_{\ell(v_1)} \dots x_{\ell(v_{n})}}{x_0}$. This monomial is commutative and the involution sending $\ell$ to $\ell'$ only applies a permutation on the indices: the monomial itself is not changed. Also, we always have $\ell(v_n) = \ell'(v_n) = 0$ so the division by $x_0$ is still possible after the permutation and still removes the last value $x_{\ell'(v_n)}$. As an example, on Figure~\ref{fig:left-branch-involution}, we have $\contactsP(I) = x_4 x_0 x_1 x_0 x_2 x_0 x_0 x_1 = x_0^4 x_1^2 x_2 x_4 = x_4 x_2 x_0 x_1 x_0 x_0 x_1 x_0 x_0 = \contactsP(\leftbranch(I))$.

Point \eqref{eq:leftbranch-distance} is also immediate by Proposition~\ref{prop:grafting-tree-distance}. Indeed, for all $1 \leq i \leq n$, we have $d_i = \size(T_R(v_i)) - \labels(T_R(v_i)) = \size(T_R(v_{i'})) - \labels(T_R(v_{i'})) = d_{i'}$ if $v_i$ is sent to $v_{i'}$ by the involution. Once again, the values $d_1, \dots, d_n$ are only permuted and the sum stays the same.

We prove point~\eqref{eq:leftbranch-rises} by induction on the size of the tree. It is trivially true when $\size(I) = 0$ (both vectors are empty). Now suppose that $I$ is an interval-poset of size $n > 0$. Let $(T, \ell) = \graftingTree(I)$, then $T$ is a non-empty binary tree with two subtrees $T_L$ and $T_R$ (which can be empty) and a root node $v$ such that $\ell(v) = i$. Let us call $I_L$ and $I_R$ the interval-posets corresponding to $T_L$ and $T_R$ respectively. By definition, we have that

\begin{align}
I = I_L \pleft u \pright{i} I_R.
\end{align}

We call $T_L'$ and $T_R'$ the respective image of $T_L$ and $T_R$ through the left branch involution and $I_L'$ and $I_R'$ the corresponding interval-posets. As both $T_L$ and $T_R$ are of size strictly less than $n$, we have by induction that

\begin{align}
\risesV(I_L) &= \IC(I_L'), \\
\nonumber
\risesV(I_R) &= \IC(I_R').
\end{align}

Following the recursive description of the left branch involution given on Figure~\ref{fig:left-branch-involution-recursive}, we obtain that
the image $I' := \leftbranch(I)$ is given by

\begin{align}
\label{eq:leftbranch-intervals-recursive}
\left( u \pright{i} I_R' \right) \pleft I_L'.
\end{align}

We are using a small shortcut here as this expression does not exactly correspond to the definition of the grafting tree. Indeed, $T_L'$ is a whole tree, not a single node. Nevertheless, it can be easily checked that the left product $\pleft$ is associative. Then any tree can be seen as a series of a right-hanging trees grafted to each other as in the following picture.

\begin{center}
\scalebox{.8}{\input{figures/right-hanging-trees}}
\end{center}

The definition gives us that the interval-poset is computed by

\begin{align}
&(\dots((u \pright{c} I_C) \pleft \dots (u \pright{b} I_B)) \pleft (u \pright{a} I_A))\\
\nonumber
 &= (u \pright{c} I_C) \pleft \dots  (u \pright{b} I_B) \pleft (u \pright{a} I_A).
\end{align}

Using \eqref{eq:leftbranch-intervals-recursive}, we obtain the desired result. Indeed, let $J = u \pright{i} I_R$ and $J' = u \pright{i} I_R'$. If $I_R$ is empty, so is $I_R'$ and we have $\risesV(J) = \IC(J') = (1)$. If not, we use Propositions~\ref{prop:grafting-ic} and~\ref{prop:grafting-rise-right} and Remark~\ref{rem:right-grafting-rise-ic} to obtain
\begin{align}
\risesV(J) &= (1 + \rises(I_R), \risesV^*(I_R), 0) \\
&= (1 + \icinf(I_R'), \IC^*(I_R'), 0) \\
&= \IC(J').
\end{align}

Now by using Propositions~\ref{prop:grafting-rise-left} and~\ref{prop:grafting-ic}, we obtain

\begin{align}
\risesV(I) = \risesV(I_L \pleft J) 
&= (\risesV(I_L), \risesV(J)) \\
&= (\IC(I_L'), \IC(J')) \\
&= \IC(J' \pleft I_L') = \IC(I').
\end{align}

\end{proof}

\subsection{The complement involution and rise-contact involution}

As we have seen in Section~\ref{sec:statement}, the rise-contact involution on Dyck paths is a conjugation of the Tamari symmetry involution by the Dyck path reversal involution. The equivalent of the Dyck path reversal on intervals is the left-branch involution on the grafting tree. We now need to describe what is the Tamari symmetry on intervals: this is easy, especially described on interval-posets.

\begin{Definition}[The Complement Involution]
The \emph{complement} of an interval-poset $I$ of size $n$ is the interval-poset $J$ defined by
\begin{align}
i \trprec_J j \Leftrightarrow (n+1 -i) \trprec_I (n+1 -j). 
\end{align}
We write $\compl(I)$ the complement of $I$.
\end{Definition} 

\begin{figure}[ht]
\input{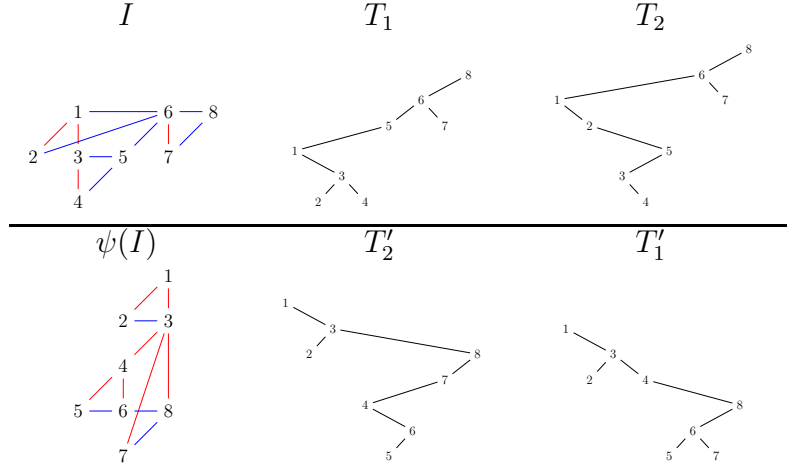}
\caption{The complement of an interval-poset}
\label{fig:compl}
\end{figure}

An example is shown on Figure~\ref{fig:compl}. It is clear by Definition~\ref{def:interval-poset} that this is still an interval-poset. Basically, this is an involution exchanging increasing and decreasing relations. This corresponds to the up-down symmetry of the Tamari lattice. It is a well known fact that the Tamari lattice is isomorphic to its inverse by sending every tree $T$ to its reverse $T'$ where the left and right subtrees have been exchanged on every node. Let $T_1$ and $T_2$ be respectively the lower and upper bounds of an interval~$I$. Let $T_1'$ and $T_2'$ be the respective reverses of $T_1$ and $T_2$. Then $T_1'$ is the upper bound of $\compl(I)$ and $T_2'$ is the lower bound.

\begin{Proposition}
\label{prop:complement-ic-dc}
Let $I$ be an interval-poset, then $\IC(I) = \DC(\compl(I))$.
\end{Proposition}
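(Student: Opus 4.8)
The plan is to exploit the fact that the complement is realized by the single relabeling map $\sigma : x \mapsto n+1-x$, and that this map is simultaneously a poset isomorphism $I \to \compl(I)$ and an order-reversing bijection on the labels $\{1,\dots,n\}$. First I would record the basic translation rule: writing $J = \compl(I)$, the defining equivalence $i \trprec_J j \Leftrightarrow (n+1-i)\trprec_I(n+1-j)$ says exactly that $x \trprec_I y \Leftrightarrow \sigma(x) \trprec_J \sigma(y)$, so $\sigma$ preserves the poset order. Since $\sigma$ reverses the natural order of the labels, an increasing relation $(a,b)$ of $I$ (that is, $a<b$ and $a \trprec_I b$) is sent to $\sigma(a) \trprec_J \sigma(b)$ with $\sigma(a) > \sigma(b)$, which is a decreasing relation of $J$; conversely every decreasing relation of $J$ arises this way. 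Hence $\sigma$ restricts to a bijection from the increasing relations of $I$ onto the decreasing relations of $J$, i.e. it is a poset isomorphism $\inc(I) \to \dec(J)$.

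Because an isomorphism of posets preserves cover relations, $\sigma$ sends increasing-cover relations of $I$ to decreasing-cover relations of $J$. From this I would read off the two local identities that carry all the content:
\begin{itemize}
\item a vertex $b$ is an increasing root of $I$ (no outgoing increasing edge) if and only if $\sigma(b)=n+1-b$ is a decreasing root of $J$, so $\icinf(I) = \dcstep{0}(J)$;
\item for each $i$, the increasing children $a<i$ of $i$ in $I$ are carried by $\sigma$ exactly onto the decreasing children $\sigma(a) > \sigma(i)$ of $\sigma(i)$ in $J$, so $\icstep{i}(I) = \dcstep{n+1-i}(J)$ for $2 \le i \le n$.
\end{itemize}

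Finally I would match the two vectors coordinate by coordinate, paying attention to the opposite reading conventions in the two definitions: $\IC$ lists the vertices in reverse order, as $\icinf(I), \icstep{n}(I), \dots, \icstep{2}(I)$, while $\DC$ lists them forward, as $\dcstep{0}(J), \dcstep{1}(J), \dots, \dcstep{n-1}(J)$. The entry of $\IC(I)$ in position $p$ with $1 \le p \le n-1$ is $\icstep{n+1-p}(I)$, which by the second identity equals $\dcstep{p}(J)$, the position-$p$ entry of $\DC(J)$; the position-$0$ entries agree by the first identity. Thus $\IC(I) = \DC(\compl(I))$.

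I expect the only delicate point to be exactly this bookkeeping of conventions: the reversal built into the reading order of $\IC$ is what compensates for the order-reversal in $\sigma$, so that the two vectors come out genuinely equal rather than merely reverses of one another. The poset-theoretic heart of the argument, namely that $\sigma$ is an isomorphism exchanging the initial forest of $I$ with the final forest of $\compl(I)$, is immediate from the definition of the complement.
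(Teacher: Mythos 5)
Your proof is correct and takes essentially the same route as the paper's: the relabeling $x \mapsto n+1-x$ sends increasing relations of $I$ to decreasing relations of $\compl(I)$, yielding $\icinf(I) = \dcstep{0}(\compl(I))$ and $\icstep{i}(I) = \dcstep{n+1-i}(\compl(I))$, and the opposite reading conventions of $\IC$ and $\DC$ then make the two vectors agree entry by entry. The only difference is cosmetic: you spell out that the relabeling is a poset isomorphism preserving cover relations (hence sending children to children and roots to roots), a point the paper leaves implicit.
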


\begin{proof}
Every increasing relation $a \trprec_I b$ is sent to a decreasing relation $(n+1 - a) \trprec_{\compl(I)} (n+1 - b)$. In particular, each connected component of the initial forest of $I$ is sent to exactly one connected component of the final forest of $\compl(I)$ and so $\icinf(I) = \dcstep{0}(\compl(I))$. Now, if a vertex $b$ has $k$ increasing children in $I$, its image $(n+1-b)$ has $k$ decreasing children in $\compl(I)$ so $\icstep{b}(I) = \dcstep{n+1-b}(\compl(I))$. Remember that $\IC^*$ reads the numbers of increasing children in reverse order from $n$ to $2$ whereas $\DC^*$ reads the number of decreasing children in the natural order from $1 = n+1-n$ to $n-1 = n+1 -2$. We conclude that $\IC(I) = \DC(\compl(I))$. 
\end{proof}

\begin{Proposition}
\label{prop:complement-distance}
Let $I$ be an interval-poset, then $\distance(I) = \distance(\compl(I))$.

More precisely, $(a,b)$ is a Tamari inversion of $I$ if and only if $(n+1-b, n+1-a)$ is a Tamari inversion of $\compl(I)$. 
\end{Proposition}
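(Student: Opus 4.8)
The plan is to prove the sharper ``more precisely'' statement first and then deduce the equality of distances for free. By Proposition~\ref{prop:tamari-inversions}, $\distance(I)$ is the cardinality of $\TInv(I)$ and $\distance(\compl(I))$ is the cardinality of $\TInv(\compl(I))$, so it suffices to exhibit a bijection between these two sets. Writing $n = \size(I)$ and $\sigma(x) = n+1-x$, the map $\sigma$ is an order-reversing involution of $\{1, \dots, n\}$, and the definition of the complement reads exactly $i \trprec_{\compl(I)} j \Leftrightarrow \sigma(i) \trprec_I \sigma(j)$. Since $a < b$ implies $\sigma(b) < \sigma(a)$, the candidate map $(a,b) \mapsto (\sigma(b), \sigma(a)) = (n+1-b, n+1-a)$ sends admissible pairs to admissible pairs and is its own inverse up to the involution $\compl$; the whole content is to show it restricts to a bijection on Tamari inversions.

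The key step is to translate the two defining conditions of Definition~\ref{def:tamari-inversions} through $\sigma$. Set $a' = n+1-b$ and $b' = n+1-a$, so that $a' < b'$, and let $J = \compl(I)$. I would run each index $k'$ appearing in the conditions for $(a',b')$ in $J$ through the substitution $k' = \sigma(k) = n+1-k$. Because $\sigma$ reverses order, the range $a' \leq k' < b'$ becomes $a < k \leq b$, while $a' < k' \leq b'$ becomes $a \leq k < b$; and by the defining equivalence of the complement, $b' \trprec_J k'$ becomes $a \trprec_I k$ while $a' \trprec_J k'$ becomes $b \trprec_I k$. The pleasant outcome is that the two conditions \emph{swap}: the first condition for $(a',b')$ in $J$ (no $a' \leq k' < b'$ with $b' \trprec_J k'$) is precisely the second condition for $(a,b)$ in $I$, and the second condition for $(a',b')$ in $J$ is precisely the first condition for $(a,b)$ in $I$. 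Hence $(a,b) \in \TInv(I)$ if and only if $(n+1-b, n+1-a) \in \TInv(J)$.

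Since $\sigma$ is a bijection, $(a,b) \mapsto (n+1-b, n+1-a)$ is a bijection from $\TInv(I)$ onto $\TInv(\compl(I))$, so the two sets have equal cardinality and $\distance(I) = \distance(\compl(I))$ follows from Proposition~\ref{prop:tamari-inversions}. The only place demanding care is the bookkeeping of the index ranges under the order-reversing substitution: one must check that the weak and strict inequalities land correctly (for instance that $a' \leq k' < b'$ turns into $a < k \leq b$ and not into $a \leq k < b$), since it is exactly this asymmetry that makes the two conditions exchange rather than each being preserved. There is no genuine structural obstacle beyond this careful substitution.
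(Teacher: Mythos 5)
Your proof is correct and is essentially the same as the paper's: both translate the two conditions of Definition~\ref{def:tamari-inversions} through the order-reversing map $k \mapsto n+1-k$, observe that the two conditions swap (with the strict/weak inequalities exchanging exactly as you describe), and then conclude via Proposition~\ref{prop:tamari-inversions}. Your bookkeeping of the index ranges and of the relation translations matches the paper's two displayed equivalences, so there is nothing to correct.
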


\begin{proof}
Let $a < b$ be two vertices of $I$, we set $a' = n+1-b$ and $b' = n+1-a$. 
\begin{itemize}
\item There is $a \leq k <b$ with $b \trprec_I k$ if and only if there is $k' = n+1-k$ with $a' < k' \leq b'$ and $a' \trprec_{\compl(I)} k'$.
\item There is $a < k \leq b$ with $a \trprec_I k$ if and only if there is $k' = n+1-k$ with $a' \leq k' < b'$ and $b' \trprec_{\compl(I)} k'$.
\end{itemize}
In other words, $(a,b)$ is a Tamari inversion of $I$ if and only if $(a',b')$ is a Tamari inversion of $\compl(I)$. By Proposition~\ref{prop:tamari-inversions}, this gives us $\distance(I) = \distance(\compl(I))$.
\end{proof}

You can check on Figure~\ref{fig:compl} that $I$ has 3 Tamari inversions $(1,5)$, $(2,3)$, and $(2,5)$, which give respectively the Tamari inversions $(4,8)$, $(6,7)$, and $(4,7)$ in $\compl(I)$.  We are now able to state the following Theorem, which gives an explicit combinatorial proof of Theorem~\ref{thm:main-result-classical}. We give an example computation on Figure~\ref{fig:rise-contact-involution}. You can run more examples and compute tables for all intervals using the provided live {\tt Sage-Jupyter} notebook~\cite{JNotebook}.

\begin{figure}[ht]
\input{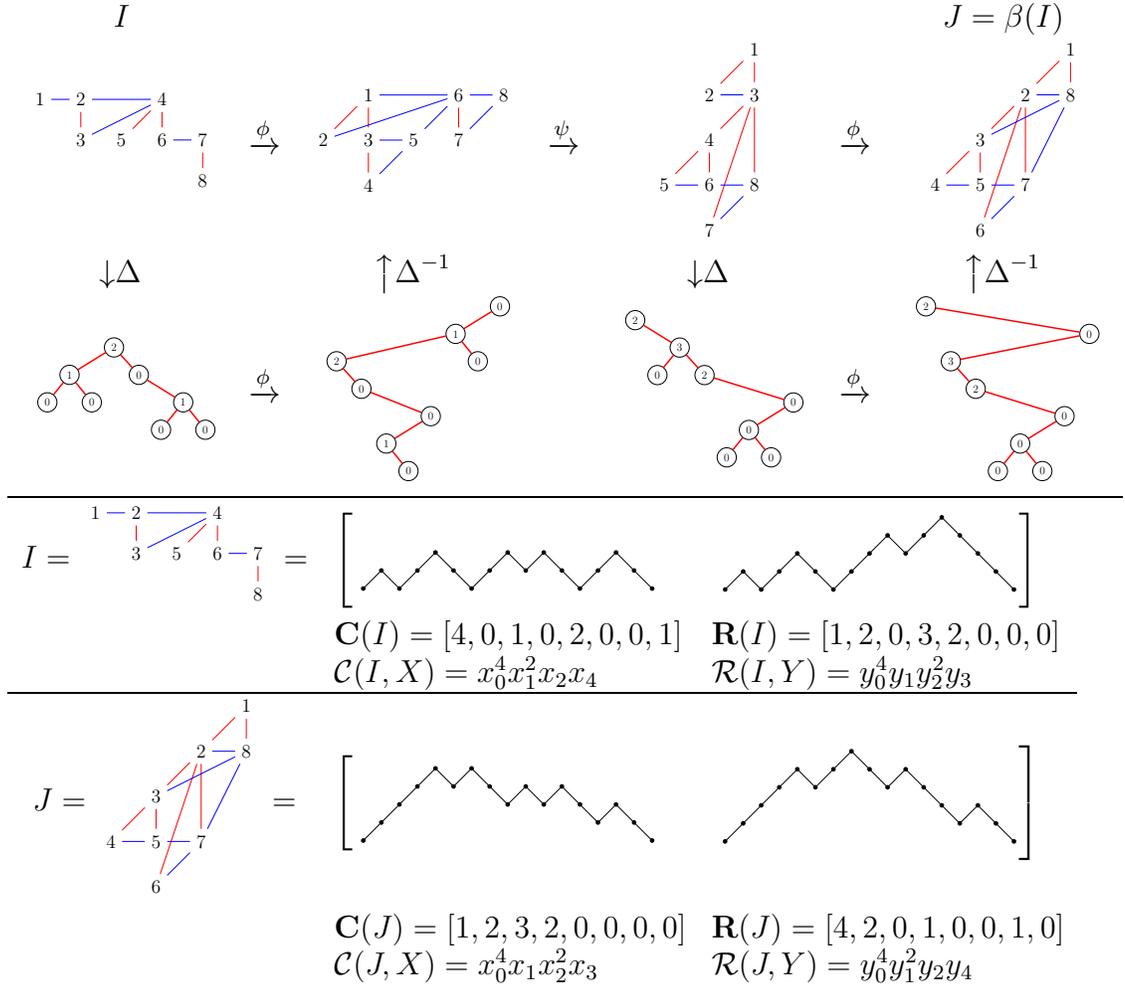}
\caption{The rise-contact involution on an example.}
\label{fig:rise-contact-involution}
\end{figure}

\begin{Theorem}[the rise-contact involution]
\label{thm:rise-contact-statistics}
Let $\risecontact$ be the \emph{rise-contact involution} defined by
\begin{align}
\risecontact = \leftbranch \circ \compl \circ \leftbranch.
\end{align}
Then $\risecontact$ is an involution on Tamari intervals such that, for an interval~$I$ and a commutative alphabet $X$,
\begin{align}
\label{eq:rise-contact-rises}
\rises(I) &= \contacts(\risecontact(I)); \\
\label{eq:rise-contact-partitions}
\risesP(I,X) &= \contactsP(\risecontact(I),X); \\
\label{eq:rise-contact-distance}
\distance(I) &= \distance(\risecontact(I)).
\end{align}
\end{Theorem}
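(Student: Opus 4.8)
The plan is to derive the whole statement from the properties of the two building blocks $\leftbranch$ and $\compl$ that are already in hand: Proposition~\ref{prop:leftbranch-statistics} (how $\leftbranch$ acts on $\contacts$, $\contactsP$, $\distance$, and $\risesV$), Proposition~\ref{prop:complement-ic-dc} ($\IC(I)=\DC(\compl(I))$), Proposition~\ref{prop:complement-distance} ($\distance$-invariance of $\compl$), and the identification $\DC=\contactsV$ from Proposition~\ref{prop:contact-dc}. First I would record that $\risecontact$ is a well-defined involution on Tamari intervals. Both factors preserve interval-posets ($\leftbranch$ because it is an involution on grafting trees, $\compl$ directly from its definition), and both are involutions ($\compl$ because applying $i\mapsto n+1-i$ twice is the identity). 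Hence $\risecontact\circ\risecontact=\leftbranch\circ\compl\circ(\leftbranch\circ\leftbranch)\circ\compl\circ\leftbranch=\leftbranch\circ(\compl\circ\compl)\circ\leftbranch=\leftbranch\circ\leftbranch=\mathrm{id}$.

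For the two rise--contact identities I would introduce the intermediate objects $I'=\leftbranch(I)$, $K=\compl(I')$, so that $\risecontact(I)=\leftbranch(K)=:J$, and chain the vector equalities. Equation~\eqref{eq:leftbranch-rises} gives $\risesV(I)=\IC(I')$; Proposition~\ref{prop:complement-ic-dc} gives $\IC(I')=\DC(\compl(I'))=\DC(K)$; and Proposition~\ref{prop:contact-dc} gives $\DC(K)=\contactsV(K)$. Thus $\risesV(I)=\contactsV(K)$ as integer vectors. Reading off the first coordinates yields $\rises(I)=\contacts(K)$, while, since $\risesP(I,X)$ and $\contactsP(K,X)$ are the commutative monomials built from the (equal) multisets of entries of these two vectors, $\risesP(I,X)=\contactsP(K,X)$.

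It then remains to transfer these from $K$ to $J=\leftbranch(K)$, which is exactly what the $\leftbranch$-invariance of $\contacts$ and $\contactsP$ provides: equations~\eqref{eq:leftbranch-contacts} and~\eqref{eq:leftbranch-contactsP} give $\contacts(K)=\contacts(J)$ and $\contactsP(K,X)=\contactsP(J,X)$. Combining, $\rises(I)=\contacts(J)=\contacts(\risecontact(I))$, which is~\eqref{eq:rise-contact-rises}, and $\risesP(I,X)=\contactsP(J,X)=\contactsP(\risecontact(I),X)$, which is~\eqref{eq:rise-contact-partitions}. The distance identity~\eqref{eq:rise-contact-distance} follows by applying in succession the $\leftbranch$-invariance~\eqref{eq:leftbranch-distance}, the $\compl$-invariance of Proposition~\ref{prop:complement-distance}, and~\eqref{eq:leftbranch-distance} once more:
\[
\distance(\risecontact(I))=\distance(\leftbranch(K))=\distance(K)=\distance(\compl(I'))=\distance(I')=\distance(I).
\]

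I expect no serious obstacle: the real content has been pushed into the earlier propositions, and what is left is a careful composition. The two points deserving attention are that $\risesP$ and $\contactsP$ are \emph{commutative} monomials, so that the equality of the vectors $\risesV(I)$ and $\contactsV(K)$ as multisets (rather than any coincidence of orderings) is exactly what is needed; and that the $\leftbranch$-transfer must be applied to the intermediate interval $K=\compl(\leftbranch(I))$ and not to $I$ itself, so that the bookkeeping of which statistic lives on which of $I$, $I'$, $K$, $J$ is kept straight throughout.
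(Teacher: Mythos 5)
Your proposal is correct and takes essentially the same route as the paper's own proof: the involution property by conjugation, the distance identity by composing the three invariances, and the rise--contact identities via the chain $\risesV(I)=\IC(\leftbranch(I))=\DC(\compl(\leftbranch(I)))=\contactsV(\compl(\leftbranch(I)))$ followed by the $\leftbranch$-invariance of $\contacts$ and $\contactsP$ to pass to $\risecontact(I)$. Your intermediate interval $K=\compl(\leftbranch(I))$ is exactly the one the paper works with, so the two arguments differ only in bookkeeping and direction of the equality chain.
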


\begin{proof}
The operation $\risecontact$ is clearly an involution because it is the conjugate of the complement involution $\compl$ by the left-branch involution~$\leftbranch$. We obtain \eqref{eq:rise-contact-distance} immediately as the distance is constant through $\leftbranch$ by~\eqref{eq:leftbranch-distance} and through $\compl$ by Proposition~\ref{prop:complement-distance}. Now, using Propositions~\ref{prop:leftbranch-statistics} and \ref{prop:complement-ic-dc},  we have
\begin{align}
\contacts(\risecontact(I)) &= \contacts(\leftbranch \circ \compl \circ \leftbranch (I)) = \contacts(\leftbranch \circ \compl(I)) = \dcstep{0}(\leftbranch \circ \compl(I)) \\
&= \icinf(\leftbranch(I)) = \rises(I),
\end{align}
which proves \eqref{eq:rise-contact-rises}. Now, by Proposition~\ref{prop:leftbranch-statistics}, we have that $\contactsV(\leftbranch \circ \compl(I))$ is a permutation of $\contactsV(\leftbranch \circ \compl \circ \leftbranch (I))$. We then use Proposition~\ref{prop:complement-ic-dc} and again Proposition~\ref{prop:leftbranch-statistics}
\begin{align}
\contactsV(\leftbranch \circ \compl(I)) &= \DC (\leftbranch \circ \compl(I)) = \IC(\leftbranch (I)) = \risesV(I).
\end{align}
This means that $\risesV(I)$ is a permutation of $\contactsV(\beta(I))$, and so, because $X$ is a commutative alphabet, \eqref{eq:rise-contact-partitions} holds.
\end{proof}

\begin{Remark}
The reader might notice at this point that the notion of Tamari interval-poset is not completely necessary to the definition of the rise-contact involution. Indeed, one novelty of this paper is the introduction of the grafting tree, which, we believe, truly encapsulates the recursive structure of the Tamari intervals. As an example, it is an interesting (and easy) exercise to recover the functional equation first described in \cite{Chap} and later discussed in \cite{IntervalPosetsInitial} using solely grafting trees. Nevertheless, please note that the rise-contact involution cannot be described using solely grafting trees. Indeed, grafting trees are the natural object to apply the left-branch involution but they do not behave nicely through the complement involution. In this case, the interval-posets turn out to be the most convenient object. The complement involution can also be described directly on intervals of binary trees but then it makes it more difficult to follow some statistics such as the distance. For these reasons, and also for convenience and reference to previous results, we have kept interval-posets central in this paper.
\end{Remark}

\begin{Remark}
In \cite{DecompBeta10} and \cite{InvolutionBeta10}, the authors describe an interesting involution on $(1,0)$ description trees that leads to the equi-distribution of certain statistics. Their bijection is described recursively through grafting and up-raising of trees. Some similar operations can be defined on $(1,1)$ description trees. An interesting question is then: is there a a direct description of the rise-contact involution on $(1,1)$ description trees? The answer is most probably \emph{yes}. Actually, this resumes to understanding the complement involution on $(1,1)$ description trees. We leave that for further research or curious readers. 
\end{Remark}

\section{The $m$-Tamari case}
\label{sec:mtam}

\subsection{Definition and statement of the generalized result}
\label{sec:mtam-def}

The $m$-Tamari lattices are a generalization of the Tamari lattice where objects have an $(m+1)$-ary structure instead of binary. They were introduced in \cite{BergmTamari} and can be described in terms of $m$-ballot paths. An $m$-ballot path is a lattice path from $(0,0)$ to $(nm,n)$ made from horizontal steps $(1,0)$ and vertical steps $(0,1)$, which always stays above the line $y=\frac{x}{m}$. When $m=1$, an $m$-ballot path is just a Dyck path where up-steps and down-steps have been replaced by respectively vertical steps and horizontal steps. They are well known combinatorial objects counted by the $m$-Catalan numbers
\begin{equation}
\frac{1}{mn + 1} \binom{(m+1)n}{n}.
\end{equation}

They can also be interpreted as words on a binary alphabet and the notion of \emph{primitive path} still holds. Indeed, a primitive path is an $m$-ballot path which does not touch the line $y=\frac{x}{m}$ outside its end points. From this, the definition of the rotation on Dyck path given in Section~\ref{sec:tamari} can be naturally extended to $m$-ballot-paths, see Figure~\ref{fig:mpath-rot}.

\begin{figure}[ht]
\centering
\input{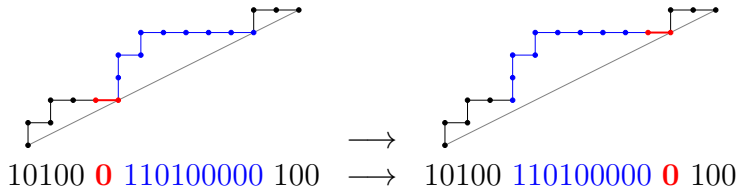}
\caption{Rotation on $m$-ballot paths.}
\label{fig:mpath-rot}
\end{figure}

When interpreted as a cover relation, the rotation on $m$-ballot paths induces a well-defined order, which is a lattice \cite{BergmTamari}. This is what we call the $m$-Tamari lattice or $\Tamnm$, see Figure \ref{fig:mTamari} for an example.

\begin{figure}[ht]

  \input{figures/mTamari-3-3-paths}
  
  \caption{$m$-Tamari on $m$-ballot paths: $\Tam{3}{2}$.}
  
  \label{fig:mTamari}
\end{figure}

The intervals of $m$-Tamari lattices have also been studied. In \cite{mTamari}, it was proved that they are counted by
\begin{equation}
\label{eq:m-intervals-formula}
I_{n,m} = \frac{m+1}{n(mn +1)} \binom{(m+1)^2 n + m}{n - 1}.
\end{equation} 
They were also studied in \cite{IntervalPosetsInitial} where it was shown that they are in bijection with some specific families of Tamari interval-posets. Our goal here is to use this characterization to generalize Theorem~\ref{thm:main-result-classical} to intervals of $m$-Tamari, thus proving Conjecture 17 of \cite{PRThesis}. First, let us introduce the $m$-statistics, which correspond to the classical cases statistics defined in Definition~\ref{def:contact-rise-dw}.

\begin{Definition}
\label{def:m-contact-rise-dw}
Let $B$ be an $m$-ballot path. We define the following $m$-\emph{statistics}.
\begin{itemize}
\item $\mcontacts(B)$ is the number of non-final contacts of the path $B$: the number of time the path $B$ touches
the line $y=\frac{x}{m}$ outside the last point.

\item $\mrises(B)$ is the initial rise of $B$: the number of initial consecutive vertical steps.

\item Let $u_i$ be the $i^{th}$ vertical step of $B$, $(a,b)$ the coordinate of its starting point and $j$ an integer such that $1 \leq j \leq m$. We consider the line $\ell_{i,j}$ starting at $(a, b + \frac{j}{m})$ with slope $\frac{1}{m}$ and the portion of path $d_{i,j}$ of $B$ which starts at $(a,b+1)$ and stays above the line $\ell_{i,j}$. From this, we define $\mcontactsStep{i,j}(B)$ the number of non-final contacts between $\ell_{i,j}$ and $d_{i,j}$.

\item Let $v_i$ be the $i^{th}$ horizontal step of $B$, we say that the number of consecutive vertical steps right after $v_i$ are the \emph{$m$-rises} of $v_i$ and write $\mrisesStep{i}(B)$.

\item $\mcontactsV(B) := (\mcontacts(B), \mcontactsStep{1,1}(B), \dots, \mcontactsStep{1,m}(B), \dots,  \mcontactsStep{n,1}(B), \dots, \mcontactsStep{n,m-1}(B))$ is the \emph{$m$-contact vector} of~$B$.

\item $\mrisesV(B) := (\mrises(B), \mrisesStep{1}(B), \dots, \mrisesStep{nm-1}(B))$ is the \emph{$m$-rise vector} of~$B$.

\item Let  $X = (x_0, x_1, x_2, \dots)$ be a commutative alphabet, we write $\mcontactsP(B,X)$ the monomial $x_{v_0}, \dots x_{v_{nm-1}}$ where $\mcontactsV(I) = (v_0, \dots, v_{nm-1})$ and we call it the \emph{$m$-contact monomial} of $B$.

\item Let  $Y = (y_0, y_1, y_2, \dots)$ be a commutative alphabet, we write $\risesP(B,Y)$ the monomial $y_{w_0}, \dots, y_{w_{nm-1}}$ where $\mrisesV(I) = (w_0, \dots, w_{nm-1})$ and we call it the \emph{$m$-rise monomial} of $B$.
\end{itemize}

Besides, we write $\size(B):=n$. An $m$-ballot path of size $n$ has $n$ vertical steps and $nm$ horizontal steps.
\end{Definition}

\begin{figure}[ht]
\begin{tabular}{cl}
$\begin{aligned}\scalebox{0.7}{\input{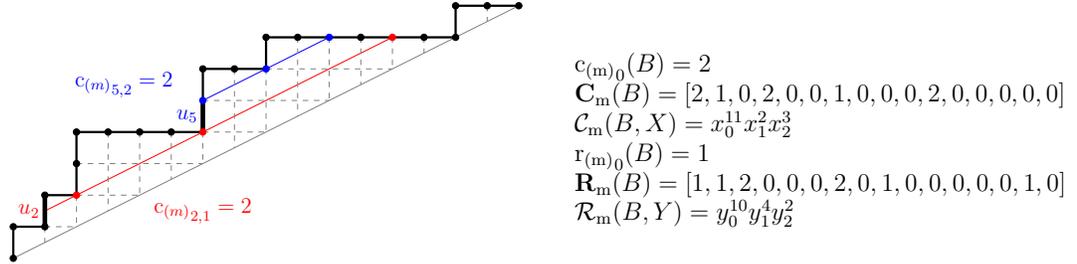}}\end{aligned}$
&
\scalebox{.8}{
\begin{tabular}{l}
$\mcontacts(B) = 2$ \\
$\mcontactsV(B) = \left[2,1,0,2,0,0,1,0,0,0,2,0,0,0,0,0 \right]$ \\
$\mcontactsP(B, X) =  x_0^{11} x_1^2 x_2^3$ \\
$\mrises(B) = 1$ \\
$\mrisesV(B) = \left[1,1,2,0,0,0,2,0,1,0,0,0,0,0,1,0 \right]$ \\
$\mrisesP(B,Y) = y_0^{10} y_1^4 y_2^2$
\end{tabular}
}
\end{tabular}

\caption{The $m$-contacts and $m$-rises of a ballot path.}
\label{fig:m-stat}
\end{figure}

An example is given on Figure~\ref{fig:m-stat}. When $m = 1$, this is the same as Definition~\ref{def:contact-rise-dw}. Note also that we will later define a bijection between $m$-ballot paths and certain families of Dyck paths which also extends to intervals: basically any element of $\Tamnm$ can also be seen as an element of $\Tamntm$ but the statistics are not exactly preserved, which is why we use slightly different notations for $m$-statistics to avoid any confusion.

Both $\mcontactsV(B)$ and $\mrisesV(B)$ are of size $nm$. Also, note that even though $\ell_{i,j}$ does not always starts at an integer point, 
the contacts with the subpath $d_{i,j}$ only happen at integer points. Because the final contact is not counted, it can happen that $\mcontactsStep{i,j} = 0$ even when $d_{i,j}$ is not reduced to a single point. Indeed, the initial point is a contact only when $j = m$. In this case, the definition of $\mcontactsStep{i,m}$ is similar to the classical case from Definition~\ref{def:contact-rise-dw}.

The $m$-rise vector somehow partitions the vertical steps and it is clear that $\sum_{0 \leq i \leq nm} \mrisesStep{i}(B) = n$. Actually, we also have $\sum_{0 \leq i \leq n; 1 \leq j \leq m} \mcontactsStep{i,j}(B) = n$. We see this through another description of the non-zero values of the vector which makes the relation to \cite[Conjecture 17]{PRThesis} explicit.

\begin{Proposition}
For each vertical step $u_i$ of an $m$-ballot path, let $a_i$ be the number of $1 \times 1$ squares that lies horizontally between the step $u_i$ and the line $y = \frac{x}{m}$. This gives us $a(B) = [a_1, \dots, a_n]$, the \emph{area vector} of~$B$. We partition the values of $a(B)$ such that $a_i$ and $a_j$ are in the same set if $a_i = a_j$ and for all $i'$ such that $i \leq i' \leq j$, then $a_{i'} \geq a_i$. Let $\lambda = (\lambda_1 \geq \lambda_2 \geq \dots \geq \lambda_k) $ be the integer partition obtained by keeping only the set sizes and let $e(B,X) = x_{\lambda_1} \dots x_{\lambda_k}$ a monomial on a commutative alphabet $X$. Then $e(B,X) = \mcontactsP(B,X)$ with $x_0 = 1$.
\end{Proposition}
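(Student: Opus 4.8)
The plan is to show that the two monomials coincide by proving that they record the same multiset of positive integers. Since $X$ is commutative and we set $x_0=1$, the monomial $\mcontactsP(B,X)$ becomes the product of $x_c$ over the \emph{nonzero} entries $c$ of $\mcontactsV(B)$, while $e(B,X)$ is by definition the product of $x_{\lambda_i}$ over the (necessarily positive) parts of $\lambda$. Hence it suffices to prove the equality of multisets: the multiset of block sizes of the area grouping equals the multiset of nonzero entries of $\mcontactsV(B)$.

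The crux is a reformulation of the contact statistics in terms of the area vector. First I would record the elementary identity $a_i = m(i-1)-x_i$, where $x_i$ is the abscissa of the starting point of the vertical step $u_i$; this follows from the definition of $a_i$ together with the fact that $u_i$ starts at height $i-1$. Writing the \emph{point-area} of a lattice point $(x,y)$ as $my-x$, a short computation shows that a point lies weakly above $\ell_{i,j}$ exactly when its point-area is at least $a_i+j$, and that for $k>i$ the step $u_k$ starts on $\ell_{i,j}$ if and only if $a_k=a_i+j$. Setting $v:=a_i+j$, this identifies $\mcontactsStep{i,j}(B)$ with a count of vertical steps of area exactly $v$ inside the maximal stretch of the path starting just after $u_i$ that keeps area $\geq v$, i.e. inside the maximal run of indices of area $\geq v$ beginning at $i+1$, which is nontrivial precisely when $a_i<v\leq a_{i+1}$.

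Next I would match these runs with the blocks of the area grouping. For a fixed level $v\geq 1$, the blocks at level $v$ are exactly the maximal runs of indices of area $\geq v$ containing at least one index of area $v$; each such run begins at some $p$ with $a_{p-1}<v$ (or $p=1$) and is anchored by the pair $(i,j)=(p-1,\,v-a_{p-1})$, for which $1\leq j\leq m$ since $v-a_{p-1}\leq a_p-a_{p-1}\leq m$ (area increases by at most $m$ between consecutive vertical steps). This gives a bijection between level-$v$ blocks ($v\geq 1$) and the pairs with $\mcontactsStep{i,j}(B)>0$: distinct blocks have distinct anchors, and conversely a positive $\mcontactsStep{i,j}$ forces $a_i<v\leq a_{i+1}$, so $(i,j)$ is the anchor of its run. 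The unique level-$0$ block collects all indices of area $0$; as each non-final contact of $B$ with the diagonal is the foot of a vertical step of area $0$ (and conversely), its size equals $\mcontacts(B)$, which accounts for the remaining nonzero entry of $\mcontactsV(B)$.

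The main obstacle is the precise bookkeeping making block size equal contact value. I would argue that the contacts of $d_{i,j}$ with $\ell_{i,j}$ occur only at lattice points of area $v$, and that such a point is the foot of an area-$v$ vertical step unless it is the point where the path leaves $\ell_{i,j}$ downward; since point-area drops by exactly $1$ at each horizontal step, such an exit point always exists and is the unique contact that is not a vertical-step foot. Excluding it is exactly what ``non-final'' prescribes, so $\mcontactsStep{i,j}(B)$ equals the number of area-$v$ vertical steps in the run, i.e. the block size; the companion fact that the initial point of $d_{i,j}$ is a contact only when $j=m$ is what keeps this count consistent at the start of the run. Assembling the level-$0$ contribution with the levels $v\geq 1$ then yields the equality of multisets, and hence $e(B,X)=\mcontactsP(B,X)$ with $x_0=1$.
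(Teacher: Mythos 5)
Your proposal is correct and takes essentially the same route as the paper's proof: both rest on the identity $my - x = a_i$ at the foot of a vertical step, so that each block of the area partition corresponds to a maximal section of the path lying weakly above one of the lines $\ell_{i,j}$ of slope $\frac{1}{m}$, whose non-final contacts are exactly the feet of the vertical steps of that area, the final contact being the downward exit point (or the end of the path). Your write-up is simply more detailed than the paper's sketch (explicit anchoring bijection, lattice-point argument, the level-$0$ block accounting for $\mcontacts(B)$); the only point left implicit is that a level-$v$ run with $v \geq 1$ cannot start at index $1$, which follows from $a_1 = 0$.
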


The definition of $e(B,X)$ comes from \cite[Conjecture 17]{PRThesis}. As an example, the area vector of the path from Figure~\ref{fig:m-stat} is $(0,1,2,4,2,4,4,0)$. The set partition is $\lbrace \lbrace a_1, a_8 \rbrace, \lbrace a_2 \rbrace, \lbrace a_3, a_5 \rbrace, \lbrace a_4 \rbrace, \lbrace a_6, a_7 \rbrace \rbrace$. In particular, the area vector always starts with a 0 and each new 0 corresponds to a contact between the path and the line. Here, we get $\lambda = (2,2,2,1,1)$, which indeed gives $e(B,X) = x_1 ^2 x_2^3 = \contactsP(B,X)$ at $x_0 = 1$.

\begin{proof}
If the step $u_i$ starts at a point $(x,y)$, then we have by definition $my = x + a_i$. In particular, if $a_i = a_j$, then $u_i$ and $u_j$ both have a contact with a same affine line $s$ of slope $\frac{1}{m}$. Then $a_i$ and $a_j$ belong to the same set in the partition if and only if the path between $u_i$ and $u_j$ stays above the line $s$. More precisely, the line $s$ cuts a section $p$ of the path, starting at some point $(a, b + \frac{j}{m})$ where $(a,b)$ is the starting point of a vertical step and $1 \leq j \leq m$. The non-final contacts of this path $p$ with the line $s$ are exactly the vertical steps $u_k$ with $a_k = a_i$. The final contact corresponds either to the end of the path $B$ or to a horizontal step: it does not correspond to an area $a_k = a_i$. 
\end{proof}

As for the classical case, we now extend those definitions to intervals of the $m$-Tamari lattice.

\begin{Definition}
\label{def:m-contact-rise-intervals}
Consider an interval $I$ of $\Tamnm$ described by two $m$-ballot paths $B_1$ and $B_2$ with $B_1 \leq B_2$. Then
\begin{enumerate}
\item $\mcontacts(I) = \mcontacts(B_1)$, $\mcontactsStep{i,j}(I):= \mcontactsStep{i,j}(B_1)$ for $1 \leq i \leq n$ and $1 \leq j \leq m$, $\mcontactsV(I):=\mcontactsV(B_1)$, and $\mcontactsP(I,X):=\mcontactsP(B_1,X)$;

\item $\mrisesStep{i}(I):=\mrisesStep{i}(B_2)$ for $0 \leq i \leq mn$, $\mrisesV(I):=\mrisesV(B_2)$, and $\mrisesP(I,Y) := \mrisesP(B_2,Y)$.
\end{enumerate}
To summarize, all the statistics we defined on $m$-ballot paths are extended to $m$-Tamari intervals by looking at the \emph{lower bound} $m$-ballot path $B_1$ when considering contacts and the \emph{upper bound} $m$-ballot path $B_2$ when considering rises. 

Besides, we write $\size(I)$ the size $n$ of the $m$-ballot paths $B_1$ and $B_2$.
\end{Definition}

Finally, the definition of \emph{distance} naturally extends to $m$-Tamari.

\begin{Definition}
\label{def:m-distance}
Let $I = [B_1, B_2]$ be an interval of $\Tamnm$. We call the \emph{distance} of $I$ and write $\distance(I)$ the maximal length of all chains between $B_1$ and $B_2$ in the $m$-Tamari lattice. 
\end{Definition} 

We can now state the generalized version of Theorem~\ref{thm:main-result-classical}.

\begin{Theorem}[general case]
\label{thm:main-result-general}
Let $x,y,t,q$ be variables and $X = (x_0, x_1, x_2, \dots)$ and $Y = (y_0, y_1, y_2, \dots)$ be commutative alphabets. Consider the generating function

\begin{equation}
\Phi_m(t; x, y, X, Y, q) = \sum_{I} t^{\size(I)} x^{\mcontacts(I)} y^{\mrises(I)} \mcontactsP(I,X) \mrisesP(I,Y) q^{\distance(I)}
\end{equation}  

summed over all intervals of the $m$-Tamari lattices. Then, for all $m$, we have

\begin{equation}
\Phi_m(t; x, y, X, Y, q) = \Phi_m(t; y, x, Y, X, q).
\end{equation}
\end{Theorem}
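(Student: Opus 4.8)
The plan is to reduce Theorem~\ref{thm:main-result-general} to the construction of an $m$-analogue of the rise-contact involution, exactly as Theorem~\ref{thm:main-result-classical} follows from Theorem~\ref{thm:rise-contact-statistics}. Concretely, I would build an involution $\mrisecontact$ on intervals of $\Tamnm$ that preserves both $\size$ and $\distance$ and satisfies $\mcontacts(\mrisecontact(I)) = \mrises(I)$ together with $\mcontactsP(\mrisecontact(I),X) = \mrisesP(I,X)$. Granting such an involution, the theorem is immediate: reindexing the sum defining $\Phi_m$ by $I \mapsto \mrisecontact(I)$ fixes the factors $t^{\size(I)}$ and $q^{\distance(I)}$ while exchanging the $m$-contact data (carrying $x$ and $X$) with the $m$-rise data (carrying $y$ and $Y$), which is precisely $\Phi_m(t;x,y,X,Y,q) = \Phi_m(t;y,x,Y,X,q)$.

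To build $\mrisecontact$, I would first invoke the bijection of \cite{IntervalPosetsInitial} identifying $m$-Tamari intervals of size $n$ with the subfamily of Tamari interval-posets of size $mn$ satisfying the appropriate $m$-condition, and describe this subfamily through grafting trees. This should yield a notion of \emph{$m$-grafting tree}: a labeled binary tree of size $mn$ whose shape is the image of an $(m+1)$-ary tree under the classical correspondence and whose labeling $\ell$ is constrained compatibly with the $n$ blocks of $m$ consecutive vertices. The next step is to express the $m$-statistics in terms of this data, generalizing the classical case: I expect $\mcontactsP(I,X)$ to be read off from the labels $\ell$ as in Proposition~\ref{prop:grafting-tree-contact}, $\mrisesV(I)$ to coincide with the $m$-analogue of $\IC$, and $\distance(I)$ to be the sum over vertices of the deficits $d_i = \size(T_R(v_i)) - \labels(T_R(v_i)) - \ell(v_i)$ as in Proposition~\ref{prop:grafting-tree-distance}. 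The delicate point is the refined statistic $\mcontactsStep{i,j}$ for $j < m$, which can vanish even when its subpath is nontrivial; I would check that, once grouped into the $m$-blocks dictated by the $(m+1)$-ary structure, these refined contacts reassemble into the block labels of the $m$-grafting tree.

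I would then set $\mrisecontact = \leftbranch \circ \compl \circ \leftbranch$, the same recipe as in Theorem~\ref{thm:rise-contact-statistics}, and verify that each factor preserves the $m$-subfamily. The left-branch involution acts on the grafting tree only by reversing right-hanging trees along left branches; since the $m$-condition is phrased through right subtrees and blocks that are stable under this reversal, it preserves $m$-grafting trees, and on the upper bound it is the reversal of the $m$-ballot path. The complement $\compl$ implements the order-reversing up-down symmetry, which restricts to $\Tamnm$, so $\compl$ should map the $m$-subfamily to itself. Granting preservation of the subfamily, the statistic identities then follow from the $m$-analogues of Propositions~\ref{prop:leftbranch-statistics}, \ref{prop:complement-ic-dc}, and \ref{prop:complement-distance}: $\leftbranch$ fixes $\mcontacts$, $\mcontactsP$, and $\distance$ while sending $\mrisesV$ to the $m$-analogue of $\IC$, and $\compl$ exchanges that with the $m$-analogue of $\DC$ (which records $\mcontactsP$) while fixing $\distance$. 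Composing yields $\mcontacts(\mrisecontact(I)) = \mrises(I)$ and $\mcontactsP(\mrisecontact(I),X) = \mrisesP(I,X)$ with $\distance$ preserved, completing the reduction of the first paragraph.

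The main obstacle I anticipate is proving that $\compl$ and $\leftbranch$ genuinely stabilize the $m$-subfamily, i.e. that $\mrisecontact$ sends $m$-Tamari intervals to $m$-Tamari intervals. For the left-branch step this is essentially bookkeeping on the grafting tree, but for the complement it demands a clean characterization of the $m$-subfamily \emph{in interval-poset terms} that is manifestly invariant under the order-reversing symmetry; pinning down this characterization and matching it against the block structure of the $m$-grafting tree is where the real work lies. A secondary difficulty is the careful identification of the refined $\mcontactsStep{i,j}$ with grafting-tree labels, since these are defined through the auxiliary lines $\ell_{i,j}$ and behave slightly differently from the classical contacts of Definition~\ref{def:contact-rise-dw}.
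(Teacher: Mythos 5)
Your opening reduction (build an involution $\mrisecontact$ on $m$-Tamari intervals exchanging the $m$-contact and $m$-rise data while preserving $\size$ and $\distance$, then reindex the sum) is exactly the paper's strategy. But your construction of $\mrisecontact$ has a fatal gap: the family of $m$-interval-posets is \emph{not} stable under $\compl$, nor under $\leftbranch$, nor under their composite $\risecontact = \leftbranch \circ \compl \circ \leftbranch$ — a point the paper makes explicitly right after Definition~\ref{def:m-divisible}. For the complement, take $n=1$, $m=2$: the unique $m$-interval-poset on $\lbrace 1,2 \rbrace$ has the single relation $2 \trprec 1$, and $\compl$ sends it to the poset with the single relation $1 \trprec 2$, which violates condition \eqref{eq:m-condition}; in general the $m$-condition consists of \emph{decreasing} relations and $\compl$ turns them into increasing ones. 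Your claim that the up-down symmetry ``restricts to $\Tamnm$'' confuses the lattice with its embedding: $\Tamnm$ sits inside $\Tamntm$ as the upper ideal of rise-$m$-divisible Dyck paths, and the symmetry maps that ideal to the family of paths with $m$-divisible \emph{descents}, which is different when $m>1$. For the left-branch step, $\leftbranch$ acts on the upper bound as Dyck-path reversal, which destroys rise-$m$-divisibility: the $2$-Dyck path $11011000$ reverses to $11100100$, whose initial rise is $3$. On grafting trees the same failure appears because $\leftbranch$ permutes the in-order positions, so positions $i \not\equiv 0 \pmod{m}$ are not sent to positions $i' \not\equiv 0 \pmod{m}$ and the condition $\ell(v_i) \geq 1$ is not preserved. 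Structurally: $m$-interval-posets are rise-$m$-divisible but generally not contact-$m$-divisible, and $\risecontact$ exchanges the two vectors up to permutation, so the image of an $m$-interval-poset is generally not rise-$m$-divisible and hence not an $m$-interval-poset.

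The missing idea — the paper's main new contribution in Section~\ref{sec:mtam} — is the $\expand$/$\contract$ bijection of Proposition~\ref{prop:m-exp-cont}: keep the tree shape of the $m$-grafting tree and replace the labels by $\ell'(v_i) = m\,\ell(v_i)$ when $i \equiv 0 \pmod{m}$ and $\ell'(v_i) = m(\ell(v_i)-1)$ otherwise. This maps $m$-interval-posets bijectively onto the \emph{rise-contact-$m$-divisible} interval-posets, which (unlike the $m$-family) do form a $\risecontact$-stable set; the correct involution is then the conjugate $\mrisecontact = \contract \circ \risecontact \circ \expand$ of Theorem~\ref{thm:m-rise-contact-involution}, not $\risecontact$ itself. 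This conjugation also resolves the two issues you flagged as secondary but did not solve: the refined statistics satisfy $\mcontactsStep{i,j}(I) = \contactsStep{(i-1)m+j}(\tI) - 1$ for $j < m$ (the $-1$ is exactly what the rule $\ell \mapsto m(\ell - 1)$ absorbs), and $\expand$ does not preserve distance but rescales it affinely, $\distance(T,\ell') = m\,\distance(T,\ell) + nm(m-1)/2$ by Proposition~\ref{prop:m-exp-distance}, a transformation independent of the interval and therefore undone by $\contract$, which is why $\distance$ survives the conjugation.
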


We will give a combinatorial proof of this result, describing an involution on intervals of $m$-Tamari lattices which uses the classical case $\risecontact$ involution defined in Theorem~\ref{thm:rise-contact-statistics}. First, we will recall and reinterpret some results of \cite{IntervalPosetsInitial}. In particular, we recall how intervals of the $m$-Tamari lattice can be seen as interval-posets.

\subsection{$m$-Tamari interval-posets}

The $m$-Tamari lattice $\Tamnm$ is trivially isomorphic to an upper ideal of the classical Tamari lattice $\Tamntm$.

\begin{Definition}
\label{m-dyck-paths}
Let $B$ be an $m$-ballot path, we construct the Dyck path $\DD(B)$ by replacing every vertical step of $B$ by $m$ up-steps and every horizontal step of $B$ by a down-step. The set of such images are called the $m$-Dyck paths.
\end{Definition}

\begin{figure}[ht]
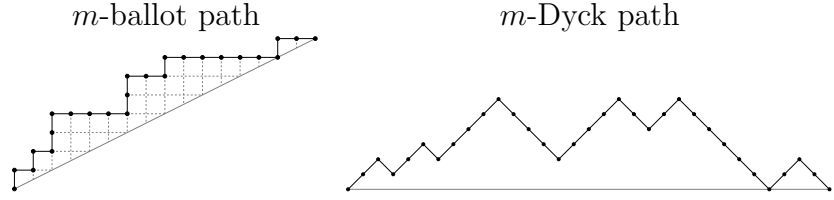

\centering

\def \fpath{figures/}

\begin{tabular}{cc}
$m$-ballot path & $m$-Dyck path \\
\scalebox{0.25}{\input{\fpath mpaths/P8-2-ex1}} &
\scalebox{0.2}{\input{\fpath dyck/D16-ex1}}
\end{tabular}
\caption{A $2$-ballot path and its corresponding $2$-Dyck path.}
\label{fig:m-dyck}
\end{figure}

See Figure~\ref{fig:m-dyck} for an example. The $m$-Dyck paths have a trivial characterization: they are the Dyck paths whose rises are divisible by~$m$. In other words, a Dyck path $D$ is an $m$-Dyck path if and only if all values of $\risesV(D)$ are divisible by $m$. We say that they are \emph{rise-$m$-divisible}: the set of $m$-Dyck paths is exactly the set of rise-$m$-divisible Dyck paths. Besides, the set of $m$-Dyck paths is stable by the Tamari rotation. More precisely, they correspond to the upper ideal generated by the Dyck path $(1^m0^m)^n$ which is the image of the initial $m$-ballot path of $\Tamnm$, see Figure~\ref{fig:minimal-m-tam} for an example and \cite{mTamari} for more details.

\begin{figure}[ht]

  \input{figures/minimal-m-tam}

  \caption{Minimal element of $\Tam{3}{2}$.}

  \label{fig:minimal-m-tam}
\end{figure}

We can read the $m$-statistics of an $m$-ballot path on its corresponding $m$-Dyck path.

\begin{Proposition}
\label{prop:m-ballot-m-dyck}
Let $B$ be an $m$-ballot path of size $n$ and $D = \DD(B)$ then
\begin{align}
\mrisesStep{i}(B) &= \frac{1}{m}\risesStep{i}(D) & \text{ for } 0 \leq i \leq nm; \\
\mcontacts(B) &= \contacts(D); \\
\mcontactsStep{i,m}(B) &= \contactsStep{im}(D) & \text{ for } 1 \leq i \leq n; \\
\mcontactsStep{i,j}(B) &= \contactsStep{(i-1)m+j}(D) - 1 & \text{ for } 1 \leq i \leq n \text{ and } 1 \leq j < m.
\end{align}
\end{Proposition}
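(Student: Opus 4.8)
The plan is to reduce every equality to a single bookkeeping device: the \emph{scaled height} $h(x,y) := my - x$ of a lattice point $(x,y)$ on the $m$-ballot path $B$. First I would check that under $\DD$ the scaled height of $B$ is carried exactly to the ordinary height of the Dyck path $D = \DD(B)$. Indeed, a vertical step sends $(x,y) \mapsto (x, y+1)$ and so raises $h$ by $m$, while $\DD$ replaces it by $m$ up-steps, each raising the Dyck height by $1$; a horizontal step sends $(x,y) \mapsto (x+1,y)$ and so lowers $h$ by $1$, while $\DD$ replaces it by one down-step. A telescoping (equivalently, an induction along the steps) then shows that at every corner of $B$ the value of $h$ equals the Dyck height of the image corner of $D$, and that the condition $h \ge 0$ (staying above $y = x/m$) corresponds to $D$ staying above $y = 0$. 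The one point requiring care is that $D$ has extra vertices in the interior of each run of $m$ up-steps; since such a run starts at some height $h_0 \ge 0$, these interior vertices sit at heights $h_0 + 1, \dots, h_0 + m \ge 1$, so they are never at height $0$ and, more generally, never lie on a baseline below their run.

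The rise identity is then immediate. The down-steps of $D$ are in order-preserving bijection with the horizontal steps of $B$, and a maximal run of consecutive vertical steps of $B$ directly after the $i$-th horizontal step becomes, under $\DD$, a run of $m$ times as many up-steps directly after the $i$-th down-step of $D$; hence $\risesStep{i}(D) = m\,\mrisesStep{i}(B)$ for $1 \le i \le nm$, and likewise $\rises(D) = m\,\mrises(B)$ for the initial rise ($i = 0$). This gives the first equation.

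For the contacts I would use the height correspondence to locate contact points. The contacts of $B$ with $y = x/m$ are exactly its corners with $h = 0$, which by the previous step are exactly the height-$0$ corners of $D$ (the interior run-vertices being excluded since they have height $\ge 1$); discarding the common final point yields $\mcontacts(B) = \contacts(D)$. For the refined statistics, note that the $i$-th vertical step of $B$ becomes up-steps number $(i-1)m+1, \dots, im$ of $D$, and that right after up-step $(i-1)m+j$ the path sits at height $h_0 + j$, where $h_0$ is the scaled height at the foot $(a,b)$ of the vertical step. This level $h_0 + j$ is precisely the scaled height of the line $\ell_{i,j}$, so the maximal Dyck subpath of $D$ starting after up-step $(i-1)m+j$ traces the same portion of path as $d_{i,j}$, except that it first climbs the remaining $m-j$ up-steps from height $h_0 + j$ up to the point $(a,b+1)$ at height $h_0 + m$ before following $d_{i,j}$, and both terminate at the same corner (the first return below $h_0 + j$).

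The heart of the argument, and the step I expect to be the main obstacle, is matching the endpoint conventions in this last comparison. When $j = m$ there is no initial climb: the subpath already starts at $(a,b+1)$, which lies on $\ell_{i,m}$, so its non-final contacts with the baseline coincide one-for-one with the non-final contacts of $d_{i,m}$ with $\ell_{i,m}$, giving $\mcontactsStep{i,m}(B) = \contactsStep{im}(D)$. When $j < m$, the starting corner of the maximal Dyck subpath is itself a contact at the baseline $h_0 + j$, whereas the starting corner $(a,b+1)$ of $d_{i,j}$ sits strictly above $\ell_{i,j}$ (at height $h_0 + m > h_0 + j$) and is therefore \emph{not} a contact; all remaining contacts, and the final one, match. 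Thus the Dyck count exceeds the ballot count by exactly this one extra initial contact, yielding $\mcontactsStep{i,j}(B) = \contactsStep{(i-1)m+j}(D) - 1$. I would want to be especially careful that ``non-final contact'' consistently counts the initial corner but not the terminal one, since it is exactly this asymmetry that produces the clean $j = m$ versus $j < m$ dichotomy and the $-1$ correction.
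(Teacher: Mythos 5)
Your proof is correct and takes essentially the same route as the paper: your scaled height $h(x,y) = my - x$ is exactly the second coordinate of the paper's map $(x,y) \mapsto (my+x,\, my-x)$, under which contacts with lines of slope $\frac{1}{m}$ become contacts with horizontal lines, and your $-1$ correction for $j < m$ arises from the same extra initial contact (the non-integer starting point of $\ell_{i,j}$ becoming a lattice corner) that the paper identifies. The only difference is that you spell out the rise bijection and the endpoint bookkeeping that the paper's proof leaves terse.
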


\begin{proof}
The result is clear for rises. For contacts, note that the $m$-Dyck path can be obtained from the ballot path by sending every point $(x,y)$ of the ballot path to $(my + x, my - x)$. In particular, every contact point between the ballot path and a line of slope $\frac{1}{m}$ is sent to a contact point between the $m$-Dyck path and a horizontal line. When $j\neq m$, the line $\ell_{i,j}$ starts at a non-integer point $(a, b+\frac{j}{m})$ which becomes $(mb + j + a, mb + b -a)$ in the $m$-Dyck path: it now counts for one extra contact when computing $\contactsStep{(i-1)m + j}$ in the $m$-Dyck path. 
\end{proof}

For example, look at Figure~\ref{fig:m-dyck} and its $m$-contact vector on Figure~\ref{fig:m-stat}. The contact vector of its corresponding $2$-Dyck path is given by $\contactsV(D) = (2, \red{2}, 0, \red{3}, 0, \red{1}, 1, \red{1}, 0, \red{1}, 2, \red{1}, 0, \red{1}, 0, \red{1})$: for each even position, the number is the same and for each odd position (in red) the number is increased by 1. The rise-vector of the $m$-Dyck path is $\risesV(D) = (2, 2, 4, 0,0, 0, 4, 0, 2, 0,0,0,0,0, 2, 0)$: it is indeed the $m$-rise-vector of Figure~\ref{fig:m-stat} multiplied by 2.

As the $m$-Tamari lattice can be understood as an upper ideal of the Tamari lattice, it follows that the intervals of $\Tamnm$ are actually a certain subset of intervals of $\Tamntm$: they are the intervals whose both upper and lower bounds are $m$-Dyck paths (in practice, it is sufficient to check that the lower bound is an $m$-Dyck path). It is then possible to represent them as interval-posets. This was done in \cite{IntervalPosetsInitial} where the following characterization was given.

\begin{Definition}
\label{def:m-interval-posets}
An $m$-interval-poset is an interval-poset of size $n\times m$ with
\begin{align}
\label{eq:m-condition}
i m \trprec i m-1 \trprec \dots \trprec i m-(m-1)
\end{align}
for all $1 \leq i \leq n$.
\end{Definition}

\begin{Theorem}[Theorem 4.6 of \cite{IntervalPosetsInitial}]
\label{thm:m-interval-posets}
The $m$-interval-posets of size $n \times m$ are in bijection with intervals of $\Tam{n}{m}$.
\end{Theorem}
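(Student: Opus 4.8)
The plan is to obtain the bijection by simply restricting the interval-poset bijection of \cite[Thm.~2.8]{IntervalPosetsInitial} and identifying, among all interval-posets of size $nm$, exactly those corresponding to intervals of $\Tamnm$. Recall from the discussion preceding Definition~\ref{def:m-interval-posets} that, under the isomorphism of $\Tamnm$ with an upper ideal of $\Tamntm$ furnished by $\DD$ (Definition~\ref{m-dyck-paths}), the intervals of $\Tamnm$ are precisely the intervals $[D_1,D_2]$ of $\Tamntm$ whose lower bound $D_1$ is an $m$-Dyck path; the upper bound $D_2$ is then automatically an $m$-Dyck path, these forming an upper ideal. So, writing $P$ for the interval-poset of $[D_1,D_2]$, it suffices to prove that $P$ satisfies condition \eqref{eq:m-condition} if and only if $D_1$ is an $m$-Dyck path, \emph{i.e.} is rise-$m$-divisible.

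First I would observe that \eqref{eq:m-condition} only constrains the decreasing relations of $P$: in each relation $im \trprec im-1 \trprec \dots \trprec im-(m-1)$ the larger label precedes the smaller one, so the whole chain lives in the final forest $\dec(P)$, which by the interval-poset construction equals $\dec(D_1)$. Thus the condition can be checked directly on the nesting poset of $D_1$ produced by Proposition~\ref{prop:dyck-dec-forest}.

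The technical heart is a single local lemma about that nesting poset: for two consecutive labels, the pair labeled $a+1$ is nested in the pair labeled $a$ if and only if the up-steps carrying labels $a$ and $a+1$ are consecutive steps of $D_1$ (they belong to the same rise). The easy direction is clear; for the converse I would argue that if a down-step separates up-step $a$ from up-step $a+1$, then---since no up-step lies between them---that down-step is the matching down-step of $a$, so the pair $a$ closes before the pair $a+1$ opens and no nesting can occur. Granting this lemma, the chain in \eqref{eq:m-condition} for a fixed $i$ is equivalent to saying that the $m$ up-steps labeled $(i-1)m+1,\dots,im$ are consecutive with no intervening down-step.

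Finally I would assemble these blockwise statements over all $i$. Requiring the chain for every $i$ means every down-step of $D_1$ falls at a block boundary, so every maximal rise is a union of length-$m$ blocks and hence has length divisible by $m$; conversely, if every maximal rise has length divisible by $m$, then any up-step ending a rise carries a label that is a multiple of $m$, which forces each block of $m$ consecutive labels to lie inside a single rise and thereby supplies all the required nestings. This establishes the equivalence with rise-$m$-divisibility, and combined with the two reductions above it shows that the interval-poset bijection restricts to a bijection between $m$-interval-posets and intervals of $\Tamnm$. The main obstacle is the local nesting lemma together with the bookkeeping that turns ``consecutive-label nestings for every $i$'' into the global statement that all rises are $m$-divisible; everything else is a direct application of the already-established bijection and of Proposition~\ref{prop:dyck-dec-forest}.
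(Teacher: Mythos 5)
Your argument is correct, but it takes a genuinely different route from the proof the paper relies on. The paper does not reprove Theorem~\ref{thm:m-interval-posets}: it cites it from~\cite{IntervalPosetsInitial} and explains that the proof there hinges on $m$-binary trees (Definition~\ref{def:m-binary-tree}) and their $(m+1)$-ary recursive decomposition into $T_L, T_{R_1}, \dots, T_{R_m}$, the bijection being built by induction along that decomposition. You instead restrict the bijection of \cite[Thm.~2.8]{IntervalPosetsInitial} and characterize directly which interval-posets arise: since every relation in \eqref{eq:m-condition} is decreasing, the condition lives entirely in $\dec(D_1)$, the nesting poset of the lower bound given by Proposition~\ref{prop:dyck-dec-forest}; your local lemma (the pair labeled $a+1$ is nested in the pair labeled $a$ if and only if up-steps $a$ and $a+1$ are adjacent, the converse resting on the observation that the first down-step after up-step $a$ is its matching down-step) turns the chain for block $i$ into ``the up-steps $(i-1)m+1,\dots,im$ form a consecutive run,'' and the blockwise bookkeeping correctly identifies this, over all $i$, with rise-$m$-divisibility of $D_1$. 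Combined with the fact, stated in Section~\ref{sec:mtam-def}, that intervals of $\Tamnm$ are exactly the intervals of $\Tamntm$ whose lower bound is an $m$-Dyck path (the upper bound then lying automatically in the upper ideal), this proves the theorem; I see no gap. The trade-off between the two approaches is worth noting: yours is shorter, self-contained, and makes transparent why only the lower bound needs to be tested, whereas the recursive route via $m$-binary trees is heavier but delivers the $(m+1)$-ary structural decomposition that the present paper needs anyway downstream --- Proposition~\ref{prop:m-graft-m-binary} and the expand/contract argument of Proposition~\ref{prop:m-exp-cont} are inductions on precisely that structure --- so within this paper the extra machinery amortizes, while your argument would serve better as a stand-alone proof of the characterization.
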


On Figure~\ref{fig:m-big-example}, you can see two examples of $m$-interval-posets with $m = 2$ and their corresponding $m$-ballot paths. To construct the interval-posets, you convert the ballot paths into $m$-Dyck paths and use the classical constructions of Propositions~\ref{prop:dyck-dec-forest} and \ref{prop:dyck-inc-forest}. You can check that the result agrees with Definition~\ref{def:m-interval-posets}: for all $k$, $2k \trprec 2k-1$. The proof that it is a bijection uses the notion of $m$-binary trees. These are the binary trees of size $nm$ which belong to the upper ideal of $\Tamntm$ corresponding to the $m$-Tamari lattice. This ideal is generated by the binary tree image of the initial $m$-Dyck path through the bijection of Definition~\ref{def:dyck-tree} as shown in Figure~\ref{fig:minimal-m-tam}. The $m$-binary trees have a $(m+1)$-ary recursive structure: this is the key element to prove Theorem~\ref{thm:m-interval-posets} and we will also use it in this paper. 

\begin{Definition}
\label{def:m-binary-tree}
The $m$-binary trees are defined recursively by being either the empty binary tree or a binary tree $T$ of size $m \times n$ constructed from $m+1$ subtrees $T_L, T_{R_1}, \dots, T_{R_m}$ such that
\begin{itemize}
\item the sum of the sizes of $T_L, T_{R_1}, \dots, T_{R_m}$ is $mn - m$;
\item each subtree $T_L, T_{R_1}, \dots, T_{R_m}$ is itself an $m$-binary tree;
\item and $T$ follows the structure bellow.
\end{itemize}

\begin{center}
\scalebox{0.6}{
\input{figures/m-binary-trees}
}
\end{center}

The left subtree of $T$ is $T_L$. The right subtree of $T$ is constructed from $ T_{R_1}, \dots, T_{R_m}$ by the following process: graft a an extra node to the left of the leftmost node of $T_{R_1}$, then graft $T_{R_2}$ to the right of this node, then graft an extra node to the left of the leftmost node of $T_{R_2}$, then graft $T_{R_3}$ to the right of this node, and so on.

Note that in total, $m$ extra nodes were added: we call them the $m$-roots of $T$.
\end{Definition}

Figure~\ref{fig:mbinary} gives two examples of $m$-binary trees for $m=2$ with their decompositions into 3 subtrees. More examples and details about the structure can be found in \cite{IntervalPosetsInitial}. In particular, $m$-binary trees are the images of $m$-Dyck paths through the bijection of Definition~\ref{def:dyck-tree}.

\begin{figure}[ht]
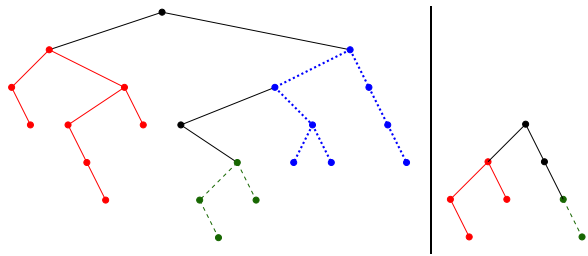

\centering
\begin{tabular}{c|c}
\scalebox{0.5}{\input{figures/mbinary-example}}
&
\scalebox{0.5}{\input{figures/mbinary-example-2}}
\end{tabular}
\caption{Examples of $m$-binary trees for $m=2$: $T_L$ is in red, $T_{R_1}$ is in dotted blue and $T_{R_2}$ is in dashed green. In the second example, $T_{R_1}$ is empty.}
\label{fig:mbinary}
\end{figure}

When working on the classical case, we could safely identify an interval of the Tamari lattice and its representing interval-poset. For $m\neq 1$, we need to be a bit more careful and clearly separate the two notions. Indeed, the $m$-statistics from Definition~\ref{def:m-contact-rise-intervals} of an interval of $\Tamnm$ are not equal to the statistics of its corresponding interval-poset from Definition~\ref{def:contact-rise-intervals}. They can anyway be retrieved through simple operations.

\begin{Proposition}
\label{prop:m-statistics-interval-posets}
Let $I$ be an interval of $\Tamnm$, and $\tI$ its corresponding interval-poset of size $nm$. Then
\begin{align}
\mrisesStep{i}(I) &= \frac{1}{m}\risesStep{i}(\tI) & \text{ for } 0 \leq i \leq nm; \\
\mcontacts(I) &= \contacts(\tI); \\
\mcontactsStep{i,m}(I) &= \contactsStep{im}(\tI) & \text{ for } 1 \leq i \leq n; \\
\mcontactsStep{i,j}(I) &= \contactsStep{(i-1)m+j}(\tI) - 1 & \text{ for } 1 \leq i \leq n \text{ and } 1 \leq j < m; \\
\label{eq:m-distance}
\distance(I) &= \distance(\tI).
\end{align}
\end{Proposition}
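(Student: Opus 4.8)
The plan is to split the proposition into two essentially independent parts: the four identities relating rises and contacts, which I expect to follow mechanically, and the distance identity $\distance(I) = \distance(\tI)$, which carries the real content.

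For the rise and contact identities, the strategy is simply to chase definitions through the bijection $\DD$. Write $I = [B_1, B_2]$ with $B_1 \leq B_2$ two $m$-ballot paths. By Theorem~\ref{thm:m-interval-posets} and the preceding discussion, the interval-poset $\tI$ is exactly the interval-poset of the classical Tamari interval $[\DD(B_1), \DD(B_2)]$, so its lower bound Dyck path is $\DD(B_1)$ and its upper bound Dyck path is $\DD(B_2)$. Then $\mcontacts(I) = \mcontacts(B_1)$ and $\mcontactsStep{i,j}(I) = \mcontactsStep{i,j}(B_1)$ by Definition~\ref{def:m-contact-rise-intervals}, while $\contacts(\tI) = \contacts(\DD(B_1))$ and $\contactsStep{k}(\tI) = \contactsStep{k}(\DD(B_1))$ by Definition~\ref{def:contact-rise-intervals}; applying Proposition~\ref{prop:m-ballot-m-dyck} to $B_1$ yields the three contact identities at once. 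Symmetrically, the rises of $I$ are read on $B_2$ and the rises of $\tI$ on $\DD(B_2)$, so the rise identity is Proposition~\ref{prop:m-ballot-m-dyck} applied to $B_2$. None of this requires any new argument beyond matching up the definitions.

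The distance identity is the crux. Here $\distance(I)$ is a maximal chain length computed inside $\Tamnm$ (Definition~\ref{def:m-distance}), whereas $\distance(\tI)$ is a maximal chain length computed inside the full lattice $\Tamntm$ (Definition~\ref{def:distance}), so a priori the ambient poset differs and one must rule out that allowing non-$m$-Dyck intermediate paths changes the answer. My plan is to exploit the fact, recalled in Section~\ref{sec:mtam-def}, that the $m$-Dyck paths form the \emph{upper ideal} of $\Tamntm$ generated by $(1^m0^m)^n$, and that $\DD$ is an order isomorphism from $\Tamnm$ onto this upper ideal. The key observation is that an upper ideal is closed upwards: if $\DD(B_1) = P_1 < P_2 < \dots < P_k = \DD(B_2)$ is any chain of $\Tamntm$, then each $P_i \geq \DD(B_1) \geq (1^m0^m)^n$ forces $P_i$ to be an $m$-Dyck path. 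Hence every chain of $\Tamntm$ between $\DD(B_1)$ and $\DD(B_2)$ lives entirely inside the upper ideal and therefore corresponds, via the order isomorphism $\DD^{-1}$, to a chain of exactly the same length in $\Tamnm$ between $B_1$ and $B_2$; conversely any chain in $\Tamnm$ maps to one in $\Tamntm$. Taking maxima over all such chains gives $\distance(I) = \distance(\tI)$.

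The main obstacle is precisely this last point: one must be certain that restricting to $m$-Dyck paths neither shortens nor lengthens the longest chain. The upper-ideal argument resolves it cleanly, because a chain can never leave the ideal once it starts inside it, so no intermediate non-$m$-Dyck path can occur and no ``shortcut'' through outside elements is possible. As a cross-check one could instead argue through Tamari inversions, combining $\distance(\tI) = |\TInv(\tI)|$ from Proposition~\ref{prop:tamari-inversions} with the analogous count on the $m$-Tamari side, but the upper-ideal route is the most direct and is the one I would write out.
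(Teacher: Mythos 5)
Your proposal is correct and takes essentially the same route as the paper: the rise and contact identities are obtained by applying Proposition~\ref{prop:m-ballot-m-dyck} to the lower bound $B_1$ and upper bound $B_2$, and the distance identity follows from $\Tamnm$ being isomorphic to the upper ideal of $m$-Dyck paths in $\Tamntm$. Your explicit upper-ideal closure argument (a chain between two elements of the ideal can never leave the ideal, so maximal chain lengths agree) is precisely the justification the paper leaves implicit in its one-line remark that ``the distance between two paths in the lattice stays the same.''
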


\begin{proof}
All identities related to rises and contacts are a direct consequence of Proposition~\ref{prop:m-ballot-m-dyck}. Only~\eqref{eq:m-distance} needs to be proved, which is actually also direct: $\Tamnm$ is isomorphic to the ideal of $m$-Dyck path in $\Tamntm$ and so the distance between two paths in the lattice stays the same. 
\end{proof}

\subsection{The expand-contract operation on $m$-Tamari intervals}

\begin{Definition}
\label{def:m-divisible}
We say that an interval-poset $I$ of size $nm$ is 
\begin{itemize}
\item \emph{contact-$m$-divisible} if all values of $\contactsV(I)$ are divisible by $m$;
\item \emph{rise-$m$-divisible} if all values of $\risesV(I)$ are divisible by $m$;
\item \emph{rise-contact-$m$-divisible} if it is both contact-$m$-divisible and rise-$m$-divisible.
\end{itemize}
\end{Definition}

In particular, $m$-interval-posets are rise-$m$-divisible but not necessary contact-$m$-divisible. Besides, we saw that rise-$m$-divisible Dyck paths were exactly $m$-Dyck paths, but the set of rise-$m$-divisible interval-posets is not equal to $m$-interval-posets. Indeed, an interval whose upper bound is an $m$-Dyck path is rise-$m$-divisible but it can have a lower bound which is not an $m$-Dyck path and so it is not an $m$-interval-poset. 

Furthermore, it is quite clear that the set of $m$-interval-posets is not stable through the rise-contact involution $\risecontact$. Indeed, the image of an $m$-interval-poset would be contact-$m$-divisible but not necessary rise-$m$-divisible. In this section, we describe a bijection between the set of $m$-interval-posets and the set of rise-contact-$m$-divisible intervals. This bijection will allow us to define an involution on $m$-interval-posets which proves Theorem~\ref{thm:main-result-general}.

\begin{Definition}
Let $(T, \ell)$ be a grafting tree of size $nm$ and $v_1, \dots, v_{nm}$ be the nodes of $T$ taken in in-order. We say that $(T, \ell)$ is an $m$-grafting-tree if $\ell(v_i) \geq 1$ for all $i$ such that $i \not \equiv 0 \mod m$.
\end{Definition}

\begin{Proposition}
\label{prop:m-graft-m-interval}
An interval-poset $I$ is an $m$-interval-poset if and only if $\graftingTree(I)$ is an $m$-grafting-tree.
\end{Proposition}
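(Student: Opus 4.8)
The plan is to reduce both sides of the claimed equivalence to one and the same statement about the decreasing relations of $I$, namely that $j+1 \trprec_I j$ holds for every index $j$ with $j \not\equiv 0 \pmod m$. The bridge to the grafting tree is Proposition~\ref{prop:grafting-direct}: there $\ell(v_j)$ is exactly the number of decreasing children of the vertex $j$ in $I$. Hence the $m$-grafting-tree condition ``$\ell(v_j) \geq 1$ for all $j \not\equiv 0 \pmod m$'' says precisely that each such $j$ has at least one decreasing child.

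The crux is a small lemma: for $1 \le j < nm$, the vertex $j$ has at least one decreasing child if and only if $j+1 \trprec_I j$. The backward direction is immediate, since $j+1 \trprec_I j$ is a decreasing relation and, there being no integer strictly between $j$ and $j+1$, it is automatically a decreasing-cover relation, so $j+1$ is a decreasing child of $j$. For the forward direction I would invoke the Tamari axiom: if $c$ is a decreasing child of $j$ with $c > j+1$, then applying the axiom (Definition~\ref{def:interval-poset}) to the triple $j < j+1 < c$ together with $c \trprec_I j$ forces $j+1 \trprec_I j$; and if the decreasing child is $c = j+1$ the conclusion is trivial. This lemma is where the real content sits, and I expect it to be the main (though still short) obstacle, the only delicate point being to keep track of the ``cover'' qualifier in the definition of decreasing child.

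With the lemma in hand the rest is bookkeeping. Writing $j = (i-1)m + s$, the statement ``$j+1 \trprec_I j$ for all $j \not\equiv 0 \pmod m$'' is, block by block, the family of relations $(i-1)m+2 \trprec_I (i-1)m+1$, $\dots$, $im \trprec_I im-1$; by transitivity these assemble exactly into the chains
\begin{equation}
im \trprec_I im-1 \trprec_I \dots \trprec_I im-(m-1)
\end{equation}
of Definition~\ref{def:m-interval-posets}, and conversely each such chain yields the consecutive relations $j+1 \trprec_I j$. Thus $I$ is an $m$-interval-poset if and only if $j+1 \trprec_I j$ for all $j \not\equiv 0 \pmod m$, if and only if $\ell(v_j) \geq 1$ for all such $j$ by the lemma and Proposition~\ref{prop:grafting-direct}, i.e.\ if and only if $\graftingTree(I)$ is an $m$-grafting-tree. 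A final remark I would add is that the excluded indices $j \equiv 0 \pmod m$ are exactly those for which no constraint is imposed on either side (in particular $\ell(v_{nm}) = 0$ always, since $nm$ is the maximal vertex), so the two conditions line up index for index.
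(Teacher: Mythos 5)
Your proof is correct and takes essentially the same route as the paper: both rest on Proposition~\ref{prop:grafting-direct} (interpreting $\ell(v_j)$ as the number of decreasing children of $j$) together with the observation that, by the Tamari axiom, vertex $j$ has a decreasing child if and only if $j+1 \trprec_I j$, and these consecutive relations assemble by transitivity into the chains of Definition~\ref{def:m-interval-posets}. Your write-up simply makes explicit the cover-relation check and the block-by-block bookkeeping that the paper leaves implicit.
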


As an example, the top and bottom grafting trees of Figure~\ref{fig:m-big-example} are $m$-grafting trees: you can check that every odd node has a non-zero label. The corresponding $m$-interval-posets are drawn on the same lines. Proposition~\ref{prop:m-graft-m-interval} is direct consequence of 
Proposition~\ref{prop:grafting-direct} and Definition~\ref{def:m-interval-posets}. Indeed, to obtain \eqref{eq:m-condition}, it sufficient to say that every node $i$ of the interval-poset such that  $i \not \equiv 0 \mod m$ has at least one decreasing child $j > i$ such that $j \trprec i$. By definition of an interval-poset, this gives $i + 1 \trprec i$. 

\begin{Proposition}
\label{prop:m-graft-m-binary}
Let $(T,\ell)$ be an $m$-grafting-tree, then $T$ is an $m$-binary-tree.
\end{Proposition}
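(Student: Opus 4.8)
The plan is to avoid re-deriving the recursive shape of $T$ from scratch and instead route through the bijections already set up in this section. Note that the statement concerns only the shape $T$; the labeling $\ell$ serves solely to certify that $(T,\ell)$ meets the $m$-grafting condition. The case $m=1$ is vacuous, since then every binary tree is a $1$-binary tree and the condition $i \not\equiv 0 \bmod m$ is never triggered, so assume $m \geq 2$.

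First I would set $I := \graftingTree^{-1}(T,\ell)$, which is well defined by Proposition~\ref{prop:grafting-tree}. Since $(T,\ell) = \graftingTree(I)$ is an $m$-grafting-tree by hypothesis, Proposition~\ref{prop:m-graft-m-interval} gives that $I$ is an $m$-interval-poset. By Proposition~\ref{prop:grafting-direct}, the underlying tree $T$ is exactly the upper bound binary tree of $I$, that is, $T = \tree(D_2)$ where $D_2$ is the upper bound Dyck path of $I$. It then remains to identify this tree as an $m$-binary tree. By Theorem~\ref{thm:m-interval-posets}, $I$ corresponds to an interval of $\Tam{n}{m}$, which is realized inside $\Tamntm$ as an interval both of whose bounds are $m$-Dyck paths; in particular $D_2$ is an $m$-Dyck path. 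Since the $m$-binary trees are precisely the images of $m$-Dyck paths under the $\tree$ map of Definition~\ref{def:dyck-tree}, we conclude that $T = \tree(D_2)$ is an $m$-binary tree, as desired.

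The delicate point in this indirect route is the last step, namely the passage through the identification of $\Tamnm$ with the upper ideal of $m$-Dyck paths in $\Tamntm$ and the matching of $m$-Dyck paths with $m$-binary trees; both facts are stated earlier in the excerpt and may be invoked, but they are what carries the real structural content. An alternative, fully self-contained argument is a direct induction that matches the grafting decomposition $I = I_L \pleft u \pright{r} I_R$ against the $(m+1)$-ary recursive structure of Definition~\ref{def:m-binary-tree}. The obstacle there is that one grafting step peels off only a single node, the root $v_k$, whereas an $m$-binary tree must be peeled $m$ nodes at a time along the leftmost spine of its right subtree. One would argue that the condition $\ell(v_i) \geq 1$ for $i \not\equiv 0 \bmod m$ forces each such node to carry a decreasing child, hence to sit on that spine, and then check that the left subtree $T_L$ and each hanging subtree $T_{R_j}$ have size divisible by $m$ and are themselves $m$-grafting trees, so that the induction closes. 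Establishing the $m$-divisibility of these subtree sizes and that the spine has length exactly $m$ is the main bookkeeping hurdle, and it is exactly this bookkeeping that the indirect argument above circumvents.
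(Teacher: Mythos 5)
Your proposal is correct and follows essentially the same route as the paper's proof: the paper also argues via Proposition~\ref{prop:m-graft-m-interval} that $(T,\ell)$ corresponds to an $m$-interval-poset $I$, and via Proposition~\ref{prop:grafting-direct} that $T$ equals the upper bound binary tree of $I$, which is an $m$-binary tree. You merely make explicit the steps the paper leaves implicit (the appeal to Theorem~\ref{thm:m-interval-posets} and the identification of $m$-binary trees as $\tree$-images of $m$-Dyck paths), which is a reasonable filling-in rather than a different argument.
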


\begin{proof}
This is immediate by Proposition~\ref{prop:grafting-direct}: $(T,\ell)$ corresponds to an $m$-interval-poset $I$. In particular, the upper bound of $I$ is an $m$-binary tree which is equal to $T$.
\end{proof}

\begin{Proposition}
\label{prop:m-exp-cont}
Let $(T, \ell)$ be an $m$-grafting-tree, and $v_1, \dots, v_{nm}$ its nodes taken in in-order. The \emph{expansion of $(T,\ell)$} is $\expand(T,\ell) = (T', \ell')$ defined by
\begin{itemize}
\item $T' = T$;
\item $\ell'(v_i) = m \ell(v_i)$ if $i \equiv 0 \mod m$, otherwise, $\ell'(v_i) = m (\ell(v_i) -~1)$.
\end{itemize}

Then $\expand$ defines a bijection through their grafting trees between $m$-interval-posets and rise-contact-$m$-divisible interval posets. The reverse operation is called \emph{contraction}, we write $(T,\ell) = \contract(T, \ell')$. Besides, we have
\begin{equation}
\label{eq:m-exp-contacts}
\contacts(T,\ell') = m \contacts(T,\ell). 
\end{equation}
Note that we write $\contacts(T,\ell)$ for $\contacts(\graftingTree^{-1}(T,\ell))$ for short.
\end{Proposition}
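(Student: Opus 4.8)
The plan is to carry out the entire argument on grafting trees, exploiting that $\expand$ leaves the underlying tree $T$ unchanged and only rescales the labels. First I would isolate the structural lemma that does all the work: in an $m$-binary tree, for every node $v_i$ the largest label occurring in its right subtree $T_R(v_i)$, namely $i+\size(T_R(v_i))$, is divisible by $m$. Indeed, following right children from $v_i$ reaches the rightmost node of its subtree, a node with empty right subtree; under the bijection of Definition~\ref{def:dyck-tree} such a node is a peak (an up-step immediately followed by a down-step) of the associated $m$-Dyck path, and since the maximal runs of up-steps of an $m$-Dyck path all have length divisible by $m$, every peak carries an index divisible by $m$. Writing $s_i=\size(T_R(v_i))$, this gives $i+s_i\equiv 0 \pmod m$ for all $i$.

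Next I would set $C_i:=s_i-\labels(T_R(v_i))$, which by the computation in the proof of Proposition~\ref{prop:grafting-tree} is exactly the number of contacts of the interval-poset attached to $T_R(v_i)$, so that the grafting-tree condition at $v_i$ reads $\ell(v_i)\le C_i$. Counting how many indices of $\{i+1,\dots,i+s_i\}$ are divisible by $m$ — using $i+s_i\equiv 0$ — yields $\labels_{\ell'}(T_R(v_i))=m\,\labels_{\ell}(T_R(v_i))-m\,N_i$, where $N_i$ is the number of non-multiples of $m$ in that range, and hence the clean identity $C'_i=mC_i-(i\bmod m)$, with $C'_i$ the analogue of $C_i$ for $\ell'$. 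From here, checking that $(T,\ell')$ is again a grafting tree is a short case split: if $i\equiv 0\pmod m$ then $\ell'(v_i)=m\ell(v_i)\le mC_i=C'_i$; if $i\not\equiv 0\pmod m$ then $\ell'(v_i)=m(\ell(v_i)-1)\le mC_i-m\le C'_i$ since $0<i\bmod m<m$. Running the same identity backwards, together with the divisibility of the $\ell'(v_i)$, shows that $\contract$, defined by $\ell(v_i)=\ell'(v_i)/m$ when $i\equiv 0\pmod m$ and $\ell(v_i)=\ell'(v_i)/m+1$ otherwise, sends a rise-contact-$m$-divisible grafting tree to a valid grafting tree whose labels satisfy $\ell(v_i)\ge 1$ for $i\not\equiv 0$, i.e.\ to an $m$-grafting-tree.

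It then remains to assemble the bijection and read off the statistics. Since $T'=T$ is an $m$-binary tree (Proposition~\ref{prop:m-graft-m-binary}), its upper-bound path is an $m$-Dyck path, so $\risesV$ of the image is $m$-divisible; and by Propositions~\ref{prop:grafting-direct} and~\ref{prop:grafting-tree-contact} each $\contactsStep{i}$ of the image equals $\ell'(v_i)$, which is $m$-divisible by construction, while $\contacts$ of the image equals $\size(T)-\labels_{\ell'}(T)=m\bigl(\size(T)-\labels_{\ell}(T)\bigr)=m\,\contacts(T,\ell)$, giving both the contact-$m$-divisibility and formula~\eqref{eq:m-exp-contacts}. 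The label formulas defining $\expand$ and $\contract$ are visibly mutually inverse, and by Proposition~\ref{prop:m-graft-m-interval} $m$-grafting-trees correspond to $m$-interval-posets, so $\expand$ is the asserted bijection.

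The main obstacle is the mismatch between the global congruence $i\equiv 0\pmod m$ built into the expansion formula and the local, subtree-by-subtree nature of the grafting-tree inequality: a priori the rescaled labels of a subtree are not the expansion of that subtree's own grafting tree, because the relevant residues are shifted by the subtree's position. The peak-residue lemma above is precisely what dissolves this difficulty, collapsing the correction term to the transparent value $-(i\bmod m)$; once that identity is established, all remaining verifications are routine.
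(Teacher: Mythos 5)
Your proposal is correct, and it takes a genuinely different route from the paper's proof. The paper argues by induction on the recursive $(m+1)$-ary decomposition of $m$-binary trees into $T_L, T_{R_1},\dots,T_{R_m}$, running a downward induction over the $m$-roots $v_m,\dots,v_1$ that proves the grafting-tree inequality simultaneously with a shifted contact formula $\contacts(T_k,\ell')=m\contacts(T_k,\ell)-k+1$, and then repeats the whole scheme in reverse for $\contract$. You replace all of this by one global structural lemma — in an $m$-binary tree every node with empty right subtree has in-order index divisible by $m$ (via the peak correspondence with $m$-Dyck paths), hence $i+\size(T_R(v_i))\equiv 0\pmod m$ for every node — after which counting the multiples of $m$ among the labels $i+1,\dots,i+\size(T_R(v_i))$ gives the pointwise identity $C_i'=mC_i-(i\bmod m)$, where $C_i=\size(T_R(v_i))-\labels(T_R(v_i))$ is computed with $\ell$ and $C_i'$ is its analogue for $\ell'$; both directions of the statement then reduce to short case splits, and the global case $\contacts(T,\ell')=m\contacts(T,\ell)$ is the instance $i=0$. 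Your identity is exactly the non-recursive form of the paper's shifted formula (your correction $-(i\bmod m)$ plays the role of its $-k+1$). What your approach buys is locality and transparency: no nested induction, and the apparent clash between the global congruence defining $\expand$ and the subtree-local grafting condition is dissolved in a single lemma. What the paper's approach buys is that its intermediate formula \eqref{eq:m-exp-cont-contacts} is reused verbatim in the proof of Proposition~\ref{prop:m-exp-distance}, so it integrates with the rest of the section. Two points you use implicitly and should state when writing this up: in the contraction direction the peak lemma applies only because rise-$m$-divisibility forces $T$ to be an $m$-binary tree; and the inequality $\ell(v_i)\le C_i$ for $i\not\equiv 0\pmod m$ requires the sharpening $\ell'(v_i)\le C_i'-\bigl(m-(i\bmod m)\bigr)$, which follows from $\ell'(v_i)\equiv 0$ and $C_i'\equiv -(i\bmod m)\pmod m$ — this is the content your phrase about ``the divisibility of the $\ell'(v_i)$'' must carry, and it parallels the integrality step ($0<i/m<1$) in the paper's contraction claim.
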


The intuition behind this operation is first that the relations $im \trprec \dots \trprec im - (m-1)$ are not necessary to recover the $m$-interval-poset (because they are always present) and secondly that the structure of the $m$-binary tree allows to replace each remaining decreasing relations by $m$ decreasing relations. Nevertheless, even if the operation is easy to follow on grafting tree (and the proof mostly straight forward), we would very much like to see a ``better'' description of it directly on Tamari intervals.

\begin{proof}
This proposition contains different results, which we organize as claims and prove separately.
\begin{proofclaim}
\label{claim:expand}
$(T, \ell') = \expand(T, \ell)$ is a grafting tree such that $\contacts(T, \ell') = m\contacts(T, \ell)$. 
\end{proofclaim}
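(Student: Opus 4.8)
The plan is to prove the two assertions of Claim~\ref{claim:expand} separately: first that $(T,\ell')$ satisfies the grafting-tree condition of Definition~\ref{def:grafting-tree}, and then, once this is known, to read off the contact identity directly from~\eqref{eq:grafting-tree-contact}.

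The structural observation that drives everything is that the in-order labelling makes the right subtree of a node occupy a consecutive block of indices: for each node $v_i$, if $s := \size(T_R(v_i))$, then $T_R(v_i)$ is exactly the set of nodes $v_{i+1}, \dots, v_{i+s}$, since the in-order traversal visits the whole right subtree immediately after the root. Hence $\labels'(T_R(v_i))$ is controlled by how many of the indices $i+1, \dots, i+s$ are divisible by $m$. Writing $Z$ for that number, the definition of $\ell'$ gives $\labels'(T_R(v_i)) = m\,\labels(T_R(v_i)) - m(s - Z)$, so that, setting $f := \size(T_R(v_i)) - \labels(T_R(v_i))$ (which satisfies $f \geq \ell(v_i)$ because $(T,\ell)$ is a grafting tree), one obtains $\size(T_R(v_i)) - \labels'(T_R(v_i)) = m f + (s - mZ)$.

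First I would verify the grafting-tree inequality $\ell'(v_i) \leq \size(T_R(v_i)) - \labels'(T_R(v_i))$ by splitting on $i \bmod m$. When $i \equiv 0 \pmod m$ the block $\{i+1,\dots,i+s\}$ begins right after a multiple of $m$, so $Z = \lfloor s/m\rfloor$ and $s - mZ = s \bmod m \geq 0$; combined with $\ell'(v_i) = m\ell(v_i) \leq m f$ this settles the case at once. When $i \not\equiv 0 \pmod m$, writing $i = qm + r$ with $1 \leq r \leq m-1$ gives $Z = \lfloor (r+s)/m\rfloor$ and hence $s - mZ \geq -r \geq -(m-1)$; since now $\ell'(v_i) = m(\ell(v_i)-1)$ and $\ell(v_i) \leq f$, the right-hand side $mf + (s-mZ)$ is at least $m\ell(v_i) - (m-1) > m(\ell(v_i)-1) = \ell'(v_i)$, closing this case. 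Non-negativity of $\ell'$, needed for a valid labelling, is immediate, as the $m$-grafting condition $\ell(v_i) \geq 1$ for $i \not\equiv 0 \pmod m$ keeps $\ell'(v_i) \geq 0$.

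Once $(T,\ell')$ is known to be a grafting tree, the contact identity is a one-line computation from~\eqref{eq:grafting-tree-contact}. Exactly $n(m-1)$ of the $nm$ indices are not divisible by $m$, so summing the definition of $\ell'$ over all nodes yields $\labels'(T) = m\,\labels(T) - m\,n(m-1)$. Therefore $\contacts(T,\ell') = \size(T) - \labels'(T) = nm - m\,\labels(T) + mn(m-1) = m\bigl(nm - \labels(T)\bigr) = m\,\contacts(T,\ell)$, as desired. I expect the only delicate point to be the modular bookkeeping of $s - mZ$ in the two cases; the inequalities themselves are comfortably slack, so no sharp estimate is required.
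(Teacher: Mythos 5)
Your proof is correct, but it takes a genuinely different route from the paper's. The paper proves Claim~\ref{claim:expand} by induction on the recursive $(m+1)$-ary structure of $m$-binary trees: it decomposes $T$ into $T_L, T_{R_1}, \dots, T_{R_m}$, introduces the trees $T_1, \dots, T_m$ hanging at the $m$-roots, and proves the grafting condition \emph{simultaneously} with the refined intermediate identity $\contacts(T_k, \ell') = m\contacts(T_k, \ell) - k + 1$, by downward induction on $k$; the two assertions cannot be separated there because each inequality at $v_k$ needs the contact identity for $T_{k+1}$. You instead verify the grafting condition node by node, exploiting the fact that the in-order labeling makes $T_R(v_i)$ occupy the consecutive index block $\{i+1, \dots, i+\size(T_R(v_i))\}$, so that $\labels(T_R(v_i), \ell')$ is computable from $\labels(T_R(v_i), \ell)$ by counting multiples of $m$ in that block; the resulting bounds $s - mZ \geq 0$ (when $i \equiv 0 \bmod m$) and $s - mZ \geq -(m-1)$ (otherwise) are exactly what is needed, and the slack $m(\ell(v_i)-1) \le m\ell(v_i) - (m-1) - 1$ closes the second case. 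Once $(T,\ell')$ is known to be a grafting tree, your one-line global summation of $\ell'$ recovers $\contacts(T,\ell') = m\contacts(T,\ell)$ from \eqref{eq:grafting-tree-contact}, cleanly decoupling the two halves of the claim. What your argument buys is brevity and economy of hypotheses: you never invoke Proposition~\ref{prop:m-graft-m-binary} or the $m$-binary-tree decomposition at all, only the $m$-grafting condition $\ell(v_i) \ge 1$ for $i \not\equiv 0 \bmod m$. What the paper's heavier induction buys is reusability: the decomposition of Figure~\ref{fig:m-exp-cont} and the intermediate identity \eqref{eq:m-exp-cont-contacts} are cited again verbatim in the proof of Claim~3 (the $\contract$ direction) and in Proposition~\ref{prop:m-exp-distance} for the distance statistic, so if one adopted your proof of Claim~\ref{claim:expand}, those later arguments would need to be reworked (or your block-counting technique extended to them) rather than simply quoting the inductive bookkeeping.
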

These two properties are intrinsically linked, we will prove both at the same time by induction on the recursive structure of $m$-binary-trees. Let $(T, \ell)$ be an $m$-grafting tree. By Proposition~\ref{prop:m-graft-m-binary}, $T$ is an $m$-binary tree. If $T$ is empty, then there is nothing to prove. Let us suppose that $T$ is non-empty: it can be decomposed into $m+1$ subtrees $T_L, T_{R_1}, \dots, T_{R_m}$ which are all $m$-grafting trees. By induction, we suppose that they satisfy the claim.

Let us first focus on the case where $T_L$ is the empty tree. Then $v_1$ (the first node in in-order) is the root, and moreover, the $m$-roots are $v_1, \dots v_m$. We call $T_1, T_2, \dots, T_m$ the subtrees of $T$ whose roots are respectively $v_1, \dots, v_m$ (in particular, $T_1 = T$). See Figure~\ref{fig:m-exp-cont} for an illustration.

\begin{figure}[ht]
\input{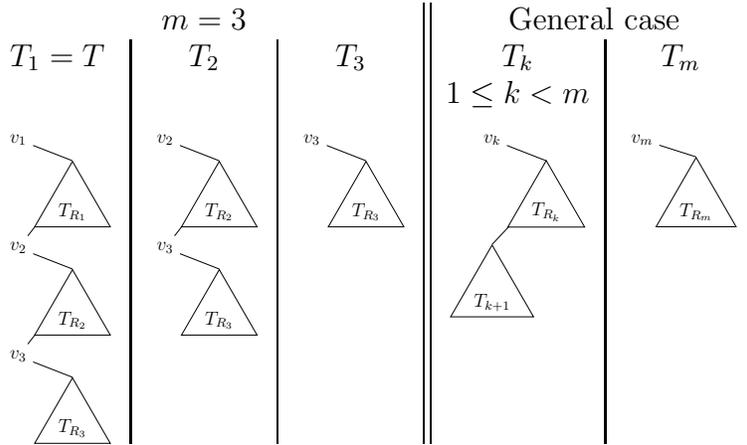}
\caption{Illustration of $T_1, \dots, T_m$}
\label{fig:m-exp-cont}
\end{figure}

In particular, for $ 1 \leq k < m$, the tree $T_k$ follows a structure that depends on $T_{R_k}$ and $T_{k+1}$ as shown in Figure~\ref{fig:m-exp-cont} and $T_m$ depends only on $T_{R_m}$. Note that $T_2, \dots T_k$ are grafting trees but they are not $m$-grafting trees whereas $T_{R_1}, \dots, T_{R_m}$ are. Following Definition~\ref{def:grafting-tree}, the structure gives us
\begin{align}
\label{eq:m-exp-cont-tvk}
\ell(v_k) &\leq \size(T_{R_k}) + \size(T_{k+1}) - \labels(T_{R_k}, \ell) - \labels(T_{k+1}, \ell) \\
\nonumber
 &= \contacts(T_{R_k}, \ell) + \contacts(T_{k+1},\ell)
\end{align}
for $1 \leq k < m$ and
\begin{equation}
\label{eq:m-exp-cont-tvm}
\ell(v_m) \leq \contacts(T_{R_m}, \ell).
\end{equation}

Also, for $1 \leq k < m$, we have $\ell'(v_k) = m(\ell(v_k) -1) \geq 0$ (indeed remember that $\ell(v_k) \geq 1$ because $(T,\ell)$ is an $m$-grafting-tree) and $\ell'(v_m) = m \ell(v_m) \geq 0$. To prove that $(T,\ell')$ is a grafting tree, we need to show
\begin{align}
\label{eq:m-exp-cont-graft}
 \ell'(v_k) &\leq \contacts(T_{R_k}, \ell') + \contacts(T_{k+1},\ell'); \\
\label{eq:m-exp-cont-graftm}
 \ell'(v_m) &\leq \contacts(T_{R_m}, \ell').
\end{align}
We simultaneously prove 
\begin{equation}
\label{eq:m-exp-cont-contacts}
\contacts(T_k, \ell') = m \contacts(T_k, \ell) - k + 1.
\end{equation}
The case $k=1$ in \eqref{eq:m-exp-cont-contacts} finishes to prove the claim.

We start with $k=m$ and then do an induction on $k$ decreasing down to 1. By hypothesis, we know that $(T_{R_m}, \ell)$ satisfies the claim. In particular $(T_{R_m}, \ell')$ is a grafting tree and $\contacts(T_{R_m}, \ell') = m \contacts(T_{R_m}, \ell)$. By definition, we have $\ell'(v_m) = m \ell(v_m)$ and so \eqref{eq:m-exp-cont-tvm} implies \eqref{eq:m-exp-cont-graftm}. Besides
\begin{align}
\contacts(T_m, \ell) &= \size(T_m) - \labels(T_m, \ell) \\
\nonumber
&= 1 + \size(T_{R_m}) - \ell(v_m) - \labels(T_{R_m}, \ell) \\
\nonumber
&= 1 - \ell(v_m) + \contacts(T_{R_m}, \ell), \\
\label{eq:m-exp-cont-contacts-m}
m \contacts(T_m, \ell) - m + 1 &= m -  m\ell(v_m) + m \contacts(T_{R_m}, \ell) - m + 1 \\
\nonumber
&= 1 - \ell'(v_m) + \contacts(T_{R_m}, \ell') \\
\nonumber
&= \contacts(T_m, \ell'),
\end{align}
\emph{i.e.}, case $k=m$ of \eqref{eq:m-exp-cont-contacts}.

Now, we choose $1 \leq i <m$ and assume \eqref{eq:m-exp-cont-graft} and \eqref{eq:m-exp-cont-contacts} to be true for $k > i$. We have $\ell'(v_i) = m \left( \ell(v_i) - 1 \right)$, so \eqref{eq:m-exp-cont-tvk} gives us
\begin{align}
\ell'(v_i) &\leq m \contacts(T_{R_i}, \ell) + m \contacts(T_{i+1}, \ell) - m \\
\nonumber
&= \contacts(T_{R_i}, \ell') + \contacts(T_{i+1}, \ell') + i - m
\end{align}
using \eqref{eq:m-exp-cont-contacts} with $k = i+1$. As $i < m$, this proves \eqref{eq:m-exp-cont-graft} for $k=i$. Now, the structure of $T_i$ gives us
\begin{equation}
\contacts(T_i, \ell) = \contacts(T_{R_i}, \ell) + \contacts(T_{i+1}, \ell) + 1 - \ell(v_i);
\end{equation}
\begin{align}
\label{eq:m-exp-cont-contactsi}
\contacts(T_i, \ell') &= \contacts(T_{R_i}, \ell') + \contacts(T_{i+1}, \ell') + 1 - \ell'(v_i) \\
\nonumber
&= m \contacts(T_{R_i}, \ell) + m \contacts(T_{i+1}, \ell) - (i+1) + 1 + 1 - m (\ell(v_i) - 1) \\
\nonumber
&= m \contacts(T_i, \ell) - i + 1.
\end{align}

The case where $T_L$ is not the empty tree is left to consider but actually follows directly. The claim is true on $T_L$ by induction as its size is strictly smaller than $T$. Let $\tilde{T}$ be the tree $T$ where you remove the left subtree $T_L$. Then $\tilde{T}$ is still an $m$-grafting tree and the above proof applies. The expansion on $T$ consists of applying the expansion independently on $T_L$ and $\tilde{T}$ and we get $\contacts(T, \ell') = \contacts(T_L, \ell') + \contacts(\tilde{T}, \ell') = m \contacts(T, \ell)$.

\begin{proofclaim}
$(T, \ell') = \expand(T)$ is rise-contact-$m$-divisible.
\end{proofclaim}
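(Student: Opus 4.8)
The plan is to verify the two divisibility conditions separately, relying almost entirely on the facts that $\expand$ fixes the underlying tree $T$ and turns every label into a multiple of $m$.

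First I would establish contact-$m$-divisibility. By Proposition~\ref{prop:grafting-tree-contact} together with~\eqref{eq:grafting-tree-contact}, the contact vector of a grafting tree of size $nm$ is
$$\contactsV(T,\ell') = \left(\contacts(T,\ell'),\, \ell'(v_1), \dots, \ell'(v_{nm-1})\right).$$
The leading entry equals $m\contacts(T,\ell)$ by~\eqref{eq:m-exp-contacts}, just established in Claim~\ref{claim:expand}, hence is divisible by $m$. Every remaining entry is one of the labels $\ell'(v_i)$, and the definition of $\expand$ gives $\ell'(v_i) = m\,\ell(v_i)$ when $i \equiv 0 \bmod m$ and $\ell'(v_i) = m(\ell(v_i)-1)$ otherwise; in both cases $\ell'(v_i)$ is a multiple of $m$. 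Thus all entries of $\contactsV(T,\ell')$ are divisible by $m$, which is exactly contact-$m$-divisibility.

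For rise-$m$-divisibility I would use that $\expand$ leaves the tree unchanged, $T' = T$. By Proposition~\ref{prop:grafting-direct} the tree $T$ is the upper bound binary tree of the corresponding interval, and the rise vector of an interval is a statistic of its upper bound alone; therefore modifying only the labels cannot change it, and $\risesV(T,\ell') = \risesV(T,\ell)$. Now $(T,\ell)$ is an $m$-grafting-tree, so by Proposition~\ref{prop:m-graft-m-interval} it is the grafting tree of an $m$-interval-poset, which we already know to be rise-$m$-divisible. Hence $\risesV(T,\ell')$ is divisible by $m$ as well, and combining the two parts shows that $(T,\ell')$ is rise-contact-$m$-divisible.

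I do not expect a serious obstacle here: once Claim~\ref{claim:expand} is in hand, the contact statement is immediate from the explicit form of $\ell'$, and the rise statement follows purely from the invariance of the upper bound tree under $\expand$. The only point worth stating carefully is that the rise vector genuinely depends on the shape $T$ and not on the labeling, so that an operation touching only $\ell$ preserves it; this is precisely what Proposition~\ref{prop:grafting-direct} provides.
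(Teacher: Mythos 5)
Your proof is correct and follows essentially the same route as the paper: the contact half (leading entry equal to $m\contacts(T,\ell)$ by Claim~\ref{claim:expand}, all remaining entries of $\contactsV$ being the labels $\ell'(v_i)$, which are multiples of $m$ by construction) is exactly the paper's argument. For the rise half the paper instead observes that $T$ is an $m$-binary tree and hence corresponds to an $m$-Dyck path, which is rise-$m$-divisible; your formulation — the rise vector depends only on the unchanged upper bound tree $T$, and the original object is rise-$m$-divisible because it is an $m$-interval-poset — is the same fact read in the other direction, so there is no substantive difference.
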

$T$ is still an $m$-binary tree, which by Proposition~\ref{prop:grafting-direct}, means that the upper bound of $\graftingTree^{-1}(T, \ell')$ is an $m$-binary tree: it corresponds to an $m$-Dyck path and is then $m$-rise-divisible. We have just proved that $\contacts(T, \ell') = m \contacts(T, \ell)$ is a multiple of $m$. By Proposition~\ref{prop:grafting-contact} the rest of the contact vector is given by reading the labels on $T$: by definition of $\ell'$, all labels are multiples of $m$.

\begin{proofclaim}
Let $(T, \ell')$ be a rise-contact-$m$-divisible grafting tree, then $(T, \ell) = \contract(T, \ell')$ is an $m$-grafting tree.
\end{proofclaim}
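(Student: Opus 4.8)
The plan is to run the induction of Claim~\ref{claim:expand} in reverse, the new ingredient being an integrality argument supplied by the $m$-divisibility. First I would record two preliminaries. Since $(T,\ell')$ is rise-$m$-divisible, its upper bound is an $m$-Dyck path, so by Proposition~\ref{prop:grafting-direct} the shape $T$ is an $m$-binary tree and admits the $(m+1)$-ary decomposition into $T_L, T_{R_1}, \dots, T_{R_m}$. Since $(T,\ell')$ is contact-$m$-divisible and the labels of a grafting tree are exactly the entries of $\contactsV^*$ by Proposition~\ref{prop:grafting-tree-contact}, every $\ell'(v_i)$ is a multiple of $m$; hence $\contract$ is well defined and produces non-negative integer labels $\ell$. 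For $i \not\equiv 0 \pmod m$ we get $\ell(v_i) = \ell'(v_i)/m + 1 \geq 1$, which is precisely the extra inequality in the definition of an $m$-grafting-tree, so that condition is immediate.

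It then remains to check that $(T,\ell)$ satisfies the grafting-tree labeling condition of Definition~\ref{def:grafting-tree}, and I would do this by induction on the $(m+1)$-ary structure, mirroring Claim~\ref{claim:expand}. As there, I first treat the case $T_L = \emptyset$ and let $T_1 = T, T_2, \dots, T_m$ be the subtrees rooted at the $m$-roots $v_1, \dots, v_m$ (Figure~\ref{fig:m-exp-cont}). Each restriction of $\ell'$ to $T_{R_j}$ is again a rise-contact-$m$-divisible grafting tree: $T_{R_j}$ is $m$-binary (hence rise-$m$-divisible), its labels are multiples of $m$, and by \eqref{eq:grafting-tree-contact} its initial contact $\size(T_{R_j}) - \labels(T_{R_j},\ell')$ is a multiple of $m$ as well. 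By the induction hypothesis these restrictions contract to $m$-grafting-trees, so all grafting conditions at the internal nodes of the $T_{R_j}$ (and of $T_L$) already hold. The only conditions left are those at the $m$-roots $v_1, \dots, v_m$.

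For these I would re-establish the contact identity $\contacts(T_k, \ell') = m\,\contacts(T_k, \ell) - k + 1$ for $1 \leq k \leq m$ by the same descending induction on $k$ as in Claim~\ref{claim:expand} (this only uses that $\contract$ inverts the label transformation together with \eqref{eq:grafting-tree-contact}). For $k = m$, where $v_m$ has index $\equiv 0 \pmod m$, the condition $\ell'(v_m) \leq \contacts(T_{R_m}, \ell')$ reads $m\,\ell(v_m) \leq m\,\contacts(T_{R_m}, \ell)$ and divides down to the required inequality directly. The delicate case, and the main obstacle, is $1 \leq k < m$: here the grafting condition for $\ell'$, namely $\ell'(v_k) \leq \contacts(T_{R_k},\ell') + \contacts(T_{k+1},\ell')$, becomes after substituting $\ell'(v_k) = m(\ell(v_k)-1)$ and the contact identities
\[
m\,\ell(v_k) \leq m\bigl(\contacts(T_{R_k},\ell) + \contacts(T_{k+1},\ell)\bigr) + (m-k).
\]
This is a priori strictly weaker than the grafting condition $\ell(v_k) \leq \contacts(T_{R_k},\ell) + \contacts(T_{k+1},\ell)$ that I want. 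The key observation is that $0 < m-k < m$ while $\ell(v_k)$, $\contacts(T_{R_k},\ell)$ and $\contacts(T_{k+1},\ell)$ are all integers, so dividing by $m$ gives $\ell(v_k) \leq \contacts(T_{R_k},\ell) + \contacts(T_{k+1},\ell) + 1 - \tfrac{k}{m}$ with $1-\tfrac{k}{m}\in(0,1)$, which forces the wanted inequality. It is exactly the $m$-divisibility hypothesis, through this integrality argument, that makes the contraction land in the $m$-grafting-trees.

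Finally the case $T_L \neq \emptyset$ reduces to the previous one as in the last paragraph of Claim~\ref{claim:expand}: contraction acts independently on $T_L$ and on the tree $\tilde T$ obtained from $T$ by deleting $T_L$, both of strictly smaller size, so the induction applies. To close the loop I would observe that $\expand$ and $\contract$ are mutually inverse label maps on a fixed tree shape; combined with Claim~\ref{claim:expand} and the second claim, this shows $\expand$ is a bijection between $m$-grafting-trees and rise-contact-$m$-divisible grafting trees, which through $\graftingTree$ is the asserted bijection between $m$-interval-posets and rise-contact-$m$-divisible interval-posets.
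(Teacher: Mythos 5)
Your proof is correct and takes essentially the same route as the paper's: the same reduction to the case $T_L = \emptyset$ via the $(m+1)$-ary decomposition of Figure~\ref{fig:m-exp-cont}, the same descending induction on $k$ re-establishing the identity $\contacts(T_k,\ell') = m\contacts(T_k,\ell) - k + 1$, and the same integrality argument (the fractional term $1 - \tfrac{k}{m} \in (0,1)$ forces the grafting inequality at the $m$-roots). The extra details you supply --- that contact-$m$-divisibility makes every label a multiple of $m$ so $\contract$ is well defined, and that $\ell(v_i) \geq 1$ holds by construction --- are points the paper leaves implicit but they do not change the argument.
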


We define $(T, \ell) = \contract(T, \ell')$ to make it the inverse of the $\expand$ operation:
\begin{align}
\ell(v_i) &= \frac{\ell'(v_i)}{m} &\text{if } i \equiv 0 \mod m \\
\ell(v_{i}) &= \frac{\ell'(v_i)}{m} + 1 &\text{otherwise}.
\end{align}

As earlier, we simultaneously prove that $(T, \ell)$ is an $m$-grafting tree and that $\contacts(T,\ell) = \frac{\contacts(T,\ell')}{m}$. Our proof follows the exact same scheme as for Claim~\ref{claim:expand}. First note that the fact that $(T,\ell')$ is rise-$m$-divisible implies that $T$ is an $m$-binary tree: indeed, it corresponds to a certain Dyck path which is rise-$m$-divisible. When $T$ is not empty, we can recursively decompose it into $T_L$, $T_{R_1}, \dots, T_{R_m}$. As earlier, the only case to consider is actually when $T_L$ is empty. We use the decomposition of $T$ depicted in Figure~\ref{fig:m-exp-cont} and prove \eqref{eq:m-exp-cont-contacts} and~\eqref{eq:m-exp-cont-tvk} by induction on $k$ decreasing from $m$ to $1$. The case where $k = m$ is straightforward: we have that \eqref{eq:m-exp-cont-graftm} implies \eqref{eq:m-exp-cont-tvm} and \eqref{eq:m-exp-cont-contacts-m} is still true. Now, we choose $1 \leq i <m$ and assume \eqref{eq:m-exp-cont-tvk} and \eqref{eq:m-exp-cont-contacts} to be true for $k > i$. Using \eqref{eq:m-exp-cont-graft}, we get
\begin{align}
m ( \ell(v_i) - 1) &\leq \contacts(T_{R_i}, \ell') + \contacts(T_{i+1}, \ell') \\
\nonumber 
&= m \contacts(T_{R_i}, \ell) + m \contacts(T_{i+1}, \ell) -(i+1) +1 \\
\nonumber
\ell(v_i) &\leq \contacts(T_{R_i}, \ell) + \contacts(T_{i+1}, \ell) - \frac{i}{m} + 1 .
\end{align}
We have $0 < \frac{i}{m} < 1$ and because $\ell(v_i)$ is an integer then \eqref{eq:m-exp-cont-tvk} is true. Besides, by definition of $\ell$, $\ell(v_i) \geq 1$, which satisfies the $m$-grafting tree condition. The rest of the induction goes smoothly because \eqref{eq:m-exp-cont-contactsi} is still valid.
\end{proof}

The $\expand$ and $\contract$ operations are the final crucial steps that allow us to define the $m$-contact-rise involution and prove Theorem~\ref{thm:main-result-general}. Before that, we need a last property to understand how the distance statistic behaves through the transformation.

\begin{Proposition}
\label{prop:m-exp-distance}
Let $(T, \ell)$ be an $m$-grafting tree of size $mn$, and $(T,\ell') = \expand(T,\ell)$, then 
\begin{equation}
\distance(T,\ell') = m \distance(T, \ell) + \frac{n m (m-1)}{2}
\end{equation}
\end{Proposition}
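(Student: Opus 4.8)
The plan is to compute both distances directly through the summation formula of Proposition~\ref{prop:grafting-tree-distance}, since the underlying binary tree $T$ is unchanged by $\expand$ and only the labels move. Write $s_i = \size(T_R(v_i))$, let $L_i = \labels(T_R(v_i))$ be computed with $\ell$, and $L_i'$ the same quantity computed with $\ell'$. Proposition~\ref{prop:grafting-tree-distance} gives $\distance(T,\ell) = \sum_i d_i$ with $d_i = s_i - L_i - \ell(v_i)$, and likewise $\distance(T,\ell') = \sum_i d_i'$ with $d_i' = s_i - L_i' - \ell'(v_i)$. Setting $\epsilon_i = 1$ when $i \not\equiv 0 \bmod m$ and $\epsilon_i = 0$ otherwise, the definition of $\expand$ reads $\ell'(v_i) = m\ell(v_i) - m\epsilon_i$, and hence $L_i' = mL_i - mN_i$ with $N_i := \sum_{v_j \in T_R(v_i)}\epsilon_j$. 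First I would substitute these into $d_i'$ to obtain the clean local identity $d_i' - m d_i = -(m-1)s_i + mN_i + m\epsilon_i$.

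Next I would sum this identity over all $mn$ nodes, the key point being that $\sum_i s_i$ and $\sum_i N_i$ become sums over \emph{pairs}. Exchanging the order of summation, $\sum_i s_i = \sum_j A_j$ and $\sum_i N_i = \sum_{j:\epsilon_j=1} A_j$, where $A_j := \#\{i : v_j \in T_R(v_i)\}$ is the number of nodes having $v_j$ in their right subtree; by the definition of the final forest this $A_j$ is exactly the number of strict decreasing ancestors of $j$ in $I = \graftingTree^{-1}(T,\ell)$. Together with $\sum_i \epsilon_i = n(m-1)$, the summed identity becomes $\distance(T,\ell') - m\distance(T,\ell) = \bigl(\sum_{j:\epsilon_j=1}A_j\bigr) - (m-1)\bigl(\sum_{j:\epsilon_j=0}A_j\bigr) + mn(m-1)$.

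The crux, and the one place where the $m$-structure is genuinely used, is evaluating these two ancestor sums. Here I would invoke Proposition~\ref{prop:m-graft-m-interval}: $I$ is an $m$-interval-poset, so its final forest satisfies the $m$-condition $im \trprec im-1 \trprec \cdots \trprec im-(m-1)$ for every block $1\le i\le n$. This forces a chain inside each block, so that, writing $B_i$ for the number of decreasing ancestors of the block-top $im-(m-1)$, one gets $A_{im-r} = (m-1-r) + B_i$ for $0\le r\le m-1$. The block-top counts $B_i$ then appear with opposite signs and cancel: a short computation gives $\sum_{j:\epsilon_j=1}A_j = n\tfrac{(m-1)(m-2)}{2} + (m-1)\sum_i B_i$ and $\sum_{j:\epsilon_j=0}A_j = n(m-1) + \sum_i B_i$, whence $\sum_{j:\epsilon_j=1}A_j - (m-1)\sum_{j:\epsilon_j=0}A_j = -\tfrac{nm(m-1)}{2}$. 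Substituting back yields $\distance(T,\ell') - m\distance(T,\ell) = -\tfrac{nm(m-1)}{2} + mn(m-1) = \tfrac{nm(m-1)}{2}$, which is the claim.

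I expect the main obstacle to be making the ancestor bookkeeping of the previous paragraph fully rigorous, in particular justifying that every proper decreasing ancestor of $im-r$ beyond the $m-1-r$ elements inside its own block is precisely an ancestor of the block-top, so that the chain decomposition $A_{im-r} = (m-1-r)+B_i$ holds with no hidden extra relations. Should the exchange-of-summation bookkeeping become awkward, an alternative is a recursive proof following the $(m+1)$-ary decomposition of $m$-binary trees exactly as in the proof of Proposition~\ref{prop:m-exp-cont}, combining Proposition~\ref{prop:grafting-distance} with~\eqref{eq:m-exp-contacts}; but the direct computation above seems shorter.
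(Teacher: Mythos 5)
Your global strategy (a local identity for $d_i' - m d_i$ followed by an exchange of summation) is sound, and all of your arithmetic checks out, but the justification of the step you yourself flag as the crux contains a genuine error: $A_j$ is \emph{not} the number of decreasing ancestors of $j$ in $I = \graftingTree^{-1}(T,\ell)$. By Proposition~\ref{prop:grafting-direct}, the binary tree $T$ of the grafting tree is the \emph{upper-bound} tree of $I$, while the final forest of $I$ (its decreasing relations) is that of the \emph{lower} bound. The relation ``$v_j$ lies in the right subtree of $v_i$ in $T$'' encodes $\dec(T)$, the final forest of the singleton interval $[T,T]$, which differs from the final forest of $I$ whenever the interval is not reduced to one element. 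Concretely, for the interval-poset of Figure~\ref{fig:grafting-tree}, the numbers of decreasing ancestors in $I$ are $(0,0,1,0,1,1,0,1)$, summing to $4$ (the number of decreasing relations of $I$), whereas $(A_1,\dots,A_8)=(0,0,1,0,1,2,2,3)$, summing to $9 = \sum_i \size(T_R(v_i))$ as your exchange of summation requires. Consequently Proposition~\ref{prop:m-graft-m-interval} and the $m$-condition \eqref{eq:m-condition}, which concern the decreasing relations of $I$, cannot justify the block formula for the $A_j$: as written, your block formula and your summation identity are statements about two different quantities.

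The gap is repairable without touching your computation. What you need is the block formula for the tree-theoretic $A_j$, and it holds because $T$ is an $m$-binary tree (Proposition~\ref{prop:m-graft-m-binary}); equivalently, the upper bound of an $m$-interval is itself an $m$-Dyck path, so the singleton interval $[T,T]$ satisfies \eqref{eq:m-condition}, which says precisely that $v_{im}$ lies in the right subtree of $v_{im-1}$, \dots, $v_{im-(m-2)}$ lies in the right subtree of $v_{im-(m-1)}$ \emph{inside $T$ itself}. From such a chain the block formula follows with no hidden extra ancestors: $v_j$ is the in-order successor of $v_{j-1}$, so if $v_j$ lies in the right subtree of $v_{j-1}$ it must be the leftmost node of that subtree; every node strictly between them on the tree path therefore has $v_j$ in its \emph{left} subtree, and every node above $v_{j-1}$ sees $v_j$ and $v_{j-1}$ on the same side, whence $A_j = A_{j-1}+1$ and, iterating, $A_{im-r} = (m-1-r)+A_{im-(m-1)}$. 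With this substitution, the cancellation of the $B_i$, the count $\sum_i \epsilon_i = n(m-1)$, and the final arithmetic are all correct, and you obtain a direct, non-recursive proof. This is genuinely different from the paper's argument, which proceeds by induction on the $(m+1)$-ary decomposition of $m$-binary trees and proves the finer local identity $d_{im-j}(T,\ell') = m\, d_{im-j}(T,\ell) + j$ (using the contact bookkeeping of Proposition~\ref{prop:m-exp-cont}); that inductive route is exactly the fallback you sketch in your last paragraph.
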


\begin{proof}
For each vertex $v_i$ of $T$, let $d_i(T,\ell) = \size(T_R(v_i)) - \labels(T_R(v_i), \ell) - \ell(v_i)$ where $T_R(v_i)$ is the right subtree of the vertex $v_i$ in $T$ and remember that $d(T,\ell) = \sum_{i=1}^{nm} d_i$ by Proposition~\ref{prop:grafting-tree-distance}. We claim that
\begin{equation}
d_{im - j}(T, \ell') = m d_{im - j}(T, \ell) + j  
\end{equation}
for $1 \leq i \leq n$ and $0 \leq j < m$, which gives the result by summation. We prove our claim by induction on $n$. Let us suppose that $T$ is not empty and decomposes into $T_L, T_{R_1}, \dots, T_{R_m}$. The result is true by induction on the subtrees: indeed the index of a given vertex (in in-order) in $T$ and in its corresponding subtree is the same modulo $m$. It remains to prove the property for the $m$-roots of $T$, which are given by $\lbrace v_{im - j} ; 0 \leq j < m \rbrace$ for some $1 \leq i \leq n$. We use the decomposition of Figure~\ref{fig:m-exp-cont}. Remember that $T_{R_m}$ is an $m$-grafting tree and we have by Proposition~\ref{prop:m-exp-cont} that $\contacts(T_{R_m},\ell') = m\contacts(T_{R_m},\ell)$. We get
\begin{align}
d_{im}(T, \ell') &= \size(T_{R_m}) - \labels(T_{R_m}, \ell') - \ell'(v_{im}) \\
\nonumber
&= \contacts(T_{R_m},\ell') - \ell'(v_{im}) \\
\nonumber
&= m \contacts(T_{R_m},\ell) - m \ell(v_{im}) \\
\nonumber
&= m d_{im}(T,\ell).
\end{align}

Now, remember that by the decomposition of Figure~\ref{fig:m-exp-cont}, $T_R(v_{im-j})$ is made of $T_{R_{m-j}}$ (which is an $m$-grafting tree) with $T_{m-j+1}$ grafted on its left most branch, using that and \eqref{eq:m-exp-cont-contacts}, we get
\begin{align}
d_{im-j}(T,\ell') &= \size(T_R(v_{im-j})) - \labels(T_R(v_{im-j}),\ell') - \ell'(v_{im-j}) \\
\nonumber
&= \size(T_{R_{m-j}}) + \size(T_{m-j+1}) - \labels(T_{R_{m-j}}, \ell') - \labels(T_{m-j+1},\ell') - \ell'(v_{im-j}) \\
\nonumber
&= \contacts(T_{R_{m-j}}, \ell') + \contacts(T_{m-j+1},\ell') -  \ell'(v_{im-j}) \\
\nonumber
&= m \contacts(T_{R_{m-j}}, \ell) + m \contacts(T_{m-j+1},\ell) -(m-j+1) + 1 - m(\ell(v_{im-j}) -1) \\
\nonumber
&= d_{im-j}(T,\ell) +j.
\end{align}
\end{proof}

\begin{Theorem}[The $m$-rise-contact involution]
\label{thm:m-rise-contact-involution}
Let $\mrisecontact$ be the \emph{$m$-rise-contact involution} defined on $m$-interval-posets by
\begin{align}
\mrisecontact = \contract \circ \risecontact \circ \expand
\end{align}
Then $\mrisecontact$ is an involution on intervals of $\Tamnm$, such that for an interval~$I$ and a commutative alphabet $X$,
\begin{align}
\label{eq:m-rise-contact-rises}
\mrises(I) &= \mcontacts(\mrisecontact(I)); \\
\label{eq:m-rise-contact-partitions}
\mrisesP(I,X) &= \mcontactsP(\mrisecontact(I),X); \\
\label{eq:m-rise-contact-distance}
\distance(I) &= \distance(\mrisecontact(I)).
\end{align}
\end{Theorem}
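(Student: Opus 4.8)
The plan is to conduct the whole argument on grafting trees, leaning on three facts already in hand: $\expand$ and $\contract$ are mutually inverse bijections between $m$-interval-posets and rise-contact-$m$-divisible interval-posets (Proposition~\ref{prop:m-exp-cont}); $\risecontact$ is an involution that permutes $\risesV$ into $\contactsV$ and preserves $\distance$ (Theorem~\ref{thm:rise-contact-statistics}); and the $m$-statistics of an interval of $\Tamnm$ are read off the ordinary statistics of its size-$nm$ interval-poset $\tI$ through Proposition~\ref{prop:m-statistics-interval-posets}. Throughout I freely identify an interval-poset with its grafting tree via $\graftingTree$, as the paper already does.

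The key preliminary step, which I expect to be the main obstacle, is to check that $\mrisecontact$ is well defined, i.e.\ that $\risecontact$ maps rise-contact-$m$-divisible interval-posets to rise-contact-$m$-divisible interval-posets. Let $J$ be rise-contact-$m$-divisible and $K = \risecontact(J)$. By Theorem~\ref{thm:rise-contact-statistics}, $\risesP(J,X) = \contactsP(K,X)$, so $\contactsV(K)$ is a permutation of $\risesV(J)$; since every entry of $\risesV(J)$ is a multiple of $m$, so is every entry of $\contactsV(K)$, hence $K$ is contact-$m$-divisible. Because $\risecontact$ is an involution we have $J = \risecontact(K)$, and applying the same identity to $K$ shows that $\risesV(K)$ is a permutation of $\contactsV(J)$, whose entries are multiples of $m$; thus $K$ is rise-$m$-divisible as well. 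Therefore $\contract$ may legitimately be applied to $\risecontact(\expand(I))$, and $\mrisecontact$ sends $m$-interval-posets to $m$-interval-posets, equivalently intervals of $\Tamnm$ to intervals of $\Tamnm$. The involution property then follows formally: writing $\mrisecontact\circ\mrisecontact = \contract\circ\risecontact\circ(\expand\circ\contract)\circ\risecontact\circ\expand$, the middle factor $\expand\circ\contract$ is applied to $\risecontact(\expand(I))$, which we just showed is rise-contact-$m$-divisible, so it acts as the identity; what remains is $\contract\circ\risecontact\circ\risecontact\circ\expand = \contract\circ\expand = \mathrm{id}$.

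For the distance identity \eqref{eq:m-rise-contact-distance}, I would chain Proposition~\ref{prop:m-exp-distance} (which adds the same constant $nm(m-1)/2$ when passing from $\tI$ to $\expand(\tI)$ and again when passing from the contracted output back to its expansion), the invariance $\distance(\risecontact(\cdot)) = \distance(\cdot)$ from Theorem~\ref{thm:rise-contact-statistics}, and the identity \eqref{eq:m-distance} relating $\distance(I)$ to $\distance(\tI)$. The two copies of the constant cancel, leaving $\distance(I) = \distance(\mrisecontact(I))$.

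Finally, for \eqref{eq:m-rise-contact-rises} and \eqref{eq:m-rise-contact-partitions}, the decisive observation is that $\expand$ leaves the underlying tree $T$ of the grafting tree unchanged; by Proposition~\ref{prop:grafting-direct} $T$ is the upper bound binary tree, so $\expand$ preserves the upper bound and hence the entire rise vector: $\risesV(\expand(\tI)) = \risesV(\tI)$, which by Proposition~\ref{prop:m-statistics-interval-posets} equals $m\,\mrisesV(I)$ entrywise. Passing through $\risecontact$ turns this into a permutation of $\contactsV(\risecontact(\expand(\tI)))$, and $\contract$ divides every contact label by $m$ after absorbing the forced $+1$ on the indices not divisible by $m$; matching indices via Propositions~\ref{prop:grafting-tree-contact} and~\ref{prop:m-statistics-interval-posets} shows that $\mcontactsV(\mrisecontact(I))$ equals $\frac1m\contactsV(\risecontact(\expand(\tI)))$, which is a permutation of $\frac1m\risesV(\tI) = \mrisesV(I)$. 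Reading off the initial entry gives \eqref{eq:m-rise-contact-rises}, and reading off the whole multiset---legitimate since $X$ is commutative---gives \eqref{eq:m-rise-contact-partitions}. The only care needed is the index bookkeeping: the $nm$ entries of $\mcontactsV$ (with the block for $i=n$ stopping at $j=m-1$) correspond bijectively to positions $0,\dots,nm-1$ of the ordinary contact vector, which is precisely what the $\expand$/$\contract$ label formulas require.
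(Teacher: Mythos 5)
Your proposal is correct and takes essentially the same route as the paper's proof: expand the $m$-interval-poset to a rise-contact-$m$-divisible grafting tree, apply $\risecontact$ while tracking the statistics through Propositions~\ref{prop:m-statistics-interval-posets}, \ref{prop:m-exp-cont} and~\ref{prop:m-exp-distance}, then contract, with the distance constant $nm(m-1)/2$ cancelling exactly as the paper observes. Your explicit verification that $\risecontact$ preserves rise-contact-$m$-divisibility (arguing in both directions via the involution) is a welcome sharpening of what the paper states tersely when it claims $(T',\ell')$ is still rise-contact-$m$-divisible; just note that your phrase ``reading off the initial entry'' must be understood as invoking \eqref{eq:rise-contact-rises} of Theorem~\ref{thm:rise-contact-statistics} rather than the multiset permutation alone, since a permutation by itself does not pin down the first entry.
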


\begin{proof}
Le $I$ be an interval of $\Tamnm$ with $\tilde{I}$ its corresponding $m$-interval-poset in $\Tamntm$ and let $(T,\ell) = \expand(\tilde{I})$ be the expansion of its $m$-grafting-tree. By Propositions~\ref{prop:m-statistics-interval-posets} and~\ref{prop:m-exp-cont}, we have
\begin{align}
\label{eq:m-rise-contact-proof1}
\contacts(T,\ell) &= m \contacts(\tilde{I}) = m (\mcontacts(I)) \\
\label{eq:m-rise-contact-proof2}
\contactsStep{im}(T, \ell) &= m \contactsStep{im}(\tilde{I}) = m (\mcontactsStep{i,m}(I)) \\
\label{eq:m-rise-contact-proof3}
\contactsStep{(i-1)m + j}(T, \ell) &= m (\contactsStep{(i-1)m +j}(\tilde{I}) - 1) = m (\mcontactsStep{i,j}(I))
\end{align}
for $1 \leq i \leq n$ and $1 \leq j < m$. And using again Propositions~\ref{prop:m-statistics-interval-posets} and the fact that the expansion does not affect the initial forest, we have
\begin{align}
\label{eq:m-rise-contact-proof4}
\risesStep{i}(T,\ell) &= \risesStep{i}(\tilde{I}) = m (\mrisesStep{i}(I))
\end{align}
for $0 \leq i \leq mn$. In other words, $(T,\ell)$ is rise-contact-$m$-divisible.  Let $(T', \ell') = \risecontact(T, \ell)$. By Theorem~\ref{thm:rise-contact-statistics}, we have that

\begin{align}
\label{eq:m-rise-contact-proof5}
\rises(T,\ell) &= \contacts(T',\ell'); \\
\label{eq:m-rise-contact-proof6}
\risesP(T,\ell,X) &= \contactsP(T',\ell',X); \\
\label{eq:m-rise-contact-proof7}
\distance(T,\ell) &= \distance(T',\ell').
\end{align}

In particular, this means that $(T',\ell')$ is still rise-contact-$m$-divisible: we can apply the $\contract$ operation and we get an $m$-interval-poset  $\tilde{J}$ of $\Tamntm$, which corresponds to some interval $J$ of $\Tamnm$. This proves that $\mrisecontact$ is well defined and is an involution by construction. Using \eqref{eq:m-rise-contact-proof4} followed by \eqref{eq:m-rise-contact-proof5} then by \eqref{eq:m-rise-contact-proof1} on $(T',\ell')$, $J$ and $\tilde{J}$, we obtain \eqref{eq:m-rise-contact-rises}. 

The result \eqref{eq:m-rise-contact-partitions} follows in a similar way. The equality \eqref{eq:m-rise-contact-proof4} tells us that the rise vector of $(T,\ell)$ is the rise vector of $I$ where every value has been multiplied by $m$. Now \eqref{eq:m-rise-contact-proof6} basically says that the contact vector of $(T',\ell')$ is a permutation of the rise vector of $(T,\ell)$. Finally, we apply \eqref{eq:m-rise-contact-proof1}, \eqref{eq:m-rise-contact-proof2}, and \eqref{eq:m-rise-contact-proof3} on $(T',\ell')$, $\tilde{J}$ and $J$ instead of $(T,\ell)$, $\tilde{I}$ and $I$, and we get the equality \eqref{eq:m-rise-contact-partitions} between the rise and contact partitions.

For \eqref{eq:m-rise-contact-distance}, see in \eqref{eq:m-rise-contact-proof7} that the distance statistic is not affected by $\risecontact$. Proposition~\ref{prop:m-exp-distance} tells us that $\expand$ applies an affine transformation which does not depend on the shape of $T$, it is then reverted by the application of $\contract$ later on.
\end{proof}

Figure~\ref{fig:m-big-example} shows a complete example of the $\mrisecontact$ involution on an interval of $\Tam{11}{2}$. You can run more examples and compute tables for all intervals using the provided live {\tt Sage-Jupyter} notebook~\cite{JNotebook}.

\bibliographystyle{alpha}
\bibliography{paper}

\begin{landscape}
\begin{figure}[p]
\setlength\tabcolsep{-1pt}
\begin{tabular}{cccc}
$\begin{aligned}\scalebox{.3}{\input{figures/mpaths/P11-interval}}\end{aligned}$ 
&
\scalebox{.8}{
\begin{tabular}{l}
$\mcontacts(B_1) = 5$ \\
$\mcontactsP(B_1, X) =  x_0^{16} x_1^4 x_2 x_5$ \\
$\mrises(B_2) = 1$ \\
$\mrisesP(B_2,Y) = y_0^{14} y_1^5 y_2^3$ \\
$\distance(I) = 7$
\end{tabular}
}
&
$\begin{aligned}\scalebox{.5}{\input{figures/interval-posets/I22-ex1}}\end{aligned}$ &
$\begin{aligned}\scalebox{.4}{\input{figures/m-grafting-tree}}\end{aligned}$ \\
\multicolumn{2}{l}{\scalebox{.8}{$\mcontactsV(B_1) = \left[5, 0, 0, 1, 0, 0, 0, 0, 0, 2, 1, 0, 0, 0, 0, 1, 0, 0, 0, 1, 0, 0 \right]$}} \\
\multicolumn{2}{l}{\scalebox{.8}{$\mrisesV(B_2) = \left[1, 0, 2, 0, 0, 1, 0, 2, 0, 1, 0, 0, 1, 1, 0, 0, 2, 0, 0, 0, 0, 0\right]$}} \\
\multicolumn{2}{c}{$I = [B_1, B_2]$} 
\end{tabular}

\vspace{-0.5cm}

\begin{tabular}{ccccc}
$\xrightarrow{\expand}$
$\begin{aligned}\scalebox{.4}{\input{figures/grafting-tree-expanded}}\end{aligned}$ & $\xrightarrow{\risecontact}$ & 
$\begin{aligned}\scalebox{.4}{\input{figures/grafting-tree-expanded2}}\end{aligned}$ & $\xrightarrow{\contract}$
\end{tabular}

\vspace{-0.5cm}

\begin{tabular}{ccc}
\begin{tabular}{cc}
$\begin{aligned}\scalebox{.3}{\input{figures/mpaths/P11-interval-2}}\end{aligned}$ 
&
\scalebox{.8}{
\begin{tabular}{l}
$\mcontacts(C_1) = 1$ \\
$\mcontactsP(C_1, X) =  x_0^{14} x_1^5 x_2^3$ \\
$\mrises(C_2) = 5$ \\
$\mrisesP(C_2,Y) = y_0^{16} y_1^4 y_2 y_5$ \\
$\distance(J) = 7$
\end{tabular}
} \\
\multicolumn{2}{l}{\scalebox{.8}{$\mcontactsV(C_1) = \left[1, 0, 2, 0, 0, 1, 0, 2, 0, 1, 0, 1, 0, 1, 2, 0, 0, 0, 0, 0, 0, 0 \right]$}}\\
\multicolumn{2}{l}{\scalebox{.8}{$\mrisesV(C_2) = \left[5, 1, 2, 1, 0, 0, 0, 1, 0, 0, 0, 1, 0, 0, 0, 0, 0, 0, 0, 0, 0, 0\right]$}}\\
\multicolumn{2}{c}{$J = \mrisecontact(I) = [C_1, C_2]$}
\end{tabular}
&
$\begin{aligned}\scalebox{.5}{\input{figures/interval-posets/I22-ex2}}\end{aligned}$ &
$\begin{aligned}\scalebox{.4}{\input{figures/m-grafting-tree2}}\end{aligned}$ \\
\end{tabular}
\caption{The $m$-rise-contact involution on an example}
\label{fig:m-big-example}
\end{figure}
\end{landscape}

\end{document}